\numberwithin{equation}{section}
\numberwithin{figure}{section}
\theoremstyle{plain}
\newtheorem{thm}{Theorem}[section]
  \theoremstyle{definition}
  \newtheorem{defn}[thm]{Definition}
\theoremstyle{remark}
  \newtheorem{rem}[thm]{Remark}
  \theoremstyle{plain}
  \newtheorem{prop}[thm]{Proposition}
 \theoremstyle{definition}
  \newtheorem{example}[thm]{Example}
  \theoremstyle{plain}
  \newtheorem{cor}[thm]{Corollary}
\newtheorem{lem}[thm]{Lemma}
\def\quot{/\!\!/}
\renewcommand{\hom}{\mathsf{Hom}}
\newcommand{\fun}{\mathsf{Fun}}
\newcommand{\C}{\mathbb{C}}
\newcommand{\X}{\mathfrak{X}}
\newcommand{\R}{\mathbb{R}}
\newcommand{\tr}{\mathrm{tr}}
\newcommand{\GL}{\mathsf{GL}}
\newcommand{\SL}{\mathsf{SL}}
\newcommand{\SU}{\mathsf{SU}}
\newcommand{\rep}{\mathcal{R}}
\newcommand{\moduli}{\mathcal{M}}
\newcommand{\fnc}{\mathcal{F}}
\newcommand{\gauge}{\mathcal{G}}
\newcommand{\V}{\mathcal{V}}
\newcommand{\ab}{\mathbf{a}}
\newcommand{\bb}{\mathbf{b}}
\newcommand{\cb}{\mathbf{c}}
\newcommand{\db}{\mathbf{d}}
\newcommand{\eb}{\mathbf{e}}
\newcommand{\fb}{\mathbf{f}}
\newcommand{\hb}{\mathbf{h}}
\newcommand{\gb}{\mathbf{g}}
\newcommand{\id}{I}
\newcommand{\norm}[1]{|\!|#1 |\!|}
\author[C. Florentino]{Carlos Florentino}
\address{Departamento Matem\'atica, Instituto Superior T\'ecnico, Av. Rovisco
Pais, 1049-001 Lisbon, Portugal}
\email{cfloren@math.ist.utl.pt}
\author[S. Lawton]{Sean Lawton}
\address{Department of Mathematics, The University of Texas-Pan American,
1201 West University Drive Edinburg, TX 78539, USA}
\email{lawtonsd@utpa.edu}
\subjclass[2010]{Primary 16G20, 14L30; Secondary 14P25, 14L17}
\keywords{quiver, representation, moduli space, deformation retraction}
\begin{document}

\title[Character Varieties and Quiver Representations]{Character Varieties and Moduli of Quiver Representations}

\begin{abstract}
Let $G$ be a Lie group and $Q$ a quiver with relations.  In this paper, we define $G$-valued representations of $Q$ which directly generalize $G$-valued representations of finitely generated groups.  Although as $G$-spaces, the $G$-valued quiver representations are more general than $G$-valued representations of finitely generated groups, we show by collapsing arrows that their quotient spaces are equivalent.  We then establish a general criterion for the moduli of $G$-valued quiver representations with relations to admit a strong deformation retraction to a compact quotient by pinching vertices on the quiver.  This provides two different generalizations of main results in \cite{FlLa}.  Lastly, we establish quiver theoretic conditions for the moduli spaces of $\mathsf{GL}(n,\C)$ and $\mathsf{SL}(n,\C)$-valued quiver representations to embed into traditional moduli spaces of quiver representations having constant dimension vector. 
\end{abstract}

\maketitle

\section{Introduction and Motivation}

The study of moduli spaces of representations of finitely generated groups, also known as character varieties, has received intense attention over the last 40 years.  Influential articles include \cite{HN}, \cite{DR}, \cite{AB}, \cite{Hi}, and \cite{G10}.  Some recent advances directly concerning the topology of these moduli spaces include \cite{BGG}, \cite{Holi1},\cite{HoLi2}, \cite{FlLa}, \cite{Ba0}, and \cite{DWW}.  

Let $\Gamma$ be a finitely presented group, and let $G$ be a Lie group.  For our considerations, $G$ will be a compact Lie group $K$ or its complexification $K_\C$ (a complex reductive Lie group). The group of inner automorphisms of $G$ acts on the space of homomorphisms $\hom(\Gamma,G)$.  With respect to this action we consider the GIT quotient $\X_{\Gamma}(K_\C)=\hom(\Gamma,K_\C)\quot K_\C$ and the orbit space $\X_\Gamma(K)=\hom(\Gamma,K)/K$.  In either case, these moduli spaces are referred to as character varieties and, loosely speaking, parametrize flat principal bundles over a topological space $M$ where $\Gamma=\pi_1(M)$.

The topology of moduli spaces of quiver representations has also been recently considered (see \cite{Hau1,Hau2}). For additional work concerning moduli spaces of quiver representations see \cite{Rei,Rei2,LeB,Kin,Cr,CBP}.  A quiver $Q$ is a finite directed graph.  To obtain a (additive) quiver representation associate to each vertex a finite dimensional vector space and to each arrow a linear transformation.  Denote the vertex set of $Q$ by $Q_V$, and let $d = (d_v | v\in Q_V) \in \mathbb{N}^{Q_V}$ be a fixed dimension vector.  Denote the set of arrows by $Q_A$ and each arrow $a$ by $v\to w$ where $v,w\in Q_V$ (with head $h_a=w$ and tail $t_a=v$).  For each such dimension vector fix $\C$-vector spaces $W_v$ of dimension $d_v$ for all $v \in Q_V$. Consider the affine $\C$-space $$\mathcal{R}_d(Q) = \bigoplus_{v\to w\in Q_A}\hom_{\C}(W_{v},W_{w}).$$  The reductive linear algebraic group $$G_d = \prod_{v \in Q_V}\mathsf{GL}(W_v)$$ acts on $\mathcal{R}_d(Q)$ via:  $$(...,g_v,...) \cdot (...,T_{v\to w},...) = (...,
g_wT_{v\to w}g_v^{-1},...).$$  The moduli space of quiver representations is then the GIT quotient of $\mathcal{R}_d$ by $G_d$; namely, $\mathcal{M}_d=\mathcal{R}_d\quot G_d$.

It is the purpose of this paper to define $G$-valued quiver representations (with relations), and discuss the topology of the moduli space of these objects.  The 
$G$-valued quiver representations simultaneously generalize the moduli spaces of principal $G$-bundles (character varieties) and relate to the additive quiver 
representations just discussed.  Indeed, they can be viewed as the equidimensional case of a multiplicative theory of quiver representations.

Define $G$-valued representations of $Q$, denoted by $\fun(Q_A,G)$, to be the collection of set mappings $Q_A\to G$, and define $\gauge_G(Q)$ to be the set mappings $Q_V\to G$.  Then $\gauge_G(Q)$ acts on $\fun(Q_A,G)$ by $g\cdot f(a)=g(h_a)f(a)g(t_a)^{-1}$.  The moduli space of these objects is 
then a categorical quotient via this action:  the GIT quotient if $G$ is complex reductive, or the orbit space if $G$ is compact.  Either way, the quotient is a semi-algebraic set and so naturally is a subset of an affine space.  We consider the subspace topology from such an affine embedding (up to homeomorphism, it is independent of the embedding).

Our first main theorem (Theorem \ref{charvarquiverequal}) shows that all moduli
spaces of $G$-valued quiver representations are in fact isomorphic as varieties
(although not equal as $G$-spaces) to character varieties.  This result 
relies on an operation on quivers; namely, collapsing arrows.

Our second main theorem (Theorem \ref{pinchtheorem}) implies that for any free product of groups $\Gamma=\Gamma_1*\cdots*\Gamma_m$ if $\hom(\Gamma_i,K_\C)$ $K$-equivariantly strongly deformation retracts to $\hom(\Gamma_i,K)$ then the $K$-character variety of $\Gamma$ is a strong deformation retract of the $K_\C$-character variety of $\Gamma$, where $K_\C$ is the complexification of a compact Lie group $K$.  Theorem \ref{pinchtheorem} also implies that the moduli space of $K_\C$-representations of any quiver, denoted by $\mathcal{M}_{K_\C}$, strongly deformation retracts to the corresponding moduli space $\mathcal{M}_K$ (Theorem \ref{defretractthm}).  In fact, when certain relation types are imposed on the quiver representations the result still holds.  These results directly generalize the work in \cite{FlLa} and rely on another operation on quivers; namely, pinching vertices.  Preceding these theorems however, we discuss some general theory of $K_\C$-spaces and how they related to $K$-spaces.  In 
particular, we establish general criteria for such spaces to be related as cellular complexes and also general criteria for such spaces to be homotopy equivalent.

In the last section, we establish necessary quiver theoretic conditions for the moduli spaces of $G$-representations of quivers and the usual (additive) moduli spaces of quiver representations to correspond. In particular, let $Q$ be a quiver with no ends (sources or sinks) with Betti number $b_{1}(Q)=r$, and suppose a complex reductive Lie group $G$ is a subgroup of $\SL(n,\C)$ (which is always possible). Then our third main theorem (Theorem \ref{embedcharvarintovectquiv}) shows that the $G$-character variety of a $r$-generated group is naturally a subvariety of the usual moduli of quiver representations with fixed dimension vector $n$.  Also, when $G=\GL(n,\C)$, we show that the analogous embedding has dense image.  

Lastly, we show how some of affine toric geometry fits inside this framework, when $G=\mathbb{C}^{*}$ and the action is altered by including weights.

\section{Moduli of $G$-valued Quiver Representations}
\subsection{Quivers and Representations}
As in archery, a quiver is a collection of arrows.  We make this precise with the following definition.

\begin{defn}
A \emph{quiver} $Q$ is a finite directed graph. In other words, $Q$ is
formed by a finite set of \emph{vertices} $Q_V$, and a finite set of \emph{arrows}
$Q_A$ between vertices. For each arrow $a\in Q_A$, denote
by $h_{a},t_{a}\in Q_V$ its \emph{head} and \emph{tail} vertices.  Also let $N_A=\#Q_A$ and $N_V=\#Q_V$ be the 
cardinality of $Q_A$ and $Q_V$ respectively.
\end{defn}

A quiver is connected if it is path-connected as a 1-complex, i.e. if each edge corresponds to a 1-cell and each vertex a 0-cell and the 
graph theoretic definition defines the gluing maps.  We will mostly consider connected quivers as the general case does not
present many new features.  

Let $G$ be a group.

\begin{defn}
A $G$-\emph{valued representation} (or $G$-\emph{marking}) of $Q$ is a set mapping $f:Q_A\to G$.
\end{defn}

The $G$-valued representations form a set which is denoted by $\mathcal{F}_G(Q):=\fun(Q_A,G)$ or just
by $\mathcal{F}_G$ when the quiver is understood.  

Note that there is a canonical identification by the evaluation map: $$
\mathcal{F}_G(Q)\cong G^{N_A}.$$

With respect to the vertices we similarly define the group $\gauge_G(Q):=\fun(Q_V,G)\cong G^{N_V}.$
Here, the group law is given by component-wise multiplication: $g_1g_2(v)=g_1(v)g_2(v)$, for all $v\in Q_V$, and $g_1,g_2\in \gauge_G(Q)$. 
This follows since the evaluation map $\mathrm{ev}:\fun(Q_V,G)\to G^{N_V}$ is a group homomorphism.

The group $\mathcal{G}_G$, here after called the {\it gauge group} of $Q$, acts naturally on the set $\mathcal{F}_G$.
This action is defined by the following rule:\begin{equation}
\left(g\cdot f\right)(a):=g(h_{a})f(a)g(t_{a})^{-1},\quad\quad g\in\mathcal{G}_G,\ f\in\mathcal{F}_G.\label{eq:action}\end{equation}

The action is well defined since:
\begin{enumerate}
\item Clearly $\left(g\cdot f\right)(a)=g(h_{a})f(a)g(t_{a})^{-1}$ results in a new function in $\mathcal{F}_G$.
\item The identity in $\mathcal{G}_G$ is $\mathbf{I}=(I,...,I)\in G^{N_V}$ where $I$ is the identity in $G$.  So 
$\left(\mathbf{I}\cdot f\right)(a)=If(a)I^{-1}=f(a)$.
\item Lastly, \begin{eqnarray*}\left((g_1g_2)\cdot f\right)(a)&:=&(g_1g_2)(h_{a})f(a)(g_1g_2)(t_{a})^{-1}\\
                                                              &=&g_1(h_a)g_2(h_a)f(a)(g_1(t_a)g_2(t_a))^{-1}\\
                                                              &=&g_1(h_a)\left(g_2(h_a)f(a)g_2(t_a)^{-1}\right)g_1(t_a)^{-1}\\
                                                              &=&g_1\cdot\left(g_2\cdot f\right)(a).
              \end{eqnarray*}

\end{enumerate}

When $G$ is an algebraic group, the set $\mathcal{F}_G(Q)$ forms an affine algebraic variety which will be called the $G$-representation
variety of $Q$.  In this case, the action is algebraic since in each factor it arises from left multiplication of an algebraic group and right
multiplication composed with inversion in an algebraic group; all of which are given by polynomial mappings.

In order to consider moduli of quiver representations, let us first recall the theory of affine quotients by complex reductive groups.

\subsection{Complex affine reductive groups}

An algebraic group is a group that is an algebraic variety (zero set of a finite number of polynomials) such that the group operations are all regular (polynomial) mappings.  A complex affine group is an algebraic group that is the complex points of an affine variety.  Any affine group has a faithful linear representation (see \cite{Do} for instance), thus it is a closed subgroup of a general linear group and hence a linear Lie group.   Lie groups are smooth, and irreducible complex varieties are connected (see for instance \cite{Sh2} page 321).

Let $K$ be a compact Lie group.  Then $K$ is a real algebraic group which embeds in $\mathrm{O}(n,\mathbb{R})$ for some $n$.  Since $K$ is algebraic there is an ideal $\mathfrak{I}$ in the real coordinate ring $\mathbb{R}[\mathrm{O}(n,\mathbb{R})]$ defining its points.  Let $G=K_{\C}$ be the complex zeros of $\mathfrak{I}$, called the \emph{complexification} of $K$.  Then $G$ is a complex affine subgroup of $\mathrm{O}(n,\C)$ with coordinate ring $\C[G]=\mathbb{R}[K]\otimes_{\mathbb{R}}\C$.  Any complex affine group $G$ which arises in this fashion is called {\it reductive}.  The ``unitary trick'' shows $\SL(n,\C)$ is reductive.  We note that this definition, although not the most general, coincides with more general notions of reductivity when the algebraic group is complex linear.  In particular, another equivalent definition is that a complex linear algebraic group $G$ is reductive if for every finite dimensional representation of $G$ all subrepresentations have invariant complements.  The important 
observation is that such groups act like and have the algebraic structure of compact groups.  See \cite{Sch1}.

For example, $\mathrm{U}(n)=\{M\in\GL(n,\C)\ | \ M\overline{M}^{\mathsf{t}}=\id\}$, where $\id$ is the $n\times n$ identity matrix and $M^\mathsf{t}$ is the transpose of $M$.  Writing $M=A+\sqrt{-1}B$, we have that $\mathrm{U}(n)\cong$ $$\left\{\left(\begin{array}{cc}A&B\\-B&A\end{array}\right)\in \mathrm{GL}(2n,\mathbb{R})\ \ |\ A^\mathsf{t} A+B^\mathsf{t} B=I\ \&\ A^\mathsf{t} B- B^\mathsf{t} A=0\right\},$$  which sits isomorphically in $\mathrm{GL}(2n,\C)$ as $$\left\{\left(\begin{array}{cc}k&0\\0&(k^{-1})^\mathsf{t}\end{array}\right)\in \mathrm{GL}(2n,\C)\ |\ k\in \mathrm{U}(n)\right\}.$$  Letting $k$ be arbitrary in $\GL(n,\C)$ realizes the complexification $\mathrm{U}(n)_{\C}=\GL(n,\C)$.  In this way $\mathrm{U}(n)$ becomes the real locus of the complex variety $\GL(n,\C)$. Similarly, $\mathrm{SU}(n)_{\C}=\SL(n,\C)$.

\begin{rem}
We have not assumed $K$ is connected. Any compact Lie group $K$ has a finite number of connected components, 
all homeomorphic to the component containing the identity.   
As an algebraic variety, $\C[K_{\C}]$ has irreducible algebraic components (with respect to the Zariski topology).  
However, in this case the irreducible algebraic components are all disjoint homeomorphic topological components 
(with respect to the usual ball topology on $K_{\C}$), and each arises by complexifying a component of $K$ (see \cite{Bo} page 87). 
\end{rem}

\subsection{Algebraic Quotients}
A theorem of Nagata \cite{Na} says that if a reductive group acts on a finitely generated algebra $A$, then the subalgebra of invariants $A^G=\{a\in A\ |\ g\cdot a=a\}$ is likewise finitely generated.  This is one answer to Hilbert's fourteenth problem.

\begin{defn}
A {\it categorical quotient} of a variety $V_G$ with an algebraic group $G$ acting rationally is an object $V_G\quot G$ and a $G$-invariant morphism $\pi_G:V_G\to V_G\quot G$ such that the following commutative diagram exists 
uniquely for all invariant morphisms $f:V_G\to Z$: $$ \xymatrix{
V_G \ar[rr]^-{\pi} \ar[dr]_{f} & & V_G\quot G \ar@{.>}[dl]\\
& Z &} $$
It is a {\it good} categorical quotient if the following additionally hold:
\begin{enumerate}
\item[(i)] for open subsets $U\subset V_G\quot G$, $\C[U]\cong \C[\pi^{-1}(U)]^{G}$
\item[(ii)] $\pi$ maps closed invariant sets to closed sets
\item[(iii)] $\pi$ separates closed invariant sets.
\end{enumerate}
\end{defn}

When $G$ is reductive and $V_G$ is an affine $G$-variety, then $$V_G\to \mathrm{Spec}_{max}(\C[V_G]^G)$$ is a good categorical quotient.  See \cite{Do} for details.

It can be shown that the categorical quotient in the category of affine varieties (over $\C$ and with respect to a reductive group action) is also the categorical quotient for Hausdorff spaces or complex analytic varieties \cite{Lu2,Lu3}). 

Any such reductive quotient has an affine lift (see \cite{MFK}).  In other words, there is an affine space $\mathbb{A}^N$ for some potentially large $N$ where $V_G\subset \mathbb{A}^N$ and where the action of $G$ extends.  Then $$\Pi:\C[\mathbb{A}^N]\longrightarrow \C[V_G]$$ and more importantly $$\Pi_{G}:\C[\mathbb{A}^N\quot G]\longrightarrow \C[V_G\quot G]$$ are surjective morphisms.

In the case where $G$ is a complex reductive Lie group $K_\C$ arising as the complexification of a compact Lie group $K$, our main objects of interest are \[
\mathcal{M}_{K_\C}:=\mathcal{F}_{K_\C}\quot\mathcal{G}_{K_\C}\quad\mbox{and }\quad\mathcal{M}_K:=\mathcal{F}_K/\mathcal{G}_K.\]
Here, the first quotient is the affine GIT quotient, as the action of $\mathcal{G}_{K_\C}$ on $\mathcal{F}_{K_\C}$ is algebraic, whereas the second is the usual orbit space.  In this latter case, all orbits are compact, and in both cases the moduli space is Hausdorff.  Both of these two spaces are examples of the following definition.

\begin{defn}
Let $G$ be a topological group, and let $\mathcal{F}_{G}(Q)/\mathcal{G}_{G}(Q)$ denoted the orbit space corresponding to the action \eqref{eq:action}.  Then the identification space $\mathcal{M}_{G}(Q):=(\mathcal{F}_{G}(Q)/\mathcal{G}_{G}(Q))/\!\!\sim$ is called the {\it moduli space of $G$-representations of $Q$} (or {\it $G$-markings on $Q$}), where two $\mathcal{G}_{G}(Q)$-orbits are defined to be equivalent if and only if they are members of a chain of orbits whose closures pair-wise intersect.
\end{defn}

\begin{rem}\label{charvarremark}
Let $\rep_G(Q):=\hom(F_A,G)$ be the set of group homomorphisms from a free group $F_A$ of rank $N_A$ 
(freely generated by arrows) into $G$.  The evaluation mapping identifies $\rep_G(Q)$ and $\fnc_G(Q)$.  

In the case of a quiver with a single vertex the
gauge group $\mathcal{G}_{G}$ reduces to a single copy of $G$,
and the action becomes conjugation on $\rep_{G}(Q)$. Therefore,
in this case, the moduli spaces of $G$-markings on $Q$ and $G$-character
varieties of free groups are naturally isomorphic.

So with respect to $(G,X)$-spaces, the collection of pairs of the form 
$\{\gauge_G(Q),\fun(Q_A,G)\}$ properly contains the collection of pairs of the form
$\{(G,\hom(F,G))\},$ where $F$ is a free group.
\end{rem}

\subsection{Quivers with relations}

Let us consider paths inside a quiver $Q$. 
\begin{defn}
A \emph{path }of length $k\geq0$ is a sequence of arrows $a_{k}\cdots a_{1}$
such that the head of $a_{j}$ is the same vertex as the tail of $a_{j+1}$
for all $j=1,...,k-1$. Note that we are writing a path from right to
left. This way of writing is justified by Proposition \ref{pro:action-preserved}
below. A path of length one is a single arrow. We are including the
paths of length zero, in natural bijection with the vertices in $Q$.
We define the \emph{head} and \emph{tail} of a path $p=a_{k}\cdots a_{1}$
in the natural way: $t_{p}=t_{a_{1}}$, $h_{p}=h_{a_{k}}$. 
\end{defn}

\begin{figure}[h]
\begin{picture}(0,0)%
\includegraphics{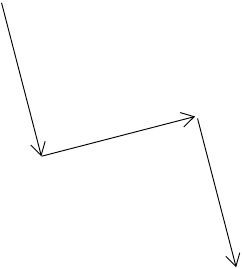}%
\end{picture}%
\setlength{\unitlength}{3947sp}%
\begingroup\makeatletter\ifx\SetFigFont\undefined%
\gdef\SetFigFont#1#2#3#4#5{%
  \reset@font\fontsize{#1}{#2pt}%
  \fontfamily{#3}\fontseries{#4}\fontshape{#5}%
  \selectfont}%
\fi\endgroup%
\begin{picture}(1930,2140)(4778,-2940)
\put(4974,-1175){\makebox(0,0)[lb]{\smash{{\SetFigFont{7}{8.4}{\familydefault}{\mddefault}{\updefault}{\color[rgb]{0,0,0}$a_1$}%
}}}}
\put(5565,-2085){\makebox(0,0)[lb]{\smash{{\SetFigFont{7}{8.4}{\familydefault}{\mddefault}{\updefault}{\color[rgb]{0,0,0}$a_2$}%
}}}}
\put(6658,-2221){\makebox(0,0)[lb]{\smash{{\SetFigFont{7}{8.4}{\familydefault}{\mddefault}{\updefault}{\color[rgb]{0,0,0}$a_3$}%
}}}}
\end{picture}%

\caption{Example Path $a_3a_2a_1$}
\end{figure}

\begin{defn}
A \emph{quiver with relations} is a pair $(Q,R)$ where $Q$ is a
quiver and $R$ is a finite set of relations. A relation in a quiver
$Q$ is a (oriented) \emph{cycle} in $Q$, that is, a path in $Q$ with
the same head and tail. If $G$ is an arbitrary group, a representation
of $(Q,R)$ into $G$ is a function $f:Q_{A}\to G$
that satisfies all relations in the following way: for any cycle $p=a_{k}\cdots a_{1}\in R$,
$f(p):=f(a_{k})\cdots f(a_{1})=I$, where $I$ is the identity in $G$. 

\end{defn}

When $G$ is an algebraic group then
\begin{align*}
\mathcal{F}_G(Q,R) & \subset\mathcal{F}_G(Q),\end{align*}
is an algebraic subset and thus is a closed subset of $\mathcal{F}_G(Q)\cong G^{N_A}$.
\begin{prop}
\label{pro:action-preserved}The gauge group action of $\gauge_G(Q)$
on $\mathcal{F}_G(Q)$ preserves the subset $\mathcal{F}_G(Q,R)$.\end{prop}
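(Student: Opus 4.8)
The plan is to show that if $f \in \mathcal{F}_G(Q,R)$ and $g \in \gauge_G(Q)$, then $g \cdot f$ again satisfies every relation in $R$. Fix a cycle $p = a_k \cdots a_1 \in R$, so by definition $f(p) = f(a_k) \cdots f(a_1) = I$. I must compute $(g\cdot f)(p) := (g\cdot f)(a_k) \cdots (g\cdot f)(a_1)$ and verify it equals $I$ as well. The essential observation is that the gauge action inserts conjugating factors $g(h_a)$ and $g(t_a)^{-1}$ at the head and tail of each arrow, and along a composable path these telescope.

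The key step is the telescoping computation. Using the definition of the action in \eqref{eq:action}, each factor becomes $(g\cdot f)(a_j) = g(h_{a_j}) f(a_j) g(t_{a_j})^{-1}$. Because $p$ is a path, the arrows are composable: the head of $a_j$ coincides with the tail of $a_{j+1}$, i.e. $h_{a_j} = t_{a_{j+1}}$ for $j = 1, \ldots, k-1$. Writing out the product
\begin{equation*}
(g\cdot f)(p) = \Big(g(h_{a_k}) f(a_k) g(t_{a_k})^{-1}\Big)\cdots \Big(g(h_{a_1}) f(a_1) g(t_{a_1})^{-1}\Big),
\end{equation*}
I observe that each interior pair $g(t_{a_{j+1}})^{-1} g(h_{a_j}) = g(h_{a_j})^{-1} g(h_{a_j}) = I$ cancels. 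After all interior cancellations, the product collapses to $g(h_{a_k}) \big(f(a_k)\cdots f(a_1)\big) g(t_{a_1})^{-1} = g(h_p)\, f(p)\, g(t_p)^{-1}$. This exhibits exactly why paths are written right to left, as promised in the definition: the ordering is chosen so that composition of functions matches the geometric concatenation of arrows and the conjugating factors line up for cancellation.

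To finish, I use that $p$ is a cycle, so $h_p = t_p$, hence $g(h_p) = g(t_p)$; combined with $f(p) = I$ this gives
\begin{equation*}
(g\cdot f)(p) = g(h_p)\, I\, g(h_p)^{-1} = I.
\end{equation*}
Since $p \in R$ was arbitrary, $g\cdot f$ satisfies all relations, so $g \cdot f \in \mathcal{F}_G(Q,R)$, proving the subset is preserved.

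I do not anticipate a genuine obstacle here, as the argument is a direct telescoping computation valid for any group $G$. The one point requiring care is bookkeeping the order of multiplication and the head/tail indices so that the interior factors cancel correctly; getting the convention consistent with the right-to-left path notation is the only place an error could slip in. No reductivity or topological hypotheses on $G$ are needed, so the statement holds in the full generality of an arbitrary group.
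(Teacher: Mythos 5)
Your proof is correct and follows essentially the same route as the paper's: the same telescoping cancellation of the interior factors $g(t_{a_{j+1}})^{-1}g(h_{a_j})$ using composability of the arrows, followed by the cycle condition $h_p = t_p$ and $f(p)=I$ to conclude $(g\cdot f)(p)=I$. No gaps; the bookkeeping of indices and the right-to-left convention is handled correctly.
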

\begin{proof}

We now verify that the action makes sense for quivers with relations.  In particular, given a cycle 
$a=a_{k}a_{k-1}\cdots a_{1}$ in $Q$ we let
the values of the representation $f\in \mathcal{F}_G(Q,R)$ of this cycle be denoted $f(a_{j})=A_{j}\in G$.  
Then ${\displaystyle 
\prod_{j=0}^{k-1} A_{k-j}=I}$  
and acting on it by $g\in\mathcal{G}_G$ we obtain ${\displaystyle \prod_{j=0}^{k-1} g(h(a_{k-j}))A_{k-j}
g(t(a_{k-j}))^{-1}.}$ But 
since $a$ is a cycle, (1)
$h(a_{j})=t(a_{j+1})$ for $1\leq j \leq k-1$ and (2) $t(a_{1})=h(a_{k})$.  Therefore (1) implies that 
$$\prod_{j=0}^{k-1} g(h(a_{k-j}))A_{k-j}g(t(a_{k-j}))^{-1}=g(h(a_{k}))\prod_{j=0}^{k-1} A_{k-j}g(t(a_{1}))^{-1}=
g(h(a_{k}))g(t(a_{1}))^{-1}$$ 
and (2) then implies $\prod g(h(a_{j})A_{j}g(t(a_{j}))^{-1}=I$, which shows that the $\mathcal{G}_G$-action preserves 
$\mathcal{F}_G(Q,R)$.

\end{proof}
In this way, one sees that representations of quivers with relations
generalize representations of finitely presented groups (see Remark \ref{charactervarietywithrelationsremark}). Therefore,
one is led to the following definition. 
\begin{defn}
Given a quiver with relations $(Q,R)$ and a complex affine reductive
group $K_\C$ with a choice of maximal compact subgroup $K$, the moduli space of $K_\C$-representations of $(Q,R)$ is defined to be the GIT quotient
$\mathcal{M}_{K_\C}(Q,R)=\mathcal{F}_{K_C}(Q,R)\quot\mathcal{G}_{K_\C}(Q)$. Likewise, we define 
$\mathcal{M}_{K}(Q,R)=\mathcal{F}_{K}(Q,R)/\mathcal{G}_{K}(Q)$ as the usual orbit space.
\end{defn}

\begin{rem}\label{charactervarietywithrelationsremark}
A quiver with relations $(Q,R)$ such
that $Q$ has only one vertex corresponds naturally to a finitely
presented group $\Gamma$.  In the same fashion as in Remark \ref{charvarremark} we see 
that its representations in $G$ are in correspondence to the $G$-representation variety 
of $\Gamma$ and therefore the moduli space of $G$-representations of
$(Q,R)$ is, in this case, the same as the $G$-character variety
of $\Gamma$; namely, $\hom(\Gamma,G)\quot G$.  Conversely, for any finitely presented
$\Gamma$ there exists a 1-vertex quiver with relations so that $\hom(\Gamma,G)\quot G=
\fnc_G(Q,R)\quot \gauge_G$.
\end{rem}

\subsection{Examples}

In this section we compute examples of $G$-valued quiver moduli spaces.  All of the examples are elementary.

Let $Q$ be the one-arrow quiver, then the moduli space is $G/G^2$, and since $G^2$ acts transitively on $G$ there is 
only one orbit and so the moduli is a single point.

Generalizing the example of one arrow, we compute the moduli space of representations of a quiver with a tail 
(see Figure \ref{tailquiver}).

\begin{figure}[!!h]
\begin{picture}(0,0)%
\includegraphics{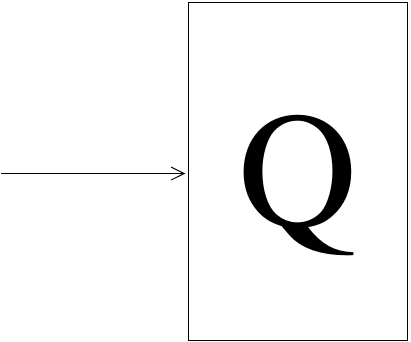}%
\end{picture}%
\setlength{\unitlength}{3947sp}%
\begingroup\makeatletter\ifx\SetFigFont\undefined%
\gdef\SetFigFont#1#2#3#4#5{%
  \reset@font\fontsize{#1}{#2pt}%
  \fontfamily{#3}\fontseries{#4}\fontshape{#5}%
  \selectfont}%
\fi\endgroup%
\begin{picture}(3271,2724)(7167,-3148)
\put(7556,-2077){\makebox(0,0)[lb]{\smash{{\SetFigFont{17}{20.4}{\familydefault}{\mddefault}{\updefault}{\color[rgb]{0,0,0}$a_0$}%
}}}}
\end{picture}%

\caption{A quiver with a tail.}\label{tailquiver}
\end{figure}

\begin{prop}
\label{prop:tailquiver} With respect to a quiver $Q$, let $\tilde{Q}$
be a quiver with a tail as in Figure $\ref{tailquiver}$. Then for
any group $G$, $\moduli_{G}(\tilde{Q})\cong\moduli_{G}(Q)$. \end{prop}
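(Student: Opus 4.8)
The plan is to read $\tilde Q$ as $Q$ together with one new vertex $v_0$ of valence one and one new arrow $a_0$ joining $v_0$ to a vertex $w\in Q_V$, and to exploit that the gauge factor sitting at $v_0$ acts \emph{only} on the $a_0$-coordinate. Since $v_0$ is incident to no arrow other than $a_0$, the group element $g(v_0)$ enters the action formula \eqref{eq:action} solely through $a_0$. This is the structural reason the extra arrow contributes nothing to the moduli: the free factor at $v_0$ can be used to gauge $f(a_0)$ to the identity, exactly as the two-vertex one-arrow quiver collapses to a point in the example preceding the proposition.

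Concretely, I would use the identifications $\fnc_G(\tilde Q)\cong\fnc_G(Q)\times G$ (the last factor recording $f(a_0)$) and $\gauge_G(\tilde Q)\cong\gauge_G(Q)\times G$ (the last factor recording $g(v_0)$), and introduce the projection $\pi\colon\fnc_G(\tilde Q)\to\fnc_G(Q)$ forgetting $a_0$ together with the section $\iota\colon\fnc_G(Q)\to\fnc_G(\tilde Q)$, $f_Q\mapsto(f_Q,I)$. When $G$ is algebraic both are morphisms of affine varieties. First I would check equivariance: for $g\in\gauge_G(\tilde Q)$ and any arrow $a$ of $Q$ one has $h_a,t_a\in Q_V$, so $(g\cdot f)(a)$ depends only on $\bar g:=g|_{Q_V}$; hence $\pi(g\cdot f)=\bar g\cdot\pi(f)$, and $\pi$ is equivariant over the surjection $\gauge_G(\tilde Q)\twoheadrightarrow\gauge_G(Q)$, while $\iota$ is equivariant over the inclusion $\gauge_G(Q)\hookrightarrow\gauge_G(\tilde Q)$ given by $g(v_0)=I$. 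Consequently both descend to morphisms $\bar\pi\colon\moduli_G(\tilde Q)\to\moduli_G(Q)$ and $\bar\iota\colon\moduli_G(Q)\to\moduli_G(\tilde Q)$, uniformly in the GIT and the orbit-space cases.

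The heart of the argument is to show $\bar\pi$ and $\bar\iota$ are mutually inverse. Since $\pi\circ\iota=\mathrm{id}$ on the nose, $\bar\pi\circ\bar\iota=\mathrm{id}$. For the other composite I would fix $f=(f_Q,A)\in\fnc_G(\tilde Q)$ and take $g\in\gauge_G(\tilde Q)$ with $g(v)=I$ for every $v\in Q_V$ and $g(v_0)=A$ (replace $A$ by $A^{-1}$ if instead $v_0=h_{a_0}$). Then $g$ fixes every $Q$-arrow and sends $f(a_0)=A$ to $I$, so $g\cdot f=\iota(\pi(f))$. Thus $f$ and $\iota(\pi(f))$ lie in a single $\gauge_G(\tilde Q)$-orbit and therefore have the same image in the quotient — a point of the orbit space being an orbit, and a point of the GIT quotient depending only on the orbit since $\pi_G$ is constant on orbits. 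Hence $\bar\iota\circ\bar\pi$ is the identity on points, and being a morphism it is the identity, yielding $\moduli_G(\tilde Q)\cong\moduli_G(Q)$.

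I do not expect a serious obstacle, as the argument mirrors the single-arrow computation, the essential point being that a valence-one vertex carries a free gauge factor trivializing its unique incident arrow. The only care needed is bookkeeping in the GIT setting: confirming that $\pi$ and $\iota$ are morphisms and that orbit-equality forces equality of images under $\pi_G$. No genuine invariant-theoretic subtlety intervenes, precisely because I produce an explicit gauge element realizing the orbit equivalence rather than appealing to orbit closures. Finally, note that $a_0$ can belong to no relation, since the pendant vertex $v_0$ lies on no cycle, so the statement and the argument are unaffected if relations are present.
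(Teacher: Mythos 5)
Your overall strategy is the paper's own: use the free gauge factor at the pendant vertex to normalize the marking of $a_0$ to the identity, and identify what remains with $\moduli_G(Q)$. Your explicit gauge element in the final step (trivial on $Q_V$, equal to $A$, respectively $A^{-1}$, at $v_0$) is correct, as is the equivariance of the projection $\pi$ over the restriction map $\gauge_G(\tilde Q)\twoheadrightarrow\gauge_G(Q)$. However, one intermediate claim is false: the section $\iota$ is \emph{not} equivariant over the embedding $\gauge_G(Q)\hookrightarrow\gauge_G(\tilde Q)$ given by $g(v_0)=I$. Write $w\in Q_V$ for the vertex to which the tail is attached, say $t_{a_0}=v_0$ and $h_{a_0}=w$. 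For $h\in\gauge_G(Q)$ extended by $\tilde h(v_0)=I$ one computes
\[
\tilde h\cdot\iota(f_Q)=\bigl(h\cdot f_Q,\ \tilde h(w)\, I\, \tilde h(v_0)^{-1}\bigr)=\bigl(h\cdot f_Q,\ h(w)\bigr)\neq\bigl(h\cdot f_Q,\ I\bigr)=\iota(h\cdot f_Q)
\]
whenever $h(w)\neq I$ (the same failure occurs, with $h(w)^{-1}$, for the other orientation of $a_0$). Since this equivariance was your stated reason that $\iota$ descends to $\bar\iota$, the proof as written has a gap at exactly that point: without a well-defined $\bar\iota$, the two-sided inverse argument cannot start.

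The gap is easily repaired, and the repair is precisely the paper's key observation. The section $\iota$ \emph{is} equivariant over the ``diagonal'' embedding $h\mapsto\tilde h$ with $\tilde h(v_0)=h(w)$: then the $a_0$-coordinate of $\tilde h\cdot\iota(f_Q)$ is $h(w)\,I\,h(w)^{-1}=I$, so $\iota(h\cdot f_Q)=\tilde h\cdot\iota(f_Q)$, hence the composition of $\iota$ with the quotient map $\fnc_G(\tilde Q)\to\moduli_G(\tilde Q)$ is $\gauge_G(Q)$-invariant and $\bar\iota$ is well defined (by the universal property in the GIT case, directly in the orbit-space case). This subgroup $\{\tilde h\ :\ \tilde h(v_0)=h(w)\}\cong\gauge_G(Q)$ is exactly what the paper identifies as the stabilizer of the normal form $\{I\}\times\fnc_G(Q)$, namely the elements $(h_1,h_1,\dots,h_{N_V})$. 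With this one correction, both composites $\bar\pi\circ\bar\iota=\mathrm{id}$ and $\bar\iota\circ\bar\pi=\mathrm{id}$ go through as you wrote them, and your argument becomes a repackaging of the paper's normal-form proof; your closing remark that $a_0$ lies on no cycle, so relations are unaffected, is a correct bonus not present in the paper's own proof of this proposition.
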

\begin{proof}
In this case the gauge group is $\gauge_{G}(\tilde{Q})=G\times\gauge_{G}(Q)$
and the $G$-quiver representations form $\fnc_{G}(\tilde{Q})=G\times\fnc_{G}(Q)$.

Let $F\in\fnc_{G}(Q)$ and $f=(f_{0},F)\in\fnc_{G}(\tilde{Q})$. Also
for any $h\in\gauge_{G}(Q)$ write $g=(g_{0},h)\in\gauge_{G}(\tilde{Q})=G\times\gauge_{G}(Q)$.
Let $N=N_{V}(Q)$ and label the vertices and arrows of $Q$ so that $t_{a_0}=v_0$ and $h_{a_0}=v_1$.  Then for any $h=(h_{1},...,h_{N})\in\gauge_{G}(Q)$,
\[
(h_{1}f_{0},h)\cdot f=((h_{1})f_{0}(h_{1}f_{0})^{-1},h\cdot F)=(I,h\cdot F).\]
 The elements that preserve this {}``normal form'' are parametrized
by $(h_{1},h_{1},...,h_{N_{V}})\in\gauge_{G}(\tilde{Q})$, and form
a subgroup isomorphic to $\gauge_{G}(Q)$.

Thus for any group $G$, $\moduli_{G}(\tilde{Q})$ is isomorphic to
the quotient of $\{I\}\times\fnc_{G}(Q)$ by $\gauge_{G}(Q)$ which
in turn is isomorphic to $\moduli_{G}(Q)$. The result follows. 
\end{proof}

By induction the above result implies that any arbitrarily long tail can be 
contracted on any quiver $Q$.  

In particular, we have
\begin{example}\label{prop:long-path}The moduli of $G$-representations of the long path 
(see Figure \ref{longpath}) is equivalent to the moduli space of a single arrow and thus both moduli spaces
are single points.  This coincides with the $G$-character variety of the trivial group $\hom(\langle 1\rangle,G)\quot G$.
\end{example}
There are many other similar examples, such as star-shaped quivers (the one point wedge of a union of long paths).

\begin{figure}[!!h]
\begin{picture}(0,0)%
\includegraphics{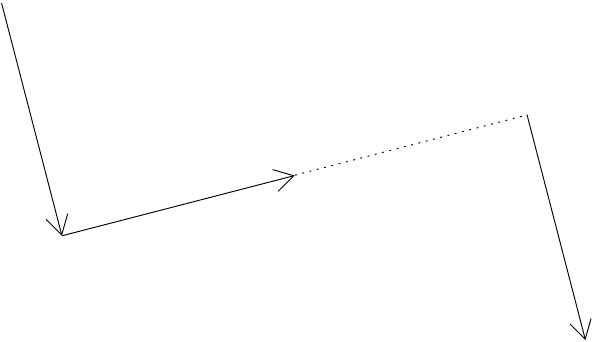}%
\end{picture}%
\setlength{\unitlength}{3947sp}%
\begingroup\makeatletter\ifx\SetFigFont\undefined%
\gdef\SetFigFont#1#2#3#4#5{%
  \reset@font\fontsize{#1}{#2pt}%
  \fontfamily{#3}\fontseries{#4}\fontshape{#5}%
  \selectfont}%
\fi\endgroup%
\begin{picture}(4740,2720)(7310,-2769)
\put(11868,-1964){\makebox(0,0)[lb]{\smash{{\SetFigFont{11}{13.2}{\familydefault}{\mddefault}{\updefault}{\color[rgb]{0,0,0}$a_m$}%
}}}}
\put(7601,-612){\makebox(0,0)[lb]{\smash{{\SetFigFont{11}{13.2}{\familydefault}{\mddefault}{\updefault}{\color[rgb]{0,0,0}$a_0$}%
}}}}
\put(8496,-1992){\makebox(0,0)[lb]{\smash{{\SetFigFont{11}{13.2}{\familydefault}{\mddefault}{\updefault}{\color[rgb]{0,0,0}$a_1$}%
}}}}
\end{picture}%

\caption{A long path Quiver}\label{longpath}
\end{figure}

A second example is the long loop quiver. Let $Q$ be the quiver in Figure 
\ref{longloop}.

\begin{figure}[!!h]
\begin{picture}(0,0)%
\includegraphics{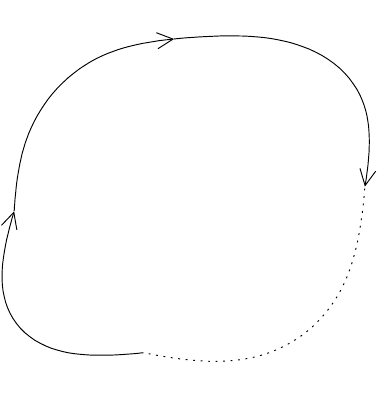}%
\end{picture}%
\setlength{\unitlength}{3947sp}%
\begingroup\makeatletter\ifx\SetFigFont\undefined%
\gdef\SetFigFont#1#2#3#4#5{%
  \reset@font\fontsize{#1}{#2pt}%
  \fontfamily{#3}\fontseries{#4}\fontshape{#5}%
  \selectfont}%
\fi\endgroup%
\begin{picture}(3018,3184)(3684,-4745)
\put(3749,-2102){\makebox(0,0)[lb]{\smash{{\SetFigFont{14}{16.8}{\familydefault}{\mddefault}{\updefault}{\color[rgb]{0,0,0}$a_0$}%
}}}}
\put(3847,-4662){\makebox(0,0)[lb]{\smash{{\SetFigFont{14}{16.8}{\familydefault}{\mddefault}{\updefault}{\color[rgb]{0,0,0}$a_m$}%
}}}}
\put(5521,-1758){\makebox(0,0)[lb]{\smash{{\SetFigFont{14}{16.8}{\familydefault}{\mddefault}{\updefault}{\color[rgb]{0,0,0}$a_1$}%
}}}}
\end{picture}%

\caption{A long loop Quiver}\label{longloop}
\end{figure}

\begin{example}
The moduli of $G$-representations of the long loop 
$($see Figure $\ref{longloop})$ is equivalent to the moduli space of a single loop and 
thus is a $G$-character variety of a rank 1 free group. 
\end{example}

\begin{proof}
In this case,  $\mathcal{R}_G=G^m$ and $\mathcal{G}_G=G^{m}$ as well.  Then for any $f=(f_1,...,f_m)\in\mathcal{R}_G$ we 
consider 
$g=(I,f_1^{-1},(f_2f_1)^{-1},....,(f_{m-1}\cdots f_1)^{-1})$ in  $\mathcal{G}_G$. Then $g\cdot f$
\begin{eqnarray*}
&=&(f_1^{-1}f_1I,(f_2f_1)^{-1}f_2f_1,...,(f_{m-1}\cdots f_1)^{-1}f_{m-1}(f_{m-2}\cdots f_1),If_m(f_{m-1}\cdots f_1))\\
&=&(I,...,I,f_m\cdots f_1).
\end{eqnarray*}  
Clearly every such element of this form corresponds to a quiver representation, and the set of elements in $\gauge_G$ 
that preserve this form is the 
diagonal $\Delta=\{(g,...,g)\in \gauge_G\ |\ g\in G\}\subset \gauge_G$.  The action of 
$\Delta$ on $\{I\}^{m-1}\times G$ is conjugation in each factor (trivial in the first $m-1$ components).

Thus the moduli of a long loop is the same as the moduli of a single loop 
(1 vertex and 1 arrow).  We conclude that $\moduli_G(Q)=G\quot G$.  

\end{proof}

As shown in \cite{FlLa}, when $K$ is simply connected, $G\quot G$ and $K/K$ are both 
contractible.  In particular, $\moduli_K(Q)$ is homeomorphic to a closed 
real ball; and when $G=\mathsf{SL}(n,\C)$ we have $\moduli_G(Q)=\C^{n-1}$ for any 
long loop $Q$.

Combining these two examples we conclude that the moduli space of any comet-shaped quiver
(the one point wedge of a long loop wedged with a union of
long paths) is isomorphic to the moduli of one loop (in this case a rank 1 free group 
character variety). 

We will see in the next section that the examples computed in this section are special cases of a general phenomena.

\section{Character Varieties and Collapsing Arrows}

The methods used in the last section to determine moduli spaces for some example families of quivers suggests the 
consideration of operations on quivers.  In this section quivers are not assumed connected, unless stated 
otherwise.  However, we do assume that each connected component has at least one arrow.

There are two basic operations on quivers:  identifying two vertices (called {\it pinching}) which changes the gauge group but preserves the 
representation space, and removing an edge (called {\it clipping}) which changes the representation space but preserves the gauge group.  Each has an 
inverse that will be called {\it cloning} and {\it bridging}, respectively.  Composing them (in either order) at the same arrow gives a map we call {\it collapsing}
(with inverse mapping called {\it expanding}).

We now explore the effect of these maps on the moduli spaces of $G$-valued quiver representations and show they give results in the theory of 
character varieties.

\begin{figure}[h!!]
\begin{picture}(0,0)%
\includegraphics{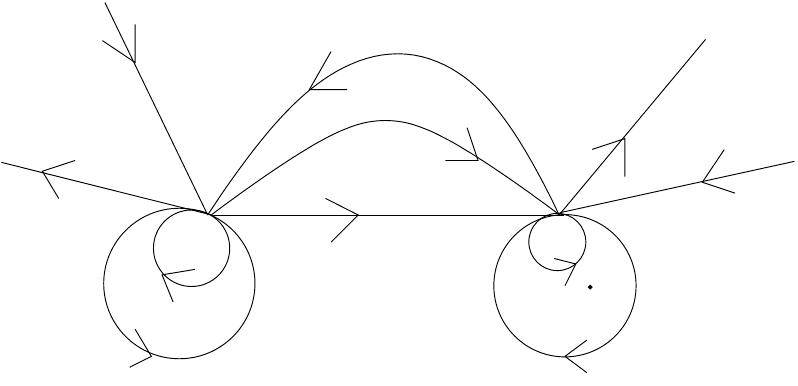}%
\end{picture}%
\setlength{\unitlength}{3947sp}%
\begingroup\makeatletter\ifx\SetFigFont\undefined%
\gdef\SetFigFont#1#2#3#4#5{%
  \reset@font\fontsize{#1}{#2pt}%
  \fontfamily{#3}\fontseries{#4}\fontshape{#5}%
  \selectfont}%
\fi\endgroup%
\begin{picture}(6367,3075)(4624,-2565)
\put(4704,-1244){\makebox(0,0)[lb]{\smash{{\SetFigFont{8}{9.6}{\familydefault}{\mddefault}{\updefault}{\color[rgb]{0,0,0}$\mathbf{a}_-$}%
}}}}
\put(5791,-156){\makebox(0,0)[lb]{\smash{{\SetFigFont{8}{9.6}{\familydefault}{\mddefault}{\updefault}{\color[rgb]{0,0,0}$\mathbf{a}_+$}%
}}}}
\put(5410,-520){\makebox(0,0)[lb]{\smash{{\SetFigFont{20}{24.0}{\familydefault}{\mddefault}{\updefault}{\color[rgb]{0,0,0}.}%
}}}}
\put(6139,-1374){\makebox(0,0)[lb]{\smash{{\SetFigFont{8}{9.6}{\familydefault}{\mddefault}{\updefault}{\color[rgb]{0,0,0}$t_{a_0}$}%
}}}}
\put(5522,-380){\makebox(0,0)[lb]{\smash{{\SetFigFont{20}{24.0}{\familydefault}{\mddefault}{\updefault}{\color[rgb]{0,0,0}.}%
}}}}
\put(6184,-2506){\makebox(0,0)[lb]{\smash{{\SetFigFont{8}{9.6}{\familydefault}{\mddefault}{\updefault}{\color[rgb]{0,0,0}$\mathbf{b}$}%
}}}}
\put(9798,-2114){\makebox(0,0)[lb]{\smash{{\SetFigFont{8}{9.6}{\familydefault}{\mddefault}{\updefault}{\color[rgb]{0,0,0}$\mathbf{c}$}%
}}}}
\put(9389,-1905){\makebox(0,0)[lb]{\smash{{\SetFigFont{20}{24.0}{\familydefault}{\mddefault}{\updefault}{\color[rgb]{0,0,0}.}%
}}}}
\put(7577,-373){\makebox(0,0)[lb]{\smash{{\SetFigFont{8}{9.6}{\familydefault}{\mddefault}{\updefault}{\color[rgb]{0,0,0}$\mathbf{f}_+$}%
}}}}
\put(7577,151){\makebox(0,0)[lb]{\smash{{\SetFigFont{8}{9.6}{\familydefault}{\mddefault}{\updefault}{\color[rgb]{0,0,0}$\mathbf{f}_-$}%
}}}}
\put(7620,-1026){\makebox(0,0)[lb]{\smash{{\SetFigFont{20}{24.0}{\familydefault}{\mddefault}{\updefault}{\color[rgb]{0,0,0}.}%
}}}}
\put(7615,-873){\makebox(0,0)[lb]{\smash{{\SetFigFont{20}{24.0}{\familydefault}{\mddefault}{\updefault}{\color[rgb]{0,0,0}.}%
}}}}
\put(7490,-1462){\makebox(0,0)[lb]{\smash{{\SetFigFont{8}{9.6}{\familydefault}{\mddefault}{\updefault}{\color[rgb]{0,0,0}$\mathbf{f}_0$}%
}}}}
\put(7615,-697){\makebox(0,0)[lb]{\smash{{\SetFigFont{20}{24.0}{\familydefault}{\mddefault}{\updefault}{\color[rgb]{0,0,0}.}%
}}}}
\put(9971,-1244){\makebox(0,0)[lb]{\smash{{\SetFigFont{8}{9.6}{\familydefault}{\mddefault}{\updefault}{\color[rgb]{0,0,0}$\mathbf{d}_-$}%
}}}}
\put(9447,-1963){\makebox(0,0)[lb]{\smash{{\SetFigFont{20}{24.0}{\familydefault}{\mddefault}{\updefault}{\color[rgb]{0,0,0}.}%
}}}}
\put(9058,-1418){\makebox(0,0)[lb]{\smash{{\SetFigFont{8}{9.6}{\familydefault}{\mddefault}{\updefault}{\color[rgb]{0,0,0}$h_{a_0}$}%
}}}}
\put(5896,-1976){\makebox(0,0)[lb]{\smash{{\SetFigFont{20}{24.0}{\familydefault}{\mddefault}{\updefault}{\color[rgb]{0,0,0}.}%
}}}}
\put(9915,-732){\makebox(0,0)[lb]{\smash{{\SetFigFont{20}{24.0}{\familydefault}{\mddefault}{\updefault}{\color[rgb]{0,0,0}.}%
}}}}
\put(5322,-653){\makebox(0,0)[lb]{\smash{{\SetFigFont{20}{24.0}{\familydefault}{\mddefault}{\updefault}{\color[rgb]{0,0,0}.}%
}}}}
\put(5873,-2083){\makebox(0,0)[lb]{\smash{{\SetFigFont{20}{24.0}{\familydefault}{\mddefault}{\updefault}{\color[rgb]{0,0,0}.}%
}}}}
\put(5896,-1888){\makebox(0,0)[lb]{\smash{{\SetFigFont{20}{24.0}{\familydefault}{\mddefault}{\updefault}{\color[rgb]{0,0,0}.}%
}}}}
\put(9954,-828){\makebox(0,0)[lb]{\smash{{\SetFigFont{20}{24.0}{\familydefault}{\mddefault}{\updefault}{\color[rgb]{0,0,0}.}%
}}}}
\put(9857,-615){\makebox(0,0)[lb]{\smash{{\SetFigFont{20}{24.0}{\familydefault}{\mddefault}{\updefault}{\color[rgb]{0,0,0}.}%
}}}}
\put(9971,-286){\makebox(0,0)[lb]{\smash{{\SetFigFont{8}{9.6}{\familydefault}{\mddefault}{\updefault}{\color[rgb]{0,0,0}$\mathbf{d}_+$}%
}}}}
\end{picture}%

\caption{A quiver neighborhood of an arrow $f_0$.}\label{fig:localarrow}
\end{figure}

Let $Q=(Q_V,Q_A)$ be a connected quiver (with 2 or more arrows) and let $a_{0}\in Q_A$
be one of its arrows. Define $Q'$ as the quiver obtained from $Q$
by identifying the head and tail of $a_{0}$, and then removing $a_{0}$.
We will say that $Q'$ is obtained from $Q$ by {\it collapsing the arrow}
$a_{0}$. 

It is not difficult to see that the local picture of any
quiver around the particular arrow $a_{0}$ will be of the form indicated
in Figure \ref{fig:localarrow}. There will be arrows connecting $t_{a_{0}}$ to itself,
to $h_{a_{0}}$, and to other vertices of $Q$, and similarly for
$h_{a_{0}}$.

Let $G$ be a group. As in Figure \ref{fig:localarrow}, a quiver representation will
be labeled as a tuple $(\ab_+,\ab_-,\bb,f_{0},\fb_+,\fb_-,\cb,\db_+,\db_-,\mathbf{e})\in\mathcal{F}_{G}(Q)$ where
$f_{0}$ is associated with $a_{0}$, and any of the other letters
are associated with arrows in certain relative positions to the subquiver
$a_{0}\subset Q$. In particular, $\fb_+$ is the tuple of labels in $G$ associated to the arrows with tail
$t_{a_{0}}$ and head $h_{a_{0}}$ ($\fb_-$ has those tails and heads reversed), $\bb$ is the tuple of arrows whose head and tail is $t_{a_{0}}$, 
$\ab_+$ is the tuple associated with arrows that are not cycles and whose head is $t_{a_0}$ ($\ab_-$ is the tuple associated to arrows that are not cycles
and whose tail is $t_{a_0}$), $\mathbf{c}$ is 
the tuple associated with arrows whose head and tail is $h_{a_{0}}$, $\mathbf{d}_+$ is the tuple associated with 
arrows that are not cycles and whose tail is $h_{a_0}$ ($\db_-$ is the tuple associated to arrows that are not cycles and whose head is $h_{a_0}$), 
and $\mathbf{e}$ is the tuple associated with the arrows in $Q_A-L_{f_0}=\{e_1,...,e_{n_e}\}$, where $L_{f_0}$ is the union of all arrows local to $f_0$.  Precisely, $L_{f_0}=\{f_0,f_1,...,f_{n_f},
a_1,...,a_{n_a},b_1,...,b_{n_b},c_1,...c_{n_c},d_1,...,d_{n_d}\}$. If there are no arrows of some type (or combination of types) associated with 
$\mathbf{a},\mathbf{b},\mathbf{c},$ or $\mathbf{d}$, what follows can be easily adapted.

Let us write an element of $\mathcal{G}_{G}(Q)$ as $\gb=(g_{0},g_{1},...)\in G^{N_V}$,
where $g_{0}=g(t_{a_{0}})$ and $g_{1}=g(h_{a_{0}})$. Note that the
elements of $\mathcal{F}_{G}(Q')$ and $\mathcal{G}_{G}(Q')$ have
one less coordinate than those of $\mathcal{F}_{G}(Q)$ and $\mathcal{G}_{G}(Q)$;
we will write them in the form $(\mathbf{a}_+',\ab_-',\mathbf{b}',\mathbf{f}_+',\fb_-',\mathbf{c}',\mathbf{d}_+',\db_-',\mathbf{e}')\in\mathcal{F}_{G}(Q')$
and $(g_{1}',...)\in\mathcal{G}_{G}(Q')$. 

Let $\mathcal{M}_{Q}$ and $\mathcal{M}_{Q'}$ be the corresponding
moduli of $G$-valued representations.

Consider the map\begin{eqnarray*}
\mathsf{C}_{a_0}:\mathcal{M}_{Q} & \to & \mathcal{M}_{Q'}\\{}
[(\mathbf{a}_+,\ab_-,\mathbf{b},f_{0},\mathbf{f}_+,\fb_-,\mathbf{c},\mathbf{d}_+,\db_-,\mathbf{e})] & \mapsto & 
[(f_{0}\mathbf{a}_+,\ab_-f_0^{-1},f_{0}\mathbf{b}f_{0}^{-1},\mathbf{f}_+f_{0}^{-1},f_0\fb_-,\mathbf{c},\mathbf{d}_+,\db_-,\mathbf{e})]\end{eqnarray*}

We call the above mapping the collapsing map, and note that after performing a collapse of arrow $a_0$, the arrows $f_i$, $c_i$, and $b_i$ are all
one-arrow cycles at the same vertex.

\begin{prop}\label{prop:collapsingmap}
The map $\mathsf{C}_{a_0}$ is well-defined, and defines an isomorphism of algebraic
varieties.\end{prop}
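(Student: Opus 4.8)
The plan is to realize $\mathsf{C}_{a_0}$ as the map induced by normalizing the value of a representation on the arrow $a_0$ to the identity, and then to produce its inverse as the ``expanding'' inclusion that reinstates a trivial arrow. First I would observe that the defining formula of $\mathsf{C}_{a_0}$ is nothing but the effect of a single gauge transformation. Given $f=(\ab_+,\ab_-,\bb,f_0,\fb_+,\fb_-,\cb,\db_+,\db_-,\eb)\in\fnc_G(Q)$, let $\gb_f\in\gauge_G(Q)$ be the gauge element with $\gb_f(t_{a_0})=f_0$, $\gb_f(h_{a_0})=I$, and $\gb_f(v)=I$ for every other vertex $v$. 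A direct check using \eqref{eq:action} shows that $\gb_f\cdot f$ has $a_0$-coordinate equal to $I$, and that its remaining coordinates are precisely $(f_0\ab_+,\ab_-f_0^{-1},f_0\bb f_0^{-1},\fb_+f_0^{-1},f_0\fb_-,\cb,\db_+,\db_-,\eb)$. Dropping the now-trivial $a_0$-coordinate identifies $\gb_f\cdot f$ with an element $\Phi(f)\in\fnc_G(Q')$, so that $\mathsf{C}_{a_0}[f]=[\Phi(f)]$; here $\Phi$ is visibly a regular map, being assembled from multiplication and inversion in $G$.

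The technical heart is a slice observation. Let $S=\{f\in\fnc_G(Q):f_0=I\}$ and let $H\leq\gauge_G(Q)$ be the subgroup of gauge elements with $\gb(t_{a_0})=\gb(h_{a_0})$. Merging the two coordinates indexed by $t_{a_0}$ and $h_{a_0}$ identifies $H$ with $\gauge_G(Q')$, while dropping the $a_0$-coordinate identifies $S$ with $\fnc_G(Q')$, compatibly with the two actions; under these identifications $S\quot H=\moduli_{Q'}$. The crucial point is that a gauge element carrying one point of $S$ back into $S$ must lie in $H$: if $s\in S$ and $(\gb\cdot s)(a_0)=\gb(h_{a_0})\,\gb(t_{a_0})^{-1}=I$, then $\gb(h_{a_0})=\gb(t_{a_0})$, i.e. $\gb\in H$. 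I expect this to be the main obstacle to phrase cleanly, since it is exactly what forces the two merged vertices to carry a single gauge parameter and thereby makes the collapsed moduli come out correctly; it is the conceptual analogue of the ``elements that preserve the normal form'' computations in Proposition \ref{prop:tailquiver} and the long-loop example.

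Using this I would establish well-definedness and the morphism property simultaneously, via the universal property in the definition of a categorical quotient. For any $g\in\gauge_G(Q)$, both $\gb_f\cdot f$ and $\gb_{g\cdot f}\cdot(g\cdot f)$ lie in $S$ and are $\gauge_G(Q)$-translates of one another, so the element connecting them preserves $S$ and hence lies in $H$; therefore $\Phi(f)$ and $\Phi(g\cdot f)$ are $\gauge_G(Q')$-equivalent, which shows that $\pi_{Q'}\circ\Phi$ is $\gauge_G(Q)$-invariant. It consequently factors through a morphism $\mathsf{C}_{a_0}\colon\moduli_Q\to\moduli_{Q'}$ (a morphism of varieties in the GIT case, and a continuous map in the compact orbit-space case). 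For the inverse, the closed immersion $\iota\colon\fnc_G(Q')\hookrightarrow\fnc_G(Q)$ setting $f_0=I$ is $H$-equivariant, so $\pi_Q\circ\iota$ is $H$-invariant and descends to a morphism $\mathsf{E}_{a_0}\colon\moduli_{Q'}\to\moduli_Q$ (the expanding map).

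Finally I would verify that the two composites are identities. Since $f_0=I$ trivializes every factor of the formula for $\Phi$, one has $\Phi\circ\iota=\mathrm{id}$ on $\fnc_G(Q')$, giving $\mathsf{C}_{a_0}\circ\mathsf{E}_{a_0}=\mathrm{id}$. Conversely $\iota\circ\Phi(f)=\gb_f\cdot f$ is gauge-equivalent to $f$, so $\mathsf{E}_{a_0}\circ\mathsf{C}_{a_0}\circ\pi_Q=\pi_Q\circ\iota\circ\Phi=\pi_Q$; since $\pi_Q$ is surjective this forces $\mathsf{E}_{a_0}\circ\mathsf{C}_{a_0}=\mathrm{id}$. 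Thus $\mathsf{C}_{a_0}$ and $\mathsf{E}_{a_0}$ are mutually inverse morphisms, so $\mathsf{C}_{a_0}$ is well-defined and an isomorphism of algebraic varieties, as claimed.
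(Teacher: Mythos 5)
Your proposal is correct, but it takes a genuinely different route from the paper's proof. The paper works with the explicit lift $\widetilde{\mathsf{C}_{a_0}}$ on representation spaces: it verifies gauge equivariance by direct computation (giving well-definedness), proves surjectivity by taking $f_0=e$, proves injectivity by a normal-form argument on orbit spaces, and concludes it is an isomorphism because the map and its set-theoretic inverse are polynomial. You instead make the normal form the organizing principle from the start: the collapsing formula is the gauge normalization $f\mapsto\gb_f\cdot f$ into the slice $S=\{f_0=I\}$; your slice lemma (a gauge element carrying a point of $S$ into $S$ must have equal values at $t_{a_0}$ and $h_{a_0}$, hence lies in $H\cong\gauge_G(Q')$) yields $\gauge_G(Q)$-invariance of $\pi_{Q'}\circ\Phi$; and then both the collapsing and the expanding maps descend via the universal property of categorical quotients, with the two composites equal to the identity by uniqueness of factorization. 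What your route buys: you never need the paper's assertion that injectivity of the induced orbit-space map forces injectivity of the GIT morphism --- a delicate point, since points of a GIT quotient correspond to closed orbits rather than arbitrary orbits --- and, being purely categorical, your argument applies verbatim in any category where the quotients exist, so the compact orbit-space case (noted in the paper's remark after the proposition) comes for free. What the paper's route buys: its explicit computation extends immediately, as its final paragraph does, to quivers with relations, which Theorem \ref{charvarquiverequal} later requires; your argument as written covers only the relation-free statement, although it extends readily --- since the normalized $a_0$-coordinate of $\gb_f\cdot f$ is $I$, the value of any cycle through $a_0$ is unchanged by $\Phi$, so $\Phi$ and $\iota$ restrict to the subvarieties cut out by corresponding relation sets, and the same descent argument applies there.
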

\begin{proof}
We prove this for the moduli space of quiver representations without relations in detail, and then say a few words why the result holds true in general.

Let us first show that $\mathsf{C}_{a_0}$ is well-defined.  Let

\begin{eqnarray*}
\widetilde{\mathsf{C}_{a_0}}:\mathcal{F}_{Q} & \to & \mathcal{F}_{Q'}\\{}
(\mathbf{a}_+,\ab_-,\mathbf{b},f_{0},\mathbf{f}_+,\fb_-,\mathbf{c},\mathbf{d}_+,\db_-,\mathbf{e}) & \mapsto & 
(f_{0}\mathbf{a}_+,\ab_-f_0^{-1},f_{0}\mathbf{b}f_{0}^{-1},\mathbf{f}_+f_{0}^{-1},f_0\fb_-,\mathbf{c},\mathbf{d}_+,\db_-,\mathbf{e})\end{eqnarray*}
be the map on representations associated to $\mathsf{C}_{a_0}$.  To prove the latter is well-defined, it suffices to prove the former is gauge group
equivariant.

Let $\mathbf{h}_{0}$ (respectively $\mathbf{h}_{1}$) be the tuple of coordinates in $\mathbf{g}\in G^{N_V}$ associated to
the opposite end of an arrow that is not a cycle and whose intersection with $a_{0}$ is $t_{a_{0}}$ (respectively $h_{a_{0}}$); i.e. the opposite ends 
of the arrows associated to $\mathbf{a}$ (respectively $\mathbf{d}$).  

Some words on notation:  when we write $g\cdot f$ we mean the gauge group action on the quiver and if $f$
is a representation of a subquiver we mean the restricted action.  When we write $x\mathbf{v}$ we mean the element $x\in G$ is multiplied by each 
component of the tuple $\mathbf{v}$, $\mathbf{v}\mathbf{w}$ means component-wise multiplication of tuples, and $\mathbf{v}^{-1}$ means component-wise 
inversion.  Lastly, we will use $[\fb]$ to denote the orbit of a representation $\fb$.

The action of $\mathcal{G}_{G}(Q)$ on $\mathcal{F}_{G}(Q)$
is given by: 
\begin{eqnarray}\label{eq:collapse-action}
&\gb\cdot(\ab_+,\ab_-,\bb,f_{0},\fb_+,\fb_-,\cb,\db_+,\db_-,\eb)=\\ 
&(g_{0}\ab_+\hb_{0}^{-1},\hb_0\ab_-g_0^{-1},g_{0}\bb g_{0}^{-1},g_{1}f_{0}g_{0}^{-1},g_{1}\fb_+g_{0}^{-1},g_0\fb_-g^{-1}_1,
g_{1}\cb g_{1}^{-1},\hb_{1}\db_+g_{1}^{-1},g_1\db_-\hb_1^{-1},\gb\cdot\eb)\nonumber
\end{eqnarray}
so \begin{eqnarray*}
& &\widetilde{\mathsf{C}_{a_0}}(\gb\cdot(\ab_+,\ab_-,\bb,f_{0},\fb_+,\fb_-,\cb,\db_+,\db_-,\eb))    \\
&=&(g_{1}f_{0}g_{0}^{-1}g_{0}\ab_+\hb_{0}^{-1},\hb_0\ab_-g_0^{-1}(g_{1}f_{0}g_{0}^{-1})^{-1},g_{1}f_{0}g_{0}^{-1}g_{0}\bb g_{0}^{-1}
(g_{1}f_{0}g_{0}^{-1})^{-1},\\ 
& &g_{1}\fb_+g_{0}^{-1}(g_{1}f_{0}g_{0}^{-1})^{-1},g_{1}f_{0}g_{0}^{-1}g_0\fb_-g^{-1}_1,
g_{1}\cb g_{1}^{-1},\hb_{1}\db_+g_{1}^{-1},g_1\db_-\hb_1^{-1},\gb\cdot\eb)\\
&=&(g_{1}f_{0}\ab_+\hb_{0}^{-1},\hb_0\ab_-f_{0}^{-1}g_1^{-1},g_{1}f_{0}\bb f_{0}^{-1}g_{1}^{-1},g_{1}\fb_+f_{0}g_{1}^{-1},\\
& & g_{1}f_{0}\fb_-g^{-1}_1,g_{1}\cb g_{1}^{-1},\hb_{1}\db_+g_{1}^{-1},g_1\db_-\hb_1^{-1},\gb\cdot\eb)\\
& =&  (g_1,...)\cdot(f_{0}\ab_+,\ab_-f_{0}^{-1},f_{0}\bb f_{0}^{-1},\fb_+f_{0}^{-1},f_{0}\fb_-,\cb ,\db_+,\db_-,\eb)]\\
 & =& \gb'\cdot\widetilde{\mathsf{C}_{a_0}}((\ab_+,\ab_-,\bb,f_{0},\fb_+,\fb_-,\cb,\db_+,\db_-,\eb)),\end{eqnarray*} as was to be shown.
This finishes the proof that $\mathsf{C}_{a_0}$ is well defined, since equivariance implies the induced orbit space mapping is well
defined which in turn implies the induced affine GIT quotient mapping (where relevant) is well-defined since the latter is uniquely determined by the 
former.

Now, we show that $\mathsf{C}_{a_0}$ is bijective. Surjectivity is
clear, since we can just take $f_{0}=e$ to obtain any class $[(\ab_+,\ab_-,\bb,\fb_+,\fb_-,\cb,\db_+,\db_-,\eb)]\in\mathcal{M}_{Q'}$
as an image point of $\mathsf{C}_{a_0}$. 

To show injectivity, we prove that the orbit space mapping is injective.  This will imply that the corresponding GIT quotient mapping is also
injective since the affine GIT quotient is uniquely determined by its orbit space (it is a universal object).  Note that any point
$(\ab_+,\ab_-,\bb,f_0,\fb_+,\fb_-,\cb,\db_+,\db_-,\eb)\in\mathcal{F}_{G}(Q)$ is in the orbit of the point
$$(f_{0}\ab_+\hb_{0}^{-1},\hb_0\ab_-f_0^{-1},f_{0}\bb f_{0}^{-1},e,\fb_+f_{0}^{-1},f_0\fb_-,
c,\hb_{1}\db_+,\db_-\hb_1^{-1},\gb\cdot\eb)$$
upon acting by $\gb_{f_0}:=(g_{0},g_{1},...)=(f_{0},e,...)\in\mathcal{G}_{G}(Q)$.
Now, to preserve this form under the same action (Equation \ref{eq:collapse-action}),
we need to have $g_{0}=g_{1}\in G$. 

Now suppose that the orbits of $\widetilde{\mathsf{C}_{a_0}}(\rho_1)$ and $\widetilde{\mathsf{C}_{a_0}}(\rho_1)$ are equal for
$\rho_1,\rho_2\in\mathcal{F}_{G}(Q)$ in the ``normal form'' above. Then there exists $\gb'=(g_{1},...)\in\mathcal{G}_{G}(Q')$ so that 
$\gb'\cdot\widetilde{\mathsf{C}_{a_0}}(\rho_1)=\widetilde{\mathsf{C}_{a_0}}(\rho_2)$.  Let $\rho_1=
(\ab^1_+,\ab^1_-,\bb^1,e,\fb^1_+,\fb^1_-,\cb^1,\db^1_+,\db^1_-,\eb^1)$ and $\rho_2=
(\ab^2_+,\ab^2_-,\bb^2,e,\fb^2_+,\fb^2_-,\cb^2,\db^2_+,\db^2_-,\eb^2).$  Then 

\begin{eqnarray*}
& & (g_{1}\ab^1_+\hb_{0}^{-1},\hb_0\ab^1_-g_1^{-1},g_{1}\bb^1g_{1}^{-1},g_{1}\fb^1_+g_{1}^{-1},
g_{1}\fb^1_-g^{-1}_1,g_{1}\cb^1 g_{1}^{-1},\hb_{1}\db^1_+g_{1}^{-1},g_1\db^1_-\hb_1^{-1},\gb\cdot\eb^1)\\
&=&(\ab^2_+,\ab^2_-,\bb^2,\fb^2_+,\fb^2_-,\cb^2,\db^2_+,\db^2_-,\eb^2)
\end{eqnarray*}
and thus
\begin{eqnarray*}
& & (g_{1}\ab^1_+\hb_{0}^{-1},\hb_0\ab^1_-g_1^{-1},g_{1}\bb^1g_{1}^{-1},e,g_{1}\fb^1_+g_{1}^{-1},
g_{1}\fb^1_-g^{-1}_1,g_{1}\cb^1 g_{1}^{-1},\hb_{1}\db^1_+g_{1}^{-1},g_1\db^1_-\hb_1^{-1},\gb\cdot\eb^1)\\
&=&(\ab^2_+,\ab^2_-,\bb^2,e,\fb^2_+,\fb^2_-,\cb^2,\db^2_+,\db^2_-,\eb^2)
\end{eqnarray*}
and so
$(g_1,g_1,...)\cdot \rho_1=\rho_2$, precisely implying the orbit map is injective.

Since $\mathsf{C}_{a_0}$ and its set theoretic inverse are both given by polynomials (multiplication and inversion in an algebraic group are so given),
this defines an isomorphism of algebraic varieties (which implies it is a homeomorphism).

So we have shown that $\mathsf{C}_{a_0}$ determines $\moduli_Q\cong\moduli_{Q'}$ for a quiver $Q$ where $a_0$ is an arrow with different head
and tail.  It remains to prove that if we add in relations, then the mapping remains an isomorphism.

When considering quivers with relations, the cycle
$p d f_{0} a$ (read: first go along an arrow $a$ from the collection associated to $\ab_+$, then $f_{0}$, then an arrow $d$ associated to the $\db_+$, 
and lastly go along the path $p$ which ends where $a$ begins) goes precisely to $(p)(d)(f_{0}a)$ under $\mathsf{C}_{a_0}$.  This implies that cycle relations (with respect to their labelings) are preserved (clearly collapsing an arrow in a cycle of the quiver 
$Q$ results in another cycle in $Q'$).  So the mapping is well-defined between quivers with relations as well.   Since the moduli of quiver representations
is a subvariety of one without relations and the mapping above is globally injective, the restricted mapping is injective too.  Lastly, the above argument
for surjectivity still applies to the context of quivers with relations.  Thus, $\mathsf{C}_{a_0}$ is likewise an isomorphism between appropriate
moduli spaces of quiver representations with relations.

\end{proof}

\begin{rem}
We proved above that $\mathsf{C}_{a_0}$ is an isomorphism of GIT quotients when $G$ is a reductive algebraic group.  However, the above proof shows that $\mathsf{C}_{a_0}$ is a well-defined bijection of orbit spaces for any group $G$.  For example, we have an isomorphism of compact quotients when $G$ is compact. 
\end{rem}

This proposition allows us to generalize Example \ref{prop:long-path} as follows.

\begin{prop}
Let $Q$ be any connected quiver whose underlying graph is a tree. Then $\mathcal{M}_{G}(Q)$
is a single point.
\end{prop}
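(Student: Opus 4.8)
The plan is to induct on the number of arrows $N_A$, using the collapsing isomorphism of Proposition \ref{prop:collapsingmap} as the engine. Since $Q$ is a connected tree on $N_V$ vertices, it has exactly $N_A = N_V - 1$ arrows, and its underlying graph contains no cycles. In particular every arrow $a_0$ satisfies $h_{a_0}\neq t_{a_0}$ (there are no loops), which is precisely the hypothesis required to apply Proposition \ref{prop:collapsingmap} at $a_0$. So at each stage we are guaranteed an arrow we are allowed to collapse.

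For the base case, a tree with a single arrow is the one-arrow quiver (two vertices joined by one edge), whose moduli space was computed in the examples above to be a single point, since $G^2$ acts transitively on $G \cong \fnc_G(Q)$. For the inductive step, suppose $N_A \geq 2$ and choose any arrow $a_0$. Because $h_{a_0}\neq t_{a_0}$, Proposition \ref{prop:collapsingmap} yields an isomorphism $\moduli_Q \cong \moduli_{Q'}$, where $Q'$ is obtained from $Q$ by collapsing $a_0$. I would then argue that $Q'$ is again a connected tree, now with $N_A - 1$ arrows, so that the inductive hypothesis applies and gives that $\moduli_{Q'}$, and hence $\moduli_Q$, is a single point.

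The only genuine content is therefore the combinatorial claim that collapsing an arrow of a tree produces a tree. Connectivity is immediate, since contracting an edge of a connected graph leaves it connected. Acyclicity is the point to check: a cycle in $Q'$ would lift either to a cycle in $Q$ or to a nontrivial path in $Q$ joining the two endpoints of $a_0$ (which, closed up by $a_0$, is again a cycle in $Q$), contradicting that $Q$ is a tree. In the same way one sees that the contraction creates neither a loop at the merged vertex nor a pair of parallel arrows, as either would exhibit a cycle through $a_0$ in $Q$; consequently every arrow of $Q'$ still has distinct head and tail, so the process may be iterated.

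I expect the mild subtlety to lie exactly in this last verification — ruling out loops and multiple edges after contraction — rather than in any analysis of the moduli spaces themselves, since once the tree structure is shown to persist the statement reduces, by repeated application of Proposition \ref{prop:collapsingmap}, to the already-computed one-arrow case. This also makes the result a clean generalization of Example \ref{prop:long-path}, the long path being the special tree in which every vertex has degree at most two.
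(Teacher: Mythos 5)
Your proposal is correct and is essentially the paper's intended argument: the paper states this proposition without proof, as an immediate consequence of repeatedly applying the collapsing isomorphism of Proposition \ref{prop:collapsingmap} until one reaches the one-arrow quiver, whose moduli space was computed to be a point in the examples. Your write-up simply makes explicit the (correct) combinatorial verification that contracting an edge of a tree yields a tree with no loops, so the induction can proceed.
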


Any moduli space of quiver representations with one vertex is a character variety.  So character varieties are a subset of the 
moduli spaces of $G$-valued quivers; as sets, topological spaces, and varieties.  We now prove a surprising converse.  Namely, any moduli space
of $G$-valued quivers is algebraically (and consequently topologically) equivalent to a character variety despite that fact that as $(G,X)$-spaces the
representation of quivers (with relations) and the representations of finitely generated groups are not equivalent.

As mentioned earlier any quiver $Q$ has an associated one dimensional $CW$ complex,
and its first homology group (coefficients in $\mathbb{Z}$) is a free Abelian
group of a certain rank. Let us write $b_{1}(Q)=\mathsf{rk}\left(H_{1}(Q,\mathbb{Z})\right)$
and call this invariant the first Betti number of $Q$. 

\begin{thm}[Equivalence Theorem]\label{charvarquiverequal}Let $Q$ be a connected quiver, that is not contractible as a $\mathrm{CW}$ complex.
Then any moduli space of $G$-valued quivers, with or without relations, is algebraically isomorphic to a $G$-character variety which embeds into
$\hom(F_r,G)\quot G$ where $r=b_1(Q)$ and $F_r$ is a free group of rank $r$.  Otherwise, the moduli space is a point.
\end{thm}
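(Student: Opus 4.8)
The plan is to reduce the general case to the one-vertex case by iterating the collapsing isomorphism of Proposition~\ref{prop:collapsingmap}, and then to invoke Remark~\ref{charvarremark} (together with Remark~\ref{charactervarietywithrelationsremark} when relations are present) to identify the resulting moduli space with a character variety of a free group, respectively of a finitely presented group. The whole argument is an induction on the number of vertices $N_V$.

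First I would record the two structural facts that drive the induction. If $a_0$ is an arrow with $t_{a_0}\neq h_{a_0}$, then collapsing $a_0$ deletes one vertex and one arrow, so for a connected quiver the first Betti number $b_1(Q)=N_A-N_V+1$ is unchanged; thus $b_1$ is a collapsing invariant. Moreover, collapsing cannot disconnect the underlying graph, since identifying the two endpoints of $a_0$ and deleting $a_0$ leaves every former path available. Consequently, as long as $N_V\geq 2$, connectedness forces the existence of a non-loop arrow (an arrow joining two distinct vertices), to which Proposition~\ref{prop:collapsingmap} applies. Composing the resulting isomorphisms of algebraic varieties over $N_V-1$ steps produces a connected one-vertex quiver $\widehat{Q}$ carrying exactly $r:=b_1(Q)$ loops, with $\moduli_G(Q)\cong\moduli_G(\widehat{Q})$.

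For the base case, Remark~\ref{charvarremark} identifies the moduli space of a one-vertex quiver with $r$ loops with the $G$-character variety $\hom(F_r,G)\quot G$ of the free group $F_r$ of rank $r$, since the gauge group collapses to a single copy of $G$ acting by simultaneous conjugation. This settles the relation-free, non-contractible case. When $b_1(Q)=0$, equivalently $Q$ is a tree and hence contractible, $\widehat{Q}$ has no arrows, $F_0$ is trivial, and the moduli space is a single point; this is the final clause of the theorem and recovers the earlier tree computation.

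It remains to treat relations and to produce the claimed embedding. Here I would use the fact, verified inside the proof of Proposition~\ref{prop:collapsingmap}, that collapsing sends cycle relations to cycle relations; so after the $N_V-1$ collapses the relation set $R$ becomes a finite set of words $\widehat{R}$ at the single vertex of $\widehat{Q}$, presenting a finitely presented group $\Gamma=F_r/\langle\langle\widehat{R}\rangle\rangle$. By Remark~\ref{charactervarietywithrelationsremark} we then have $\moduli_G(Q,R)\cong\hom(\Gamma,G)\quot G$. The surjection $F_r\twoheadrightarrow\Gamma$ induces a closed, gauge-invariant inclusion $\hom(\Gamma,G)\hookrightarrow\hom(F_r,G)$, namely the homomorphisms annihilating the relations; passing to quotients (for reductive $G$, a closed invariant subvariety induces a closed immersion of affine GIT quotients) yields the embedding $\hom(\Gamma,G)\quot G\hookrightarrow\hom(F_r,G)\quot G$. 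The main point requiring care is the relation bookkeeping through the iterated collapse — one must check that the transported words $\widehat{R}$ genuinely cut out the image subvariety and that no spurious identifications arise — but this is precisely guaranteed by the relation-preserving statement already established in Proposition~\ref{prop:collapsingmap}; the remainder is routine induction.
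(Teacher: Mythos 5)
Your proposal is correct and follows essentially the same route as the paper: iterate the collapsing isomorphism of Proposition~\ref{prop:collapsingmap} on non-loop arrows until a single vertex remains (noting $b_1$ is preserved), then invoke Remarks~\ref{charvarremark} and~\ref{charactervarietywithrelationsremark} to identify the result with a character variety of $F_r$ or of a finitely presented quotient $\Gamma$ of $F_r$. Your additional details — existence of a non-loop arrow when $N_V\geq 2$, preservation of connectedness, and the justification that the closed invariant inclusion $\hom(\Gamma,G)\hookrightarrow\hom(F_r,G)$ induces a closed immersion of GIT quotients by reductivity — are all sound and in fact make explicit points the paper's proof leaves implicit.
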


\begin{proof}
By induction we can use Proposition \ref{prop:collapsingmap} repeatedly until all arrows are cycles.  From a homological perspective, the 1-complex 
has a 1-cell contracted and so does not change the value of $r=b_1(Q)$. So if there are no relations, then the result follows since a connected 
quiver with one vertex corresponds to a free group and the action is that of simultaneous conjugation.

For quivers with relations, Proposition \ref{prop:collapsingmap} again applies inductively until there is only one vertex of $Q$.  Thus quivers with 
relations are mapped to representations of finitely generated groups and since each collapsing map was an isomorphism the composition of them is as well.  The number of generators for the finitely generated group $\Gamma$ is the numbers of arrows which corresponds to 
$r=b_1(Q)$.  Thus $\moduli_G(Q,R)\cong \hom(\Gamma,G)\quot G\subset \hom(F_r,G)\quot G.$

\end{proof}

\begin{cor}
Let $Q$ be a quiver and $\tilde{Q}$ be the quiver $Q$ with some of its directions reversed.  Then $\moduli_G(Q)\cong 
\moduli_G(\tilde{Q})$.
\end{cor}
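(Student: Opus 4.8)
The plan is to realize the reversal as a gauge-equivariant isomorphism induced by inverting the values on the reversed arrows. Reversing a subset $S\subseteq Q_A$ of arrows leaves the vertex set unchanged, so $\gauge_G(\tilde Q)=\gauge_G(Q)=G^{N_V}$, while as sets $\fnc_G(\tilde Q)\cong G^{N_A}\cong\fnc_G(Q)$. First I would define $\Phi:\fnc_G(Q)\to\fnc_G(\tilde Q)$ by $\Phi(f)(\tilde a)=f(a)^{-1}$ for $a\in S$ and $\Phi(f)(a)=f(a)$ for $a\notin S$; in coordinates this is inversion on the $S$-factors and the identity elsewhere. The relevant geometric fact is that for a reversed arrow $a$ one has $h_{\tilde a}=t_a$ and $t_{\tilde a}=h_a$.

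The key step is to verify that $\Phi$ intertwines the two gauge actions. For a reversed arrow $a$ and $g\in\gauge_G$, applying the action rule (\ref{eq:action}) on $Q$ and then on $\tilde Q$,
\[
\Phi(g\cdot f)(\tilde a)=\big(g(h_a)f(a)g(t_a)^{-1}\big)^{-1}=g(t_a)f(a)^{-1}g(h_a)^{-1}=g(h_{\tilde a})\,\Phi(f)(\tilde a)\,g(t_{\tilde a})^{-1}=(g\cdot\Phi(f))(\tilde a),
\]
and on the unreversed arrows equivariance is immediate. Thus $\Phi$ is a $\gauge_G$-equivariant bijection whose inverse is the same recipe applied to $\tilde Q$ (inversion being an involution). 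When $G$ is an algebraic group, inversion is a regular morphism, so $\Phi$ is an isomorphism of affine varieties. Equivariance then forces $\Phi$ to descend to the quotients—exactly as in the well-definedness and bijectivity arguments for $\mathsf{C}_{a_0}$, using that the GIT quotient is uniquely determined by its orbit space—yielding $\moduli_G(Q)\cong\moduli_G(\tilde Q)$, as GIT quotients when $G$ is complex reductive and as orbit spaces when $G$ is compact or arbitrary.

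For quivers with relations I would track the effect of $\Phi$ on a cycle relation: inverting the letters sitting over the reversed arrows turns the word $f(a_k)\cdots f(a_1)=I$ into the corresponding word in the arrows of $\tilde Q$, which equals $I$ if and only if the original does, since $\Phi$ is assembled from the group operations. Declaring $\tilde R$ to be the resulting set of relations on $\tilde Q$, the equivariant isomorphism $\Phi$ restricts to an isomorphism $\fnc_G(Q,R)\to\fnc_G(\tilde Q,\tilde R)$ of closed subvarieties, hence to $\moduli_G(Q,R)\cong\moduli_G(\tilde Q,\tilde R)$.

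The hard part will be the relations bookkeeping: when only some arrows of a relation cycle are reversed, the transformed word is no longer literally an oriented cycle in $\tilde Q$ (a reversed arrow is traversed against its new orientation), so fitting $\tilde R$ into the paper's strict cycle formalism requires care. At the level of moduli spaces this is harmless, and one may bypass it entirely by invoking the Equivalence Theorem (Theorem \ref{charvarquiverequal}): reversal preserves the underlying undirected $1$-complex, so $b_1(\tilde Q)=b_1(Q)=r$, whence both moduli spaces are isomorphic to $\hom(\Gamma,G)\quot G$ for the same presented group $\Gamma$ (reversal merely replaces certain generators by their inverses, giving an isomorphic presentation), and in the relation-free case both coincide with $\hom(F_r,G)\quot G$.
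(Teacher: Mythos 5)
Your proof is correct, but it takes a genuinely different route from the paper's. The paper disposes of this corollary in one line by citing its collapsing machinery: collapsing all non-loop arrows (Proposition \ref{prop:collapsingmap}) reduces both $Q$ and $\tilde{Q}$ to the same disjoint union of wedges of loops and single arrows, where reversal is invisible because a loop has equal head and tail; your fallback paragraph via Theorem \ref{charvarquiverequal} is essentially this argument. Your primary argument, by contrast, never collapses anything: it exhibits an explicit map $\Phi$ at the level of representation spaces, inverting the values on the reversed arrows, and checks that it intertwines the two gauge actions (your computation using $h_{\tilde{a}}=t_a$, $t_{\tilde{a}}=h_a$ is exactly right). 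This buys something the paper's route does not: since $\gauge_G(Q)=\gauge_G(\tilde{Q})$, your $\Phi$ is a gauge-equivariant isomorphism \emph{upstairs}, i.e.\ an isomorphism of $(G,X)$-pairs, whereas the paper's isomorphism is a composition of collapsing maps passing through quivers with strictly smaller gauge groups and is only an isomorphism of quotients; your argument also requires no connectivity hypothesis and works verbatim as a bijection of orbit spaces for an arbitrary group $G$. Your caveat about relations is well taken but does not affect the corollary as stated, which is relation-free: a cycle with some letters inverted is indeed no longer an oriented cycle in $\tilde{Q}$, so the transported relations fall outside the paper's strict cycle formalism, even though the subvarieties they cut out still correspond under $\Phi$.
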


\begin{proof}
After collapsing all non-cycle arrows to a disjoint union of wedges of cycles and single arrows, the directions become irrelevant.
\end{proof}

Let $\zeta(G)$ be the center of the group $G$ and $\chi(Q)=1-b_{1}(Q)$ be the Euler characteristic of the quiver $Q$.

\begin{cor}\label{cor:dimension} Let $Q$ be a connected quiver.  Then $\dim\left(\moduli_{G}(Q)\right)=\dim(\zeta(G))-\dim(G)\chi(Q)$. \end{cor}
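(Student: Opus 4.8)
The plan is to reduce immediately to a free-group character variety via the Equivalence Theorem and then carry out a standard dimension count for the simultaneous conjugation action. By Theorem \ref{charvarquiverequal}, if $Q$ is not contractible then $\moduli_G(Q)\cong\hom(F_r,G)\quot G$ with $r=b_1(Q)$, where $\hom(F_r,G)\cong G^r$ carries the simultaneous conjugation action of $G$. Since $\chi(Q)=1-b_1(Q)=1-r$, the asserted identity is equivalent to $\dim\moduli_G(Q)=(r-1)\dim(G)+\dim(\zeta(G))$, so it suffices to compute $\dim\left(G^r\quot G\right)$.

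First I would record that $\dim\hom(F_r,G)=\dim G^r=r\dim(G)$, and that for a reductive quotient the dimension of $G^r\quot G$ equals $\dim G^r$ minus the dimension of a generic orbit. A generic orbit has dimension $\dim(G)-\dim(S)$, where $S=\bigcap_{i=1}^{r}Z_G(g_i)$ is the common centralizer (stabilizer) of a generic tuple $(g_1,\dots,g_r)$; so the entire computation reduces to identifying $S$ for generic tuples. The crux of the argument --- and the step I expect to be the main obstacle --- is to show that for $r\geq 2$ the generic stabilizer is exactly the center, i.e. $\dim(S)=\dim(\zeta(G))$. The cleanest route is to invoke that for a connected reductive $G$ and $r\geq 2$ the set of tuples generating a Zariski-dense subgroup of $G$ is nonempty and Zariski-open, hence dense; the common centralizer of a Zariski-dense subgroup coincides with $Z_G(G)=\zeta(G)$. (For $G\subseteq\GL(n,\C)$ one may instead note that a generic tuple of $r\geq 2$ operators is irreducible, so by Schur its commutant is the scalars, again matching $\zeta(G)$.) Granting this, the generic orbit has dimension $\dim(G)-\dim(\zeta(G))$, whence
\[
\dim\moduli_G(Q)=r\dim(G)-\bigl(\dim(G)-\dim(\zeta(G))\bigr)=(r-1)\dim(G)+\dim(\zeta(G))=\dim(\zeta(G))-\dim(G)\chi(Q),
\]
as claimed.

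Finally I would treat the low-Betti cases separately, since the density statement genuinely requires $r\geq 2$. If $b_1(Q)=0$ then $Q$ is contractible and $\moduli_G(Q)$ is a point by Theorem \ref{charvarquiverequal}; if $b_1(Q)=1$ the moduli space is $G\quot G$, whose GIT quotient is $T/W$ for a maximal torus $T$ and so has dimension equal to the rank of $G$ (here the generic stabilizer is a maximal torus rather than $\zeta(G)$). Thus the low cases must be verified directly and the formula is to be read in the range $r\geq 2$ where the generic stabilizer is central; the substantive content of the corollary is entirely the generic-stabilizer computation above, with the Equivalence Theorem supplying the reduction and the relation $\chi(Q)=1-b_1(Q)$ converting the count into the stated form.
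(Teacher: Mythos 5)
Your proof is correct and follows essentially the same route as the paper: reduce to the free-group character variety via the Equivalence Theorem and then compute $\dim\hom(F_r,G)\quot G = r\dim G-(\dim G-\dim\zeta(G))$; the paper simply asserts this formula, whereas you supply the missing justification that the generic stabilizer of an $r$-tuple is the center (via Zariski-density of generically generated subgroups, or Schur's lemma). In fact, your caveat about low Betti number is a point where you are more careful than the paper itself: the corollary is stated for all connected quivers, but as you observe it fails for $b_1(Q)\leq 1$ when $G$ is nonabelian. For $b_1(Q)=1$ the moduli space is $G\quot G\cong T/W$, of dimension equal to the rank of $G$ rather than $\dim\zeta(G)$ --- indeed the paper's own computation $\moduli_{\SL(n,\C)}(Q)\cong\C^{n-1}$ for a loop quiver contradicts the stated formula --- and for $b_1(Q)=0$ the moduli space is a point while the formula gives the negative number $\dim\zeta(G)-\dim G$. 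So your restriction of the argument to $r\geq 2$, with the low cases handled separately, is the correct reading of the statement, and the generic-stabilizer computation you isolate is exactly the substantive content that the paper's one-line proof leaves implicit.
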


\begin{proof}
The dimension of the character variety of
a free group of rank $r$ is computed as follows: $\dim\hom(F_{r},G)\quot G=r\dim G-(\dim G-\dim\zeta(G))=\dim\zeta(G)+(r-1)\dim G$.
So the result follows from Theorem \ref{charvarquiverequal}. 
\end{proof}

Performing operations on quivers gives more flexibility in the study of 
the topology of the moduli of $G$-valued quiver representations and is therefore a 
potentially powerful tool in the general study of the 
moduli of representations; i.e, character varieties.  We will demonstrate this in the 
next section. 

\section{Strong Deformation Retractions and Pinching Vertices}
In what follows and unless otherwise stated, the topology on the moduli spaces we consider is the subspace topology induced by a polynomial embedding into affine space.  As stated earlier, up to homeomorphism, this topology is independent of the choice of polynomial embedding.

\subsection{Kempf-Ness Sets and Strong Deformation Retractions}

In this section we discuss results of \cite{KN}, \cite{Sch1}, and \cite{Ne} that are relevant to our situation.  

Let $V_G$ be an affine $G$-variety, $V_G\quot G=\mathrm{Spec}_{max}\left(\C[V_G]^G\right)$.

We may assume $V_G$ is equivariantly embedded as a closed subvariety of a representation $G\to \mathrm{GL}(V)$.  Let $\langle\ ,\ \rangle$ be a $K$-invariant Hermitian form on $V$ with norm denoted by $\norm{\ }$.  

Define for any $v\in V$ the mapping $p_v:G\to \mathbb{R}$ by $g\mapsto \norm{g\cdot v}^2$.  It is shown in \cite{KN} that any critical point of $p_v$ is a point where $p_v$ attains its minimum value.  Moreover, the orbit $G\cdot v$ is closed and $v \not= 0$ if and only if $p_v$ attains a minimum value.  

Define $\mathcal{KN}\subset V_G\subset V$ to be the set of critical points $\{v\in V_G\subset V\ |\ (dp_v)_{\id}=0\}$, where $\id\in G$ is the identity.  This set is called the \emph{Kempf-Ness} set of $V_G$. 
Since the Hermitian norm is $K$-invariant, for any point in $\mathcal{KN}$, its entire $K$-orbit is also contained in $\mathcal{KN}$.

Recall a strong deformation retraction from a space $X$ to a subspace $A$ is a homotopy relative to $A$ from the identity on $X$ to a retraction map $r:X\to A$.  In more detail, 
there is a continuous family of continuous functions $\{\phi_t:X\to X\ |\ t\in [0,1]\}$ such that (1) $\phi_0$ is the identity on $X$, (2) for all $t\in [0,1]$ $\phi_t|_A$ is the identity on $A$, and 
(3) $\phi_1(X)\subset A$. We note that \cite{Hatcher} simply calls this a deformation retraction.

The following theorem is proved in \cite{Sch1} making reference to \cite{Ne}.

\begin{thm}[Schwarz-Neeman]\label{schwarzneeeman}
The composition $\mathcal{KN}\to V_G\to V_G\quot G$ is proper and induces a homeomorphism $\mathcal{KN}/K\to V_G\quot G$ where $V_G\quot G$ has the subspace topology induced from its equivariant affine embedding.  Moreover, there is a $K$-equivariant deformation retraction of $V_G$ to $\mathcal{KN}.$
\end{thm}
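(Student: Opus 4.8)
The plan is to run the Kempf--Ness argument, exploiting the Cartan decomposition $G=K\exp(\mathfrak{p})$ with $\mathfrak{p}=i\mathfrak{k}$, where $\mathfrak{k}$ is the Lie algebra of $K$. The key observation is that $K$-invariance of the Hermitian form makes every $\xi\in\mathfrak{p}$ act as a self-adjoint operator on $V$; hence, diagonalizing $\xi$ in an orthonormal eigenbasis with real eigenvalues $\lambda_j$ and writing $v=\sum_j v_je_j$, one gets $p_v(\exp(t\xi))=\sum_j|v_j|^2e^{2t\lambda_j}$, a sum of real exponentials and thus a convex function of $t$. This convexity is the engine of the whole proof. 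Computing $(dp_v)_{\id}(\xi)=2\,\mathrm{Re}\langle\xi\cdot v,v\rangle$ shows the $\mathfrak{k}$-directional derivatives vanish automatically by skew-adjointness, so $v\in\mathcal{KN}$ is equivalent to $\langle\xi\cdot v,v\rangle=0$ for all $\xi\in\mathfrak{p}$; this identifies $\mathcal{KN}=\mu^{-1}(0)\cap V_G$ for the moment map $\mu$ determined by $\langle\mu(v),\xi\rangle=\langle\xi\cdot v,v\rangle$.

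First I would prove surjectivity of $\mathcal{KN}\to V_G\quot G$. By standard reductive GIT each fiber of the quotient map contains a unique closed $G$-orbit, and by the quoted Kempf--Ness fact a nonzero $v$ lies on a closed orbit exactly when $p_v$ attains a minimum; if $g_0$ is a minimizer then it is a critical point, which via $p_v(gg_0)=p_{g_0\cdot v}(g)$ is equivalent to $g_0\cdot v\in\mathcal{KN}$. Hence $\mathcal{KN}$ meets every fiber. For injectivity of $\mathcal{KN}/K\to V_G\quot G$, suppose $v,w\in\mathcal{KN}$ with $w=g\cdot v$ and write $g=k\exp(\xi)$, $\xi\in\mathfrak{p}$; using $K$-invariance of $\mathcal{KN}$, both $v$ and $\exp(\xi)\cdot v$ lie in $\mathcal{KN}$, so the convex function $\phi(t)=p_v(\exp(t\xi))$ has $\phi'(0)=\phi'(1)=0$. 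A convex function with two critical points is constant between them, which forces $\lambda_j=0$ whenever $v_j\neq0$, i.e. $\exp(\xi)\cdot v=v$, so $w=k\cdot v$ and $v,w$ share a $K$-orbit.

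Next I would show the continuous bijection $\mathcal{KN}/K\to V_G\quot G$ is a homeomorphism by establishing properness of $\mathcal{KN}\to V_G\quot G$, which is the first assertion of the theorem. A $\mathcal{KN}$-point is norm-minimal in (the closure of) its orbit, so over a bounded region of the quotient the norm $\norm{\ }$ stays bounded on $\mathcal{KN}$; since $\norm{\ }$ is proper on $V$ and $V_G$ is closed, preimages of compact sets are compact. As $K$ is compact the induced map on $\mathcal{KN}/K$ is still proper, and a proper continuous bijection to a locally compact Hausdorff space is closed, hence a homeomorphism; this is compatible with the subspace topology on $V_G\quot G$ coming from the equivariant affine embedding.

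Finally, for the $K$-equivariant deformation retraction I would flow each point along its own orbit toward the minimal vector, using the vector field $v\mapsto-\mu(v)\cdot v$, where $\mu(v)\in\mathfrak{p}$ is the moment map above and $\mu(v)\cdot v$ is its infinitesimal action; up to reparametrization this is the negative gradient flow of $\norm{\ }^2$ in the orbit directions. Since $\mu$ is $K$-equivariant and the form $K$-invariant the flow is $K$-equivariant; since $V_G$ is $G$-invariant and the field is tangent to orbits the flow preserves $V_G$ and, a direct computation shows, preserves the fibers of the quotient, so the retraction induces the identity on $V_G\quot G$; and along each orbit the flow follows the convex $p_v$, so $\norm{\mu}^2$ decreases monotonically to $0$ and each trajectory limits onto $\mathcal{KN}=\mu^{-1}(0)$. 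I expect the main obstacle to be precisely this last point: proving that the flow converges and that the resulting limit map is continuous in the initial point, uniformly including non-closed orbits, whose trajectories must leave the orbit and limit onto the unique closed orbit in its closure. This is the genuinely hard analytic content, where the convexity and properness estimates assembled above are combined; it is the part carried out in \cite{Ne} and invoked in \cite{Sch1}.
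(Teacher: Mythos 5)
You should know at the outset that the paper does not prove this theorem at all: it is stated as a quotation of results of Schwarz \cite{Sch1}, which in turn rest on Neeman \cite{Ne}. So your proposal can only be measured against the standard Kempf--Ness argument in those references, which is indeed what you are sketching. The bijection half of your sketch is sound: the identification of $\mathcal{KN}$ with $\mu^{-1}(0)\cap V_G$ (the $\mathfrak{k}$-derivatives vanish by skew-adjointness, leaving only the $\mathfrak{p}$-directions), surjectivity via minimizers on closed orbits, and injectivity via convexity of $t\mapsto p_v(\exp(t\xi))$ are all correct. One step you leave implicit in injectivity: from $\pi(v)=\pi(w)$ you must first obtain $w\in G\cdot v$ before you may write $w=g\cdot v$; this requires observing that every point of $\mathcal{KN}$ lies on a closed orbit (critical point $\Rightarrow$ minimum $\Rightarrow$ closed orbit, by the facts from \cite{KN} quoted in the paper) together with uniqueness of the closed orbit in each fiber of $\pi$.

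The genuine gap is properness, which you dispatch in one clause: ``a $\mathcal{KN}$-point is norm-minimal in (the closure of) its orbit, so over a bounded region of the quotient the norm stays bounded on $\mathcal{KN}$.'' That ``so'' does not follow: norm-minimality is a statement within each fiber separately and gives no uniform bound as the fiber varies, and boundedness of the invariants at $v_n$ does not obviously bound $\norm{v_n}$. This properness is precisely one of the substantive points of \cite{Ne} and \cite{Sch1}, not a formality, and without it your continuous bijection $\mathcal{KN}/K\to V_G\quot G$ is not yet a homeomorphism. A correct short argument does exist, but it uses ingredients you never invoke: $\mu^{-1}(0)$ is a closed real \emph{cone} (its defining equations $\langle\xi\cdot v,v\rangle=0$ are homogeneous in $v$), and $\C[V]^G$ has homogeneous generators whose restrictions generate $\C[V_G]^G$. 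If $v_n\in\mathcal{KN}$ had $\pi(v_n)$ in a compact set but $\norm{v_n}\to\infty$, then $w_n=v_n/\norm{v_n}\in\mu^{-1}(0)$ would subconverge to some $w$ with $\norm{w}=1$ and $p(w)=\lim p(v_n)/\norm{v_n}^{\deg p}=0$ for every nonconstant homogeneous invariant $p$; thus $w$ would be a nonzero minimal vector in the null cone of $V$, yet minimal vectors lie on closed orbits and the only closed orbit in the null cone is $\{0\}$ --- a contradiction. Finally, for the deformation retraction you describe the right flow but, as you concede, convergence and continuity of the limit is exactly the analytic content of \cite{Ne}; on that point your proposal is a restatement of the paper's citation rather than a proof, which is defensible only because the paper itself proves nothing here.
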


Recall that all semi-algebraic sets (this includes varieties for obvious reasons) are cellular.  Precisely, we have the following theorem which can be found in \cite{BCR} on page 214.

\begin{thm}[Bochnak-Coste-Roy]\label{cellular}
Let $S$ be a closed and bounded semi-algebraic set.  Then given any finite family $\{S_i\}$ of semi-algebraic subsets of $S$, there exists a cellular decomposition of $S$ such that each $S_i$ is a sub-complex.
\end{thm}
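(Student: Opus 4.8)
The plan is to reduce the statement to the fundamental semi-algebraic triangulation theorem, whose engine is cylindrical algebraic decomposition. First I would collect every polynomial occurring in a description of $S$ and of each member $S_i$ of the finite family into a single finite set $\mathcal{P}\subset\R[x_1,\dots,x_n]$, where $S$ has been embedded as a bounded subset of some $\R^n$; since $S$ is closed and bounded it is compact, which will be essential at the end for finiteness. The goal of the main construction is a decomposition of $\R^n$ into finitely many semi-algebraic cells, each semi-algebraically homeomorphic to an open cube $(0,1)^{d}$, on each of which every polynomial of $\mathcal{P}$ has constant sign; this guarantees automatically that $S$ and each $S_i$ are unions of cells.

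The construction proceeds by induction on the number of variables $n$. For $n=1$ the real roots of the polynomials in $\mathcal{P}$ partition $\R$ into finitely many points and open intervals, with constant signs on each piece. For the inductive step I would project: using subresultant (or discriminant) theory I produce an auxiliary finite family $\mathcal{P}'$ in $\R[x_1,\dots,x_{n-1}]$ with the property that above each cell $C$ of a decomposition of $\R^{n-1}$ adapted to $\mathcal{P}'$, the number and order of the real roots of the polynomials of $\mathcal{P}$ (viewed in $x_n$ with parameters ranging over $C$) are constant. These roots then vary as continuous semi-algebraic sections over $C$, and the graphs of the sections together with the open sectors between them furnish the cells lying over $C$; each is semi-algebraically homeomorphic to a cube because $C$ is and the sections are continuous.

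The step I expect to be the main obstacle is passing from this cylindrical decomposition, which only refines sign conditions, to an honest cell complex satisfying the frontier condition --- that the closure of each cell be a union of cells of strictly smaller dimension --- and then to a compatible triangulation. The difficulty is that a raw cylindrical decomposition need not enjoy this property; the standard remedy is a generic linear change of coordinates placing $\mathcal{P}$ in \emph{good position} with respect to the last coordinate direction (a Thom--Milnor type genericity argument), after which one can inductively triangulate the base and cone off over it so that the simplices in the total space meet correctly along their faces. Carrying the compatibility with the family $\{S_i\}$ through this induction, while maintaining semi-algebraicity of all the homeomorphisms involved, is the technically delicate part. Once a finite simplicial complex homeomorphic to $S$ is produced in which each $S_i$ is realized as a subcomplex, compactness of $S$ forces the complex to be finite and the theorem follows; this is precisely the argument recorded in \cite{BCR}.
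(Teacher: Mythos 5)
The paper does not prove this theorem at all: it is imported as a known result from \cite{BCR} (page 214), so there is no internal argument to compare your proposal against. Your sketch faithfully reconstructs the cited source's own proof --- a cylindrical algebraic decomposition adapted to the polynomials defining $S$ and the $S_i$, a generic linear change of coordinates to repair the frontier condition, and an inductive triangulation-by-coning with compactness ensuring finiteness --- so it takes essentially the same route as the reference on which the paper relies.
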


We use this to establish that $W_K/K$ is generally a cellular sub-complex of the GIT quotient $V_G\quot G$.

\begin{thm}\label{cellularinclusion:general}
Let $G$ be a complex reductive group, and let $K$ be a maximal compact subgroup.  Let $V_G$ be a complex affine $G$-variety, and let $W_K\subset V_G$ be its real points. Assume further that $W_K$ is $K$-stable and compact. Then $W_K/K$ is canonically included in $V_G\quot G$ as a cellular sub-complex.
\end{thm}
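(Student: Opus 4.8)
The plan is to realize the claimed inclusion as the map induced by the composition $W_K\hookrightarrow V_G$ followed by the quotient $\pi_G\colon V_G\to V_G\quot G$. Since $\pi_G$ is $G$-invariant it is in particular $K$-invariant, so it descends to a \emph{canonical} continuous map $\bar\iota\colon W_K/K\to V_G\quot G$, $[w]_K\mapsto\pi_G(w)$, which involves no auxiliary choices; this is the asserted inclusion. The work is to show $\bar\iota$ is injective, that it is a closed topological embedding, and finally that its image is a subcomplex. Injectivity I would obtain by proving the (a priori stronger) statement that $W_K$ lies inside the Kempf-Ness set $\mathcal{KN}$, after which Theorem \ref{schwarzneeeman} does the rest.

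First I would fix the embedding carefully: embed $V_G$ equivariantly as a closed subvariety of a representation $G\to\GL(V)$ that is defined over $\R$ compatibly with the compact real form, so that $K$ acts through $\mathrm{O}(n,\R)\subset\U(V)$, the distinguished Hermitian form is $K$-invariant, and $W_K$ is carried into the real locus $V_\R$. With respect to such an embedding $\mathfrak{g}=\mathfrak{k}\oplus i\mathfrak{k}$, where $\mathfrak{k}$ acts by real skew-symmetric (hence skew-Hermitian) operators and $i\mathfrak{k}$ by Hermitian operators. The key step is then $W_K\subseteq\mathcal{KN}$: for $v\in W_K$ and $X\in\mathfrak{k}$ the operator $X$ is skew-Hermitian, so $\langle Xv,v\rangle$ is purely imaginary and contributes nothing to $(dp_v)_{\id}=2\,\mathrm{Re}\langle Xv,v\rangle$; for $X=iB\in i\mathfrak{k}$ with $B$ real skew-symmetric and $v$ real one has $\langle Xv,v\rangle=i\,v^{\mathsf{t}}Bv=0$, since $v^{\mathsf{t}}Bv=-v^{\mathsf{t}}Bv$. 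Hence $(dp_v)_{\id}=0$ and $v\in\mathcal{KN}$ (equivalently, $v$ is a minimal vector of its $G$-orbit, by convexity of $t\mapsto\norm{\exp(tX)\cdot v}^2$ along $i\mathfrak{k}$).

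The embedding then follows formally. The inclusion $W_K\hookrightarrow\mathcal{KN}$ is $K$-equivariant, and $K$-stability of $W_K$ guarantees that $K$-orbits through points of $W_K$ remain in $W_K$; thus it induces an injection $W_K/K\hookrightarrow\mathcal{KN}/K$. Composing with the Schwarz-Neeman homeomorphism $\mathcal{KN}/K\cong V_G\quot G$ of Theorem \ref{schwarzneeeman} recovers $\bar\iota$, so $\bar\iota$ is injective. As $W_K$ is compact, so is $W_K/K$, and a continuous injection from a compact space into the Hausdorff space $V_G\quot G$ is a homeomorphism onto its closed image; hence $\bar\iota$ is a closed topological embedding. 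For the cellular statement, Tarski-Seidenberg shows that $\bar\iota(W_K/K)=\pi_G(W_K)$ is a semi-algebraic subset of the semi-algebraic set $V_G\quot G$, and it is closed and bounded because $W_K$ is compact. Applying Theorem \ref{cellular} to a closed bounded semi-algebraic set containing this image (equivalently, the general semi-algebraic triangulation with $\pi_G(W_K)$ as a distinguished subset) yields a cellular decomposition in which $W_K/K$ is a subcomplex.

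I expect the main obstacle to be the key step $W_K\subseteq\mathcal{KN}$, and more precisely the normalization preceding it: arranging one embedding in which the $K$-invariant Hermitian form and the real structure cutting out $W_K$ are \emph{simultaneously} adapted to $K\subseteq\mathrm{O}(n,\R)$. Once $\mathfrak{k}$ is realized by real skew-symmetric matrices, the vanishing of $(dp_v)_{\id}$ is a one-line computation; but without this compatibility real points need not be minimal vectors (an incompatible real structure can place real points in non-closed orbits), so the hypotheses that $W_K$ be $K$-stable and arise as the real locus of the compact form are exactly what make the argument run.
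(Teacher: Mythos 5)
Your proposal is correct, but it reaches the conclusion by a genuinely different route than the paper. The paper never touches the Kempf--Ness set here: it argues purely invariant-theoretically, showing via the unitary trick that $\R[W_K]^K\otimes\C=\C[V_G]^G$, choosing a single generating set $\{p_1,\dots,p_N\}$ of $\R[W_K]^K$ that also generates $\C[V_G]^G$, and then invoking Schwarz's facts that the resulting polynomial map $P$ satisfies $P(V_G)=V_G\quot G$ and $P(W_K)=W_K/K$; the inclusion is then literally an inclusion of images, $W_K/K=P(W_K)\subset P(V_G)=V_G\quot G$, with semi-algebraicity of $W_K/K$ supplied by \cite{PS} and cellularity by Theorem \ref{cellular}. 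You instead define the canonical map abstractly, prove the sharper statement $W_K\subseteq\mathcal{KN}$ by a first-variation computation in a real-compatible equivariant embedding, and deduce injectivity from the Schwarz--Neeman homeomorphism $\mathcal{KN}/K\cong V_G\quot G$ of Theorem \ref{schwarzneeeman}. Your route buys more: it shows every point of $W_K$ is a minimal vector, hence has closed $G$-orbit, and it is in effect a generalization of the paper's own appendix computation (Proposition \ref{kempfnesssection}, which proves $\fnc_K(Q)\subset\mathcal{KN}_Q$ in the quiver case) to arbitrary pairs $(V_G,W_K)$; it is also manifestly canonical from the outset. The paper's route buys explicit coordinates realizing both quotients simultaneously and avoids any metric normalization or differential computation. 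Note that the normalization you flag as the main obstacle --- compatibility of the real structure with the $K$-action --- is equally (and equally implicitly) needed by the paper, whose manipulation $(\R[W_K]\otimes\C)^K=\C[V_G]^K$ presupposes that $K$ acts on $\R[W_K]$; and both arguments apply Theorem \ref{cellular} with the same mild looseness, since $V_G\quot G$ itself need not be bounded. So your proposal sits at the same level of rigor as the paper while following a different, somewhat stronger, line of attack.
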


\begin{proof}
Since $W_K$ is a the set of real points of $V_G$ there exists a set of generators and relations for the coordinate ring of $W_K$ so that the complex zeros of those relations equals $V_G$.
This implies $$\C[V_G]=\R[W_K]\otimes \C\supset \R[W_K]\otimes \R=\R[W_K].$$  We thus have 
$$\C[V_G]^{G}=\C[V_G]^{K}\supset \R[W_K]^{K},$$ where the first equality follows from the 
``unitary trick'' (see \cite{Do}, or \cite{Sch3}).  Note that $$\R[W_K]^{
K}\otimes \C=(\R[W_K]\otimes \C)^{
K}=\C[V_G]^{K}=\C[V_G]^{G}.$$  Therefore
there exists a generating set $\{p_1,...,p_N\}$ for $\R[W_K]^{K}$ which is also a generating set for $\C[V_G]^{G}$.
Let $P=(p_1,...,p_N)$ be the corresponding polynomial mapping, and note the generators of each invariant ring determine closed orbits. 
Then $P(V_G)=V_G\quot G$, and also $P(W_K)=W_K/K$ (see \cite{Sch1}).  This in turn 
implies that $W_K/K=P(W_K)\subset P(V_G)= V_G\quot G$.

By \cite{PS} we know $W_K/K$ is semi-algebraic.  Since we have assumed that $W_K$ and $K$ are compact, we know $W_K/K$ is compact and thus closed.  We just showed that $W_K/K$ is canonically included in $V_G\quot G$ by a polynomial mapping.  Thus, $W_K/K$ may be considered as a semi-algebraic subset of $V_G\quot G$ (it is semi-algebraic since it is a variety).  Therefore, Theorem \ref{cellular} implies that $W_K/K$ is a sub-complex of $V_G\quot G$, as required.

\end{proof}

We now are in a position to prove an important tool in analyzing the topology of moduli spaces.

\begin{thm}\label{kempnesscorollary}
Let $G$ be a complex reductive group, and let $K$ be a maximal compact subgroup.  Let $V_G$ be a complex affine $G$-variety, and let $W_K\subset V_G$ be its real points. Assume further that $W_K$ is $K$-stable and compact.  If $W_K$ is a subset of the Kempf-Ness set, and there exists a $K$-equivariant strong deformation retraction of $V_G$ onto $W_K$, then $V_G\quot G$ strongly deformation retracts onto $W_K/K$. 
\end{thm}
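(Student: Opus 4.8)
The plan is to reduce everything to the Kempf--Ness model of the quotient and then descend a homotopy through the compact group $K$. The obstruction to simply pushing the given retraction down to $V_G\quot G$ is that it is only $K$-equivariant, whereas $\pi\colon V_G\to V_G\quot G$ collapses entire $G$-orbits, so the retraction need not respect the fibers of $\pi$. The remedy is to work on the Kempf--Ness set $\mathcal{KN}$, where by Theorem~\ref{schwarzneeeman} the quotient map induces a homeomorphism $\mathcal{KN}/K\cong V_G\quot G$ and only the compact group $K$ is in play.

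The first, and key, step is to show $W_K\subseteq\mathcal{KN}$. We may assume the fixed $K$-invariant Hermitian form $\langle\,,\,\rangle$ restricts to a real inner product on the real form $V_\R\supseteq W_K$. For $w\in W_K$ and $\xi\in\mathfrak{g}$ I would compute $(dp_w)_{\id}(\xi)=2\,\mathrm{Re}\langle\xi\cdot w,w\rangle$. Writing $\xi=\eta_1+i\eta_2$ with $\eta_1,\eta_2\in\mathfrak{k}$ in the Cartan decomposition $\mathfrak{g}=\mathfrak{k}\oplus i\mathfrak{k}$, the vectors $\eta_j\cdot w$ lie in $V_\R$, so $\langle\eta_1\cdot w,w\rangle$ is real; and since $K$ acts by isometries, $\mathfrak{k}$ acts by skew-Hermitian operators, forcing $\mathrm{Re}\langle\eta_1\cdot w,w\rangle=0$. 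Hence $\langle\eta_1\cdot w,w\rangle=0$ and $\mathrm{Re}\langle\xi\cdot w,w\rangle=0$ for all $\xi$, so $(dp_w)_{\id}=0$ and $w\in\mathcal{KN}$. In particular $W_K/K\subseteq\mathcal{KN}/K$, and under the homeomorphism of Theorem~\ref{schwarzneeeman} this matches the inclusion $W_K/K\subseteq V_G\quot G$ furnished by Theorem~\ref{cellularinclusion:general}.

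Next I would manufacture a $K$-equivariant strong deformation retraction of $\mathcal{KN}$ (not of all of $V_G$) onto $W_K$. Let $\phi_t\colon V_G\to V_G$ be the given $K$-equivariant strong deformation retraction onto $W_K$, and let $R=\psi_1\colon V_G\to\mathcal{KN}$ be the time-one retraction of the Kempf--Ness deformation retraction of Theorem~\ref{schwarzneeeman}, so that $R|_{\mathcal{KN}}=\mathrm{id}$ and, because $W_K\subseteq\mathcal{KN}$, also $R|_{W_K}=\mathrm{id}$. Define $\Theta_t:=R\circ(\phi_t|_{\mathcal{KN}})\colon\mathcal{KN}\to\mathcal{KN}$. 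Then $\Theta_0=R|_{\mathcal{KN}}=\mathrm{id}$; for $w\in W_K$ we get $\Theta_t(w)=R(\phi_t(w))=R(w)=w$; and $\Theta_1(\mathcal{KN})\subseteq R(\phi_1(V_G))\subseteq R(W_K)=W_K$. Thus $\Theta_t$ is a strong deformation retraction onto $W_K$, and it is $K$-equivariant since $R$ and each $\phi_t$ are.

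Finally I would descend $\Theta_t$ through the compact group. Since $K$ is compact, the orbit map $\mathcal{KN}\to\mathcal{KN}/K$ is a closed, proper quotient map, and it remains one after taking the product with $[0,1]$; hence the $K$-invariant homotopy $(x,t)\mapsto[\Theta_t(x)]$ descends to a continuous homotopy $\bar{\Theta}_t\colon\mathcal{KN}/K\to\mathcal{KN}/K$, which by the previous step is a strong deformation retraction onto $W_K/K$. Transporting $\bar{\Theta}_t$ across the homeomorphism $\mathcal{KN}/K\cong V_G\quot G$ of Theorem~\ref{schwarzneeeman} (which carries $W_K/K$ onto its copy inside $V_G\quot G$, as $\pi(W_K)=W_K/K$) yields the desired strong deformation retraction of $V_G\quot G$ onto $W_K/K$. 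I expect the main obstacle to be precisely the observation that the given retraction cannot be descended directly, necessitating the passage to $\mathcal{KN}$ and the post-composition with $R$, together with the verification $W_K\subseteq\mathcal{KN}$ that makes $R$ fix $W_K$ pointwise; the remaining continuity and quotient-descent points are routine given the compactness of $K$.
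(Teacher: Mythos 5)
Your proof is correct, but it takes a genuinely different route from the paper's. The paper's own argument is soft: it uses Theorem \ref{schwarzneeeman} only to conclude that $V_G\quot G\cong\mathcal{KN}/K$ is homotopy equivalent to $V_G/K$, uses the hypothesis to conclude $V_G/K$ is homotopy equivalent to $W_K/K$, and thus that the canonical inclusion $W_K/K\hookrightarrow V_G\quot G$ induces isomorphisms on all homotopy groups; it then invokes Theorem \ref{cellularinclusion:general} (resting on the Bochnak--Coste--Roy semi-algebraic cell decomposition) to realize $W_K/K$ as a sub-complex of a CW structure on $V_G\quot G$, and finishes with Whitehead's theorem, which converts the weak equivalence of a CW pair into a strong deformation retraction. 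You instead build the retraction by hand: you prove $W_K\subseteq\mathcal{KN}$, compose the given retraction $\phi_t$ with the Kempf--Ness retraction $R$ to get a $K$-equivariant strong deformation retraction $\Theta_t=R\circ\phi_t|_{\mathcal{KN}}$ of $\mathcal{KN}$ onto $W_K$, descend through the compact group, and transport across the Schwarz--Neeman homeomorphism. What your approach buys is constructivity and economy of machinery: no cellularity, no Whitehead, and an essentially explicit homotopy. What it costs is the step the paper's main proof deliberately avoids: the inclusion $W_K\subseteq\mathcal{KN}$ is not automatic for an arbitrary equivariant embedding and Hermitian form, so your ``we may assume'' needs justification --- one must choose the embedding defined over $\R$ (possible because $\C[V_G]=\R[W_K]\otimes_\R\C$ and a $K$-stable real generating subspace complexifies to a $G$-stable one, $K$ being Zariski-dense in $G$) and take the Hermitian form to be the extension of a $K$-averaged real inner product on the real form; this is the real Kempf--Ness setup of Richardson--Slodowy. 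Notably, the paper itself carries out exactly this kind of verification, but only in the appendix and only for the quiver case ($\fnc_K(Q)\subset\mathcal{KN}_Q$, followed by an ``alternative proof'' of Theorem \ref{defretractthm} that descends $\Phi_t$ through a section $\sigma$), so your proposal can be read as promoting the paper's appendix strategy to the general theorem, with the projection $\pi_{G/K}$ there replaced by post-composition with the retraction $R$ here.
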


\begin{proof}
Theorem \ref{schwarzneeeman} implies $V_G/K$ deformation retracts to $\mathcal{KN}/K\cong V_G\quot G$.  Thus $V_G\quot G$ is homotopy equivalent to $V_G/K$.  On the other hand, our hypothesis implies $V_G/K$ deformation retracts onto $W_K/K$; again they are homotopy equivalent.  Thus, $V_G\quot G$ is homotopy equivalent to $W_K/K$ via the canonical inclusion mapping from Theorem \ref{cellularinclusion:general}.  In particular, this implies, since $W_K\subset \mathcal{KN}$, that the inclusion mapping induces isomorphisms $\pi_m(V_G\quot G)\cong \pi_m(W_K/K)$ for all $m\geq 0$.

However, Theorem \ref{cellularinclusion:general} tells that $W_K/K$ is a sub-complex of the cellular complex $V_G\quot G$.  Thus, Whitehead's Theorem (see \cite{Hatcher}, page 346) implies that $V_G\quot G$ strongly deformation retracts onto $W_K/K$.

\end{proof}

\subsection{Cellular Inclusion Theorem}
Let $K$ is a compact Lie group and $G=K_\C$.  We may assume $K\subset \mathrm{O}(n,\R)$ is a real affine variety, by the Peter-Weyl theorem;
and thus $G\subset \mathrm{O}(n,\C)$.  Our goal is to now prove that $\mathcal{F}_G(Q)\quot\mathcal{G}_{G}$ deformation retracts onto
$\mathcal{F}_K(Q)/\mathcal{G}_{K}$; we first prove it is canonically embedded.

\begin{prop}\label{kmodsubsetgmod}
Let $Q$ be any quiver, and let $R$ be a set of relations.
Then $\fnc_K(Q,R)/\mathcal{G}_K$ is canonically included as a cellular sub-complex of $\fnc_G(Q,R)\quot\mathcal{G}_G$.
\end{prop}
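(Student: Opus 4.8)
The plan is to deduce this directly from Theorem \ref{cellularinclusion:general} by matching the general setup to the present situation. Concretely, I would apply that theorem with the reductive group taken to be the gauge group $\gauge_G = G^{N_V}$, the complex affine variety taken to be $\fnc_G(Q,R)$, the maximal compact subgroup taken to be $\gauge_K = K^{N_V}$, and the compact real points taken to be $\fnc_K(Q,R)$. Once the hypotheses of Theorem \ref{cellularinclusion:general} are verified under this dictionary, its conclusion is exactly the assertion that $\fnc_K(Q,R)/\gauge_K$ includes canonically into $\fnc_G(Q,R)\quot\gauge_G$ as a cellular sub-complex. So the whole proof reduces to checking the three structural hypotheses: that $\gauge_G$ is reductive with maximal compact $\gauge_K$, that $\fnc_K(Q,R)$ is the set of real points of $\fnc_G(Q,R)$, and that $\fnc_K(Q,R)$ is $\gauge_K$-stable and compact.

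First I would check that the gauge group is a complex reductive group with $\gauge_K$ as a maximal compact subgroup. Since $G = K_\C$, we have $\gauge_G = G^{N_V} = (K_\C)^{N_V} = (K^{N_V})_\C$, which is the complexification of the compact group $\gauge_K = K^{N_V}$; hence $\gauge_G$ is reductive in the sense of Section 2, with maximal compact subgroup $\gauge_K$. That $\fnc_G(Q,R)$ is a complex affine $\gauge_G$-variety with algebraic action was already recorded in Section 2: the relations cut out a closed subvariety of $\fnc_G(Q)\cong G^{N_A}$, and the gauge action is polynomial in each factor.

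Next I would identify $\fnc_K(Q,R)$ as the real points of $\fnc_G(Q,R)$. The complexification $G=K_\C$ carries the antiholomorphic involution whose fixed locus is $K$, and this induces the product real structure on $G^{N_A}$ with fixed locus $K^{N_A}=\fnc_K(Q)$. Because each defining relation $f(p)=f(a_k)\cdots f(a_1)=I$ is a matrix-product equation with integer coefficients, the involution restricts to the subvariety $\fnc_G(Q,R)$, and its fixed locus there consists precisely of the representations taking values in $K$ and satisfying the relations, namely $\fnc_K(Q,R)$. Stability of $\fnc_K(Q,R)$ under $\gauge_K$ follows from Proposition \ref{pro:action-preserved} (the $K$-valued gauge action visibly preserves $K$-values), and compactness follows since $\fnc_K(Q)=K^{N_A}$ is compact and $\fnc_K(Q,R)$ is closed in it, being the common zero locus of the relations. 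With all hypotheses in place, Theorem \ref{cellularinclusion:general} yields the proposition.

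The step I expect to require the most care is the real-points identification, i.e.\ confirming that the real locus of the relation variety $\fnc_G(Q,R)$ is \emph{exactly} $\fnc_K(Q,R)$ and nothing larger. This rests on the observation that the cycle relations are defined over $\R$ (indeed over $\mathbb{Z}$), so the real structure restricts to $\fnc_G(Q,R)$ and commutes with passage to the relation subvariety; given this, the remaining verifications of reductivity, stability, and compactness are formal.
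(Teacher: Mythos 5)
Your proposal follows essentially the same route as the paper: the paper's own proof likewise reduces the proposition to Theorem \ref{cellularinclusion:general}, observing that $G=K_\C$ gives $\fnc_G(Q)=\fnc_K(Q)_\C$ and hence $\fnc_G(Q,R)=\fnc_K(Q,R)_\C$, so that $\fnc_K(Q,R)$ (compact and $\gauge_K$-stable) sits as the real points of the affine $\gauge_G$-variety $\fnc_G(Q,R)$ acted on by the reductive group $\gauge_G=(\gauge_K)_\C$ with maximal compact $\gauge_K$. Your more explicit verifications---reductivity of the gauge group, the involution argument identifying the real locus, stability via Proposition \ref{pro:action-preserved}, and compactness---are precisely the details the paper leaves implicit, so the two proofs coincide in substance.
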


\begin{proof}
There exists a set of generators and relations for the coordinate ring of $K$ so that the complex zeros of those relations equals $G$ (see \cite{Sch1}).
This implies $\fnc_G(Q)=G^{N_A}=(K_\C^{N_A})=(K^{N_A})_\C=\fnc_K(Q)_\C,$ which in turn implies that $\fnc_G(Q,R)=\fnc_K(Q,R)_\C.$  Thus, we are in the setting of Theorem \ref{cellularinclusion:general}, which provides our desired result.
\end{proof}

Denote the inclusion mapping $\iota$ which, as the proof shows, is defined by sending a $\gauge_K$-orbit to the $\gauge_G$-orbit (uniquely) determined by
any representative from the $\gauge_K$-orbit.  As it is given by polynomials, it is clearly continuous.

\begin{cor}\label{injection}
Any two $K$-valued quiver representations that are $\mathcal{G}_G$-equivalent are also $\mathcal{G}_K$-equivalent.
\end{cor}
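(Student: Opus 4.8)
The plan is to read this corollary as nothing more than the injectivity of the canonical inclusion $\iota$ produced by Proposition \ref{kmodsubsetgmod}, re-expressed at the level of representations rather than orbits. Recall that two $K$-valued representations $f,f'\in\fnc_K(Q,R)$ are $\gauge_G$-equivalent precisely when they lie in a common $\gauge_G$-orbit, i.e.\ when they determine the same point of $\fnc_G(Q,R)\quot\gauge_G$; they are $\gauge_K$-equivalent precisely when they determine the same point of $\fnc_K(Q,R)/\gauge_K$. Since $\iota$ sends the $\gauge_K$-orbit $[f]$ to the $\gauge_G$-orbit of any of its representatives, the hypothesis gives $\iota([f])=\iota([f'])$, and the desired conclusion $[f]=[f']$ is exactly the assertion that $\iota$ is one-to-one.

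Thus the real work is to extract injectivity of $\iota$ from the preceding results. First I would invoke Proposition \ref{kmodsubsetgmod} directly: it states that $\fnc_K(Q,R)/\gauge_K$ is \emph{included} in $\fnc_G(Q,R)\quot\gauge_G$, and an inclusion of sets is by definition injective, so strictly speaking the corollary is immediate. To make transparent \emph{why} the inclusion is injective, I would trace back through Theorem \ref{cellularinclusion:general}, on which Proposition \ref{kmodsubsetgmod} rests. There the inclusion is realized by a single polynomial map $P=(p_1,\dots,p_N)$ whose components simultaneously generate $\R[W_K]^{K}$ and $\C[V_G]^{G}$ (here $W_K=\fnc_K(Q,R)$ and $V_G=\fnc_G(Q,R)$), and one has $P(W_K)=W_K/K$ together with $P(V_G)=V_G\quot G$. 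Injectivity of $\iota$ is then the statement that $P$ separates the points of $W_K/K$, which holds because the invariants of the compact group $\gauge_K$ separate its orbits; equivalently, two points of the compact set $\fnc_K(Q,R)$ that agree on every $p_i$ lie in the same $\gauge_K$-orbit.

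The only genuine subtlety, and hence the main obstacle, is the compatibility already dispatched in the proof of Theorem \ref{cellularinclusion:general}: that the map $\iota$ induced by the $\gauge_G$-invariants agrees, on $\fnc_K(Q,R)$, with the map induced by the $\gauge_K$-invariants, so that ``same image in the $G$-quotient'' really does force ``same $\gauge_K$-orbit'' rather than merely ``same closed $\gauge_G$-orbit.'' This is precisely what the identity $\R[W_K]^{K}\otimes\C=\C[V_G]^{G}$ guarantees, and it is here that compactness of $\fnc_K(Q,R)=K^{N_A}$ is used, ensuring that every $\gauge_K$-orbit is closed and therefore detected by the common invariants. With this compatibility in hand the corollary follows formally from the injectivity of $\iota$ as above.
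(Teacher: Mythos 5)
Your proposal is correct and takes essentially the same route as the paper: the paper's entire proof is the observation that a failure of the corollary would contradict the injectivity of the canonical inclusion $\iota$ from Proposition \ref{kmodsubsetgmod}. Your further unwinding of why $\iota$ is injective (the polynomial map $P$ of Theorem \ref{cellularinclusion:general} and the fact that invariants of the compact group $\gauge_K$ separate its orbits on the compact set $\fnc_K(Q,R)$) is exactly the content implicit in the paper's citation of that proposition.
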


\begin{proof}
If not the mapping $\iota$ from Proposition \ref{kmodsubsetgmod} would not be injective. 
\end{proof}

\begin{rem}
The above corollary generalizes the analogous statement in \cite{FlLa} for character varieties (see Remark \ref{charvarremark}).  In that article we gave two proofs of the character variety version of Corollary \ref{injection}.  We now take a moment to correct the one in the appendix.  Note that the polar decomposition is unique (else it would not give a diffeomorphism).  If two $K$-valued representations, $\rho$ and $\phi$, of finitely generated groups are $G$-conjugate, then there exists a $g\in G$ such that for any word $w$, $k:=\phi(w)=g\rho(w) g^{-1}:=gk_1g^{-1}$. Then $k=gk_1g^{-1}=k_2e^pk_1e^{-p}k_2^{-1}$ which implies $$k_3:=k_2^{-1}kk_2=e^pk_1e^{-p}=k_1e^{ad_{k_1^{-1}}p}e^{-p}:=k_1k_4e^{p_1}.$$  Thus,
$k_4^{-1}k_1^{-1}k_3=e^{p_1}$ which implies $e^{p_1}=I$ which implies $e^{ad_{k_1^{-1}}p}=k_4e^p$ which implies $k_4=I$ and $e^{ad_{k_1^{-1}}p}=e^p$.  Therefore, $k_3=k_1$ which implies $k=k_2k_1k_2^{-1}$.  In conclusion, $G$-conjugate $K$-representations of finitely generated groups are $K$-equivalent.
\end{rem}

\subsection{Strong Deformation Retraction of Quiver Moduli}
All quivers we consider in this section are connected with at least one arrow, which implies, for instance, that each vertex is incident with at least one arrow.

We define an equivalence relation on quivers.  Two quivers are said to be arrow-equivalent if and only if there exists a a bijection between their sets
of arrows.  Clearly this is an equivalence relation since composition of bijections is transitive, invertible, and reflexive.

We define the pinching mapping on a quiver to be the mapping that takes two vertices and identifies them without otherwise changing the arrow.  
This operation preserves the representations spaces but kills one factor in the gauge group.  The resulting quiver is arrow-equivalent to the
first.

\begin{lem}\label{pinchlemma}
Let $K$ be a compact Lie group and let $K_\C$ be its complexification.  Let $(Q_1,R_1), (Q_2,R_2)$, and $(Q_3,R_3)$ be arrow-equivalent quivers with relations.  We make the following three additional assumptions:
\begin{enumerate}
\item[(a)] $\fnc_{K_\C}(Q_1,R_1)\subset \fnc_{K_\C}(Q_2,R_2)$,
\item[(b)] there exists a strong deformation
retraction $\Phi_t$ that is $\gauge_{K}(Q_3)$-equivariant from $\fnc_{K_\C}(Q_2,R_2)$ onto $\fnc_{K}(Q_2,R_2)$,
\item[(c)] for all time $t$, $\Phi_t(\fnc_{K_\C}(Q_1,R_1))\subset \fnc_{K_\C}(Q_1,R_1)$.  
\end{enumerate}
Then
\begin{enumerate}
\item $\fnc_{K_\C}(Q_1,R_1)$ and $\fnc_{K_\C}(Q_2,R_2)$ are $\gauge_{K_\C}(Q_3)$-stable,
\item $\Phi_t(\fnc_{K}(Q_1,R_1))=\fnc_K(Q_1,R_1)$ for all $t\in [0,1]$,
\item $\Phi_t|_{\fnc_{K_\C}(Q_1,R_1)}$ is a $\gauge_K(Q_3)$-equivariant strong deformation retraction onto $\fnc_{K}(Q_1,R_1)$.
\end{enumerate}

\end{lem}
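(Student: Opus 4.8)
The plan is to prove the three statements in the order (1), (2), (3), assembling (3) from the first two. The organizing observation is that, because the three quivers are arrow-equivalent, all the representation spaces sit canonically inside the common ambient space $K_\C^{N_A}$, with real points cut out by intersecting with $K^{N_A}$. Combined with the hypothesis $\fnc_{K_\C}(Q_1,R_1)\subset \fnc_{K_\C}(Q_2,R_2)$, this yields the identity $\fnc_K(Q_1,R_1)=\fnc_{K_\C}(Q_1,R_1)\cap \fnc_K(Q_2,R_2)$, and in particular $\fnc_K(Q_1,R_1)\subset \fnc_K(Q_2,R_2)$, which I use repeatedly.

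For conclusion (1) I treat the two varieties separately. Since $Q_3$ is produced from $Q_1$ by pinching vertices, pulling a gauge transformation back along the vertex identification embeds $\gauge_{K_\C}(Q_3)$ as a subgroup of $\gauge_{K_\C}(Q_1)$ acting in exactly the same way on $K_\C^{N_A}$; Proposition \ref{pro:action-preserved} applied to $Q_1$ then shows this subgroup preserves $\fnc_{K_\C}(Q_1,R_1)$, giving its $\gauge_{K_\C}(Q_3)$-stability outright. For $\fnc_{K_\C}(Q_2,R_2)$ the hypothesized equivariant homotopy only presupposes a $\gauge_K(Q_3)$-action on it, i.e.\ real stability. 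To upgrade to the complexification I use Zariski density: the set $\{g\in\gauge_{K_\C}(Q_3): g\cdot \fnc_{K_\C}(Q_2,R_2)\subset \fnc_{K_\C}(Q_2,R_2)\}$ is Zariski closed, being an intersection of the closed conditions $g\cdot v\in \fnc_{K_\C}(Q_2,R_2)$ as $v$ ranges over that closed variety, and it contains $\gauge_K(Q_3)=K^{N_V(Q_3)}$, which is Zariski dense in $\gauge_{K_\C}(Q_3)=K_\C^{N_V(Q_3)}$ because $K$ is Zariski dense in $K_\C$. Hence it is all of $\gauge_{K_\C}(Q_3)$, proving the complex stability of $\fnc_{K_\C}(Q_2,R_2)$.

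Conclusion (2) is then formal. By the defining properties of a strong deformation retraction, $\Phi_t$ restricts to the identity on its target $\fnc_K(Q_2,R_2)$ for every $t\in[0,1]$. Since $\fnc_K(Q_1,R_1)\subset \fnc_K(Q_2,R_2)$ by the identity in the first paragraph, $\Phi_t$ is the identity on $\fnc_K(Q_1,R_1)$ for all $t$, so $\Phi_t(\fnc_K(Q_1,R_1))=\fnc_K(Q_1,R_1)$.

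For conclusion (3), the hypothesis that $\Phi_t(\fnc_{K_\C}(Q_1,R_1))\subset \fnc_{K_\C}(Q_1,R_1)$ for all $t$ makes $\Phi_t|_{\fnc_{K_\C}(Q_1,R_1)}$ a well-defined continuous self-homotopy of $\fnc_{K_\C}(Q_1,R_1)$; it is the identity at $t=0$ and fixes $\fnc_K(Q_1,R_1)$ pointwise for all $t$ by (2). The remaining retraction axiom follows from $\Phi_1(\fnc_{K_\C}(Q_1,R_1))\subset \fnc_{K_\C}(Q_1,R_1)\cap \Phi_1(\fnc_{K_\C}(Q_2,R_2))\subset \fnc_{K_\C}(Q_1,R_1)\cap \fnc_K(Q_2,R_2)=\fnc_K(Q_1,R_1)$, so the restriction lands in $\fnc_K(Q_1,R_1)$ at time $1$. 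Equivariance is inherited: $\fnc_{K_\C}(Q_1,R_1)$ is $\gauge_K(Q_3)$-stable by (1), so restricting the $\gauge_K(Q_3)$-equivariant map $\Phi_t$ to it remains $\gauge_K(Q_3)$-equivariant. I expect the only genuine obstacle to be conclusion (1): the homotopy axioms in (2) and (3) are purely formal once the ambient retraction and the inclusion of real points are in hand, whereas (1) requires both the pinching description of the $\gauge_{K_\C}(Q_3)$-action and the Zariski-density upgrade from real to complex stability.
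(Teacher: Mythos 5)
Your items (2) and (3) follow the paper's own argument essentially verbatim: the containment $\fnc_K(Q_1,R_1)\subset\fnc_K(Q_2,R_2)$ obtained by intersecting the hypothesis $\fnc_{K_\C}(Q_1,R_1)\subset\fnc_{K_\C}(Q_2,R_2)$ with $K^{N_A}$, the fact that $\Phi_t$ is the identity on its target $\fnc_K(Q_2,R_2)$, and, for the retraction axiom at $t=1$, the chain $\Phi_1(\fnc_{K_\C}(Q_1,R_1))\subset \fnc_{K_\C}(Q_1,R_1)\cap \fnc_K(Q_2,R_2)=\fnc_K(Q_1,R_1)$. No comment is needed there.

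Item (1) is where you and the paper part ways. The paper dispatches (1) in one sentence: after using arrow-equivalence to identify all the representation spaces inside $K_\C^{N_A}$, it asserts that ``the gauge groups generally preserve relations.'' Your two arguments are sharper, but they cut in different directions. The Zariski-density upgrade for $\fnc_{K_\C}(Q_2,R_2)$ --- real stability is implicit in the equivariance hypothesis, the stabilizing set $\{g\in\gauge_{K_\C}(Q_3)\ |\ g\cdot\fnc_{K_\C}(Q_2,R_2)\subset\fnc_{K_\C}(Q_2,R_2)\}$ is Zariski closed, and $\gauge_K(Q_3)$ is Zariski dense in $\gauge_{K_\C}(Q_3)$ --- is correct and is an argument the paper does not supply. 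However, your argument for $\fnc_{K_\C}(Q_1,R_1)$ invokes the assumption that $Q_3$ is produced from $Q_1$ by pinching vertices, which is nowhere among the lemma's hypotheses: only arrow-equivalence is assumed. This matters, because under mere arrow-equivalence conclusion (1) is genuinely false. For instance, let $Q_1$ have one vertex and two loops $a,b$ with $R_1=\{ba\}$, let $Q_2$ be the same quiver with $R_2=\emptyset$, and let $Q_3$ have two vertices with both arrows running from the first to the second. The component-wise polar retraction satisfies every hypothesis of the lemma (its $\gauge_K(Q_3)$-equivariance is exactly the computation in the proof of Corollary \ref{defretractthm}, and it preserves the locus $\{f(b)f(a)=I\}$ because $\phi_t(A^{-1})=\phi_t(A)^{-1}$), yet acting by $(I,g_w)$ with $g_w^2\neq I$ moves $(I,I)$ out of that locus, so $\fnc_{K_\C}(Q_1,R_1)$ is not even $\gauge_K(Q_3)$-stable. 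So your instinct that (1) requires the pinching structure, and not just arrow-equivalence, is exactly right --- this is how the lemma is actually used in Theorem \ref{pinchtheorem}, where $Q_3=Q_1$ is the pinched quiver, so that $\gauge(Q_3)=\gauge(Q_1)$ and $\gauge(Q_3)\subset\gauge(Q_2)$ act compatibly, and Proposition \ref{pro:action-preserved} applies --- and the paper's one-line proof tacitly relies on the same compatibility. But in a self-contained proof you must state this as an additional standing hypothesis (or note that the lemma is only true in that situation) rather than use it silently; as written, your proof establishes a corrected version of the lemma, not the lemma's literal statement.
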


\begin{proof}
Item (1): This result is simply a technical observation highlighting that the hypotheses of the theorem are sensible.  For any group, whenever $Q_1$ and $Q_2$ are arrow equivalent, $\fnc_G(Q_1)=\fnc_G(Q_2)$.  Consequently 
$\gauge_G(Q_i)$ acts on $\fnc_G(Q_j)$ for any choices of $i,j\in\{1,2,3\}$.   Since the gauge groups generally preserve relations, the result follows.

Item (2): $\fnc_K(Q_1,R_1)$ are the representations that satisfy the relations $R_1$ and have values in $K$.  Since 
$\fnc_{K_\C}(Q_1,R_1)\subset \fnc_{K_\C}(Q_2,R_2)$ we conclude that $\fnc_K(Q_1,R_1)\subset \fnc_K(Q_2,R_2)$.  However, $\Phi_t=\mathrm{id}$ on
$\fnc_K(Q_2,R_2)$, and therefore,  $\Phi_t(\fnc_{K}(Q_1,R_1))=\fnc_K(Q_1,R_1)$.

Item (3): $\gauge_K(Q_3)$-equivariance and continuity follows since the map is equivariant and continuous on all of $\fnc_{K_\C}(Q_2,R_2)$.  
Since $\Phi_0$ is the identity on $\fnc_{K_\C}(Q_2,R_2)$, it is the identity on the subset $\fnc_{K_\C}(Q_1,R_1)$.  The hypothesis 
$\Phi_t(\fnc_{K_\C}(Q_1,R_1))\subset \fnc_{K_\C}(Q_1,R_1)$ implies
we have a continuous family of self mappings for all $t$ that starts at the identity, and is $\gauge_K(Q_3)$-equivariant. Item 2. shows that $\Phi_t$ is
the identity on $\fnc_K(Q_1,R_1)$.  Lastly, note that $\Phi_1(\fnc_{K_\C}(Q_2,R_2))\subset \fnc_K(Q_2,R_2)$ by assumption.  Thus, 
$$\Phi_1(\fnc_{K_\C}(Q_1,R_1))\subset \Phi_1(\fnc_{K_\C}(Q_2,R_2))\subset \fnc_K(Q_2,R_2)$$ and so 
$\Phi_1(\fnc_{K_\C}(Q_1,R_1))\subset \fnc_K(Q_1,R_1)$ since satisfying the relations $R_1$ and having values in $K$ definitely implies membership in
$ \fnc_K(Q_1,R_1)$.
\end{proof}

\begin{thm}\label{pinchtheorem}

Let $(\mathcal{Q}_1,\mathcal{R}_1),...,(\mathcal{Q}_n,\mathcal{R}_n)$ be a collection of quivers with relations.  Define $\mathcal{R}=\mathcal{R}_1\cup\cdots\cup \mathcal{R}_n$, and $\mathcal{Q}$ to be the image of some finite collection of pinching maps applied to $\mathcal{Q}_1\cup\cdots \cup \mathcal{Q}_n$.  Suppose for each index $1\leq i\leq n$, $\fnc_G(\mathcal{Q}_i,\mathcal{R}_i)$ $\gauge_K(\mathcal{Q}_i)$-equivariantly strong deformation retracts to $\fnc_K(\mathcal{Q}_i,\mathcal{R}_i)$.  Then $\moduli_G(\mathcal{Q},\mathcal{R})$ strong deformation retracts onto $\moduli_K(\mathcal{Q},\mathcal{R})$.
\end{thm}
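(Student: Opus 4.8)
The plan is to reduce Theorem \ref{pinchtheorem} to the single-quiver retraction statement of Theorem \ref{kempnesscorollary} by identifying the correct triple of arrow-equivalent quivers to feed into Lemma \ref{pinchlemma}. The key structural observation is that pinching vertices changes only the gauge group, never the representation space: if $\mathcal{Q}$ is obtained from the disjoint union $\mathcal{Q}_1\cup\cdots\cup\mathcal{Q}_n$ by a finite sequence of pinching maps, then $\fnc_G(\mathcal{Q},\mathcal{R})=\fnc_G(\mathcal{Q}_1\cup\cdots\cup\mathcal{Q}_n,\mathcal{R})$ as sets (indeed as varieties), and the only change is that $\gauge_G(\mathcal{Q})$ is a quotient of $\gauge_G(\mathcal{Q}_1\cup\cdots\cup\mathcal{Q}_n)$ obtained by diagonally identifying the coordinates at the pinched vertices. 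So the representation space carrying the $\gauge_G(\mathcal{Q})$-action is exactly the product of the individual representation spaces.

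First I would assemble the hypothesized per-component retractions into a single retraction on the product. For each $i$, the given $\gauge_K(\mathcal{Q}_i)$-equivariant strong deformation retraction $\Phi^{(i)}_t$ of $\fnc_G(\mathcal{Q}_i,\mathcal{R}_i)$ onto $\fnc_K(\mathcal{Q}_i,\mathcal{R}_i)$ can be taken coordinatewise; their product $\Phi_t=\Phi^{(1)}_t\times\cdots\times\Phi^{(n)}_t$ is then a strong deformation retraction of $\fnc_G(\mathcal{Q}_1\cup\cdots\cup\mathcal{Q}_n,\mathcal{R})$ onto $\fnc_K(\mathcal{Q}_1\cup\cdots\cup\mathcal{Q}_n,\mathcal{R})$. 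The crucial point to verify is that this product map is equivariant not merely for $\prod_i\gauge_K(\mathcal{Q}_i)$ but for the pinched gauge group $\gauge_K(\mathcal{Q})$; this holds because $\gauge_K(\mathcal{Q})$ embeds into $\prod_i\gauge_K(\mathcal{Q}_i)$ as the subgroup respecting the diagonal identifications at pinched vertices, and $\Phi_t$, being equivariant for the larger product group, is a fortiori equivariant for this subgroup. Here I would set $Q_3=\mathcal{Q}$ in the language of Lemma \ref{pinchlemma}, so that $\Phi_t$ is $\gauge_K(\mathcal{Q})$-equivariant.

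Next I would apply Theorem \ref{kempnesscorollary} to the pinched data. Take $V_G=\fnc_G(\mathcal{Q},\mathcal{R})$ with the $G=\gauge_G(\mathcal{Q})=K_\C$-action, and $W_K=\fnc_K(\mathcal{Q},\mathcal{R})$, which is precisely the real locus (the $K$-valued representations satisfying $\mathcal{R}$), is $\gauge_K(\mathcal{Q})$-stable, and is compact since $K$ is compact and the relations cut out a closed subset of a product of copies of $K$. The product retraction $\Phi_t$ supplies exactly the $\gauge_K(\mathcal{Q})$-equivariant strong deformation retraction of $V_G$ onto $W_K$ required by the hypothesis of Theorem \ref{kempnesscorollary}. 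That theorem then yields a strong deformation retraction of $V_G\quot\gauge_G(\mathcal{Q})=\moduli_G(\mathcal{Q},\mathcal{R})$ onto $W_K/\gauge_K(\mathcal{Q})=\moduli_K(\mathcal{Q},\mathcal{R})$, which is the claim.

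The main obstacle I anticipate is the equivariance bookkeeping across the pinch, namely confirming that pinching does not disturb the retraction's equivariance. The subtlety is that $\gauge_G(\mathcal{Q})$ is strictly smaller than $\prod_i\gauge_G(\mathcal{Q}_i)$ (it is the diagonal subgroup at identified vertices), so one must check that the weaker equivariance still suffices, and conversely that the pinched group genuinely acts through the product action on the unchanged representation space. Lemma \ref{pinchlemma} is precisely the technical device that isolates this issue: its items (1)--(3) guarantee that arrow-equivalence makes $\fnc_G$ unchanged, that the retraction preserves the relevant invariant subsets, and that restricting along a pinch preserves the strong-deformation-retraction property together with equivariance. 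Thus the real content is verifying that the product retraction satisfies the hypotheses of Lemma \ref{pinchlemma} with $(\mathcal{Q}_2,\mathcal{R}_2)=(\mathcal{Q}_1\cup\cdots\cup\mathcal{Q}_n,\mathcal{R})$ and $Q_3=\mathcal{Q}$; once that is in place, the chain Lemma \ref{pinchlemma} $\Rightarrow$ Theorem \ref{kempnesscorollary} closes the argument with only routine compactness and real-locus checks remaining.
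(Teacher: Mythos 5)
Your proposal is correct and follows essentially the same route as the paper: identify $(Q_2,R_2)=(\mathcal{Q}_1\cup\cdots\cup\mathcal{Q}_n,\mathcal{R})$, $Q_1=Q_3=\mathcal{Q}$, take the product of the component retractions, note that $\gauge_K(\mathcal{Q})$-equivariance follows since it sits inside $\prod_i\gauge_K(\mathcal{Q}_i)$ as the diagonal subgroup at pinched vertices, invoke Lemma \ref{pinchlemma}, and finish with Theorem \ref{kempnesscorollary} together with the cellular inclusion of Proposition \ref{kmodsubsetgmod}. One slip of terminology: in your first paragraph $\gauge_G(\mathcal{Q})$ is called a \emph{quotient} of $\gauge_G(\mathcal{Q}_1\cup\cdots\cup\mathcal{Q}_n)$, whereas it is the diagonal \emph{subgroup} (as you correctly state later, and as the a fortiori equivariance argument requires).
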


\begin{proof}

In the hypotheses of Theorem \ref{pinchtheorem}, let 
$(Q_2,R_2)$ be  $(\mathcal{Q}_1\cup\cdots \cup \mathcal{Q}_n,\mathcal{R})$, let $(Q_1,R_1)$ be $(\mathcal{Q},\mathcal{R})$, and let $Q_3=Q_1$ (note that $R_3$ never served a purpose so we can suppose $R_3=\emptyset$). Clearly, $Q_1,Q_2,Q_3$ are arrow equivalent.

$\fnc_{K_\C}(Q_1,R_1)\subset \fnc_{K_\C}(Q_2,R_2)$ since they are equal in this case since pinching does not change the set of 
cycles in the relations sets.

There exists a strong deformation retraction $\Phi_t$ that is $\gauge_{K}(Q_2)$-equivariant from $\fnc_{K_\C}(Q_2,R_2)$ onto 
$\fnc_{K}(Q_2,R_2)$ by the hypothesis of our theorem.  However, the pinching operation gives $\gauge_K(Q_3)=\gauge_K(Q_1)\subset \gauge_K(Q_2)$ is a subgroup (diagonally embed $K\hookrightarrow K\times K$ at identified vertices), and thus the strong deformation retraction is likewise $\gauge_{K}(Q_3)$-equivariant.

Lastly, for all time $t$, $\Phi_t(\fnc_{K_\C}(Q_1,R_1))\subset \fnc_{K_\C}(Q_1,R_1)$ since 
$\fnc_{K_\C}(Q_1,R_1)=\fnc_{K_\C}(Q_2,R_2)$ and the stability on the latter is assumed.

We have just proved that Theorem \ref{pinchtheorem} implies $\fnc_G(\mathcal{Q},\mathcal{R})$ $\gauge_K(\mathcal{Q})$-equivariantly strong deformation retracts to $\fnc_G(\mathcal{Q},\mathcal{R})$.

Thus Theorem \ref{kempnesscorollary} implies the theorem since Proposition \ref{kmodsubsetgmod} implies $\moduli_K(\mathcal{Q},\mathcal{R})$ is canonically included as a cellular sub-complex of $\moduli_G(\mathcal{Q},\mathcal{R})$, and it is in the Kempf-Ness set by Proposition \ref{kempfnesssection}.

\end{proof}

\begin{cor}\label{freeprodcor}
Let $\Gamma_1$,...,$\Gamma_m$ be finitely generated groups and let $\Gamma_1*\cdots*\Gamma_m$ be their free product. 
Let $K$ be a compact Lie group, and $K_\C$ be its complexification.  
If $\hom(\Gamma_i,K_\C)$ $K$-equivariantly strong deformation retracts to $\hom(\Gamma_i,K)$ for all $1\leq i\leq m$, then
$\hom(\Gamma_1*\cdots*\Gamma_m,K_\C)\quot K_\C$ strongly deformation retracts to $\hom(\Gamma_1*\cdots*\Gamma_m,K)/K$.
\end{cor}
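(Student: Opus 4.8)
The plan is to recast the statement entirely in the language of quivers with relations and then recognize it as the special case of Theorem \ref{pinchtheorem} in which the pinching map collapses all vertices to a single one. First I would realize each factor group as a one-vertex quiver: by Remark \ref{charactervarietywithrelationsremark}, for each $i$ choose a one-vertex quiver with relations $(\mathcal{Q}_i,\mathcal{R}_i)$ whose single vertex carries one loop per generator of $\Gamma_i$ and whose relations are the defining relators, so that $\fnc_G(\mathcal{Q}_i,\mathcal{R}_i)=\hom(\Gamma_i,G)$ and the gauge group $\gauge_G(\mathcal{Q}_i)\cong G$ acts by simultaneous conjugation, which is exactly the conjugation action defining the character variety. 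If $\Gamma_i$ is only finitely generated rather than finitely presented, I would invoke Noetherianity of $G^{k_i}$ (with $k_i$ the number of generators): the set $\hom(\Gamma_i,G)$ is the intersection of the closed sets $\{w=I\}$ over all relators $w$, hence equals a finite subintersection, so a finite relator set $\mathcal{R}_i$ cuts out the same representation variety set-theoretically, which is all the topological conclusion requires.

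Next I would match the hypotheses. Under this dictionary, the assumption that $\hom(\Gamma_i,K_\C)$ $K$-equivariantly strong deformation retracts onto $\hom(\Gamma_i,K)$ becomes precisely the statement that $\fnc_{K_\C}(\mathcal{Q}_i,\mathcal{R}_i)$ $\gauge_K(\mathcal{Q}_i)$-equivariantly strong deformation retracts onto $\fnc_K(\mathcal{Q}_i,\mathcal{R}_i)$, since on a one-vertex quiver the $K$-conjugation action \emph{is} the $\gauge_K(\mathcal{Q}_i)$-action. This is verbatim the hypothesis of Theorem \ref{pinchtheorem}.

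Then I would identify the free product with a pinched quiver. Set $\mathcal{R}=\mathcal{R}_1\cup\cdots\cup\mathcal{R}_m$ and let $\mathcal{Q}$ be the one-vertex quiver obtained from the disjoint union $\mathcal{Q}_1\cup\cdots\cup\mathcal{Q}_m$ by applying the pinching map identifying all $m$ vertices. The arrows of $\mathcal{Q}$ are the disjoint union of all loops and the relations are $\mathcal{R}$; by Remark \ref{charactervarietywithrelationsremark} this one-vertex quiver corresponds to the finitely presented group whose generators and relators are the unions of those of the $\Gamma_i$, which is exactly $\Gamma_1*\cdots*\Gamma_m$. Because the relations in $\mathcal{R}_i$ involve only arrows coming from $\mathcal{Q}_i$, the representation space splits as $\fnc_G(\mathcal{Q},\mathcal{R})=\prod_i \fnc_G(\mathcal{Q}_i,\mathcal{R}_i)=\prod_i \hom(\Gamma_i,G)=\hom(\Gamma_1*\cdots*\Gamma_m,G)$, with the single gauge factor acting by simultaneous conjugation; hence $\moduli_G(\mathcal{Q},\mathcal{R})=\hom(\Gamma_1*\cdots*\Gamma_m,K_\C)\quot K_\C$ and $\moduli_K(\mathcal{Q},\mathcal{R})=\hom(\Gamma_1*\cdots*\Gamma_m,K)/K$.

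Finally, applying Theorem \ref{pinchtheorem} to the data $(\mathcal{Q}_i,\mathcal{R}_i)$, $\mathcal{R}$, and $\mathcal{Q}$ produces a strong deformation retraction of $\moduli_G(\mathcal{Q},\mathcal{R})$ onto $\moduli_K(\mathcal{Q},\mathcal{R})$, which is the asserted retraction. The substantive analytic and homotopical work is entirely contained in Theorem \ref{pinchtheorem}; the only points demanding care are the bookkeeping identification that a free product is realized by total pinching (so that arrow-equivalence and the product decomposition of representation spaces genuinely hold) and the reduction from finitely generated to a finite set of defining relators. I expect the latter, namely ensuring a finitely-generated-but-not-finitely-presented factor still yields a legitimate quiver with a finite relation set, to be the main point to get right, and it is resolved by the Noetherian argument above.
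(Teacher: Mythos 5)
Your proof is correct and follows essentially the same route as the paper: realize each $\Gamma_i$ as a one-vertex quiver with relations via Remark \ref{charactervarietywithrelationsremark}, pinch all $m$ vertices to a single vertex so the resulting quiver with relations $(\mathcal{Q},\mathcal{R})$ realizes $\Gamma_1*\cdots*\Gamma_m$, and invoke Theorem \ref{pinchtheorem}. Your Noetherian argument reducing a finitely generated (not necessarily finitely presented) factor to a finite relator set is a worthwhile detail that the paper's one-line proof leaves implicit, as is your explicit verification that the pinched quiver's representation space is $\prod_i\hom(\Gamma_i,G)\cong\hom(\Gamma_1*\cdots*\Gamma_m,G)$, which the paper only notes in a subsequent remark.
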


\begin{proof}
For $1\leq m\leq m$, let $(\mathcal{Q}_i,\mathcal{R}_i)$ be one-vertex quivers each of whose relations corresponds to the finitely generated
groups $\Gamma_1,...,\Gamma_m$.  Then identifying (by {\it pinching}) the $m$-vertices in $\mathcal{Q}_1\cup\cdots\cup \mathcal{Q}_m$ 
together to obtain $Q$ we find ourselves in the context of Theorem \ref{pinchtheorem}.  So the result follows.
\end{proof}

\begin{rem}

This is a very special case of the Theorem \ref{pinchtheorem}.  However, notice that this immediately implies Theorem
4.3 in \cite{FlLa} since the case of ``one-loop'' quivers (1 vertex and 1 arrow) give a single copy of $G$ with $G$-action being 
conjugation.
\end{rem}

\begin{rem}
One can directly prove the above corollary by observing $\hom(\Gamma_1*\cdots*\Gamma_m,G)\cong \hom(\Gamma_1,G)\times\cdots \times \hom(\Gamma_m,G)$, 
and indeed, we originally did.  However, generalizing this result we came to Theorem \ref{pinchtheorem}.
\end{rem}

\begin{cor}[Strong Deformation Retraction Theorem]\label{defretractthm}
There is a strong deformation retraction of 
$\fnc_G(Q)\quot\mathcal{G}_G$ onto $\fnc_K(Q)/\mathcal{G}_K$.
\end{cor}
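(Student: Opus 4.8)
The plan is to exhibit $Q$ as a pinching of a disjoint union of elementary quivers and then invoke Theorem \ref{pinchtheorem} with empty relation sets. First I would label the arrows of $Q$ as $a_1,\dots,a_{N_A}$, and to each $a_i$ associate the two-vertex, one-arrow quiver $\mathcal{Q}_i$ (a single arrow joining two distinct vertices), taking $\mathcal{R}_i=\emptyset$. The disjoint union $\mathcal{Q}_1\cup\cdots\cup\mathcal{Q}_{N_A}$ then has exactly $N_A$ arrows and $2N_A$ vertices, and pinching these vertices together according to the incidence data of $Q$---identifying the tail and head vertices of each $\mathcal{Q}_i$ with the corresponding vertices of $Q$---recovers $Q$ itself. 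A loop of $Q$ arises by pinching the two vertices of the relevant $\mathcal{Q}_i$ together. In this way $Q=\mathcal{Q}$ is the image of a finite sequence of pinching maps, and $\mathcal{R}=\emptyset$, placing us squarely in the hypotheses of Theorem \ref{pinchtheorem}.

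Next I would verify the remaining hypothesis: for each $i$, that $\fnc_G(\mathcal{Q}_i)=G$ strong deformation retracts $\gauge_K(\mathcal{Q}_i)$-equivariantly onto $\fnc_K(\mathcal{Q}_i)=K$. Here $\gauge_K(\mathcal{Q}_i)=K\times K$ acts by $(k_1,k_2)\cdot g=k_2 g k_1^{-1}$. I would use the Cartan (polar) decomposition $G=K\exp(\mathfrak{p})$, where $\mathfrak{g}=\mathfrak{k}\oplus\mathfrak{p}$: every $g\in G$ is uniquely $g=k\exp(X)$ with $k\in K$ and $X\in\mathfrak{p}$, and I define $\phi_t(g)=k\exp((1-t)X)$, which is a strong deformation retraction of $G$ onto $K$. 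The key computation is equivariance: since $\mathfrak{p}$ is $\mathrm{Ad}_K$-stable, the polar form of $k_2 g k_1^{-1}$ is $(k_2 k k_1^{-1})\exp(\mathrm{Ad}_{k_1}X)$, so that
\[
\phi_t(k_2 g k_1^{-1})=(k_2 k k_1^{-1})\exp((1-t)\mathrm{Ad}_{k_1}X)=k_2\,\phi_t(g)\,k_1^{-1},
\]
which is exactly $(k_1,k_2)\cdot\phi_t(g)$, confirming $K\times K$-equivariance.

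With both hypotheses of Theorem \ref{pinchtheorem} established, its conclusion yields that $\moduli_G(Q)=\fnc_G(Q)\quot\gauge_G$ strong deformation retracts onto $\moduli_K(Q)=\fnc_K(Q)/\gauge_K$, which is the assertion. I expect the only delicate point to be the equivariance of the elementary retraction $\phi_t$; once one observes that $\mathfrak{p}=i\mathfrak{k}$ is $\mathrm{Ad}_K$-invariant, this is immediate, so no essential obstacle remains beyond correctly organizing the pinching decomposition of $Q$ into single-arrow blocks. I note that the special case in which the two vertices of each $\mathcal{Q}_i$ are pinched (the diagonal $K\hookrightarrow K\times K$, acting by conjugation) recovers the one-loop situation, matching the retraction of \cite{FlLa}; the argument above simply requires the full $K\times K$-equivariant version.
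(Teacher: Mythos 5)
Your proposal is correct and follows essentially the same route as the paper: both decompose $Q$ as a pinching of $N_A$ single-arrow quivers with empty relation sets, invoke Theorem \ref{pinchtheorem}, and supply the required $\gauge_K(\mathcal{Q}_i)$-equivariant strong deformation retraction via the polar decomposition $G=K\exp(\mathfrak{p})$. The only cosmetic difference lies in verifying the two-sided equivariance $\phi_t(k_2gk_1^{-1})=k_2\,\phi_t(g)\,k_1^{-1}$: you argue from uniqueness of the polar decomposition together with $\mathrm{Ad}_K$-stability of $\mathfrak{p}$, whereas the paper computes directly with the formula $\phi_t(g)=g(g^*g)^{-t/2}$ --- the same fact, organized differently.
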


\begin{proof}
We again use Theorem \ref{pinchtheorem}.  Let $N_A$ be the number of arrows in $Q$, and let $\mathcal{Q}_i$ be $N_A$ 1-arrow quivers with no relations.  Then $Q$ is obtained from $\{\mathcal{Q}_i\}$ by a finite sequence of pinching maps.  To complete the proof we need only show that each 1-arrow quiver $G$-valued representation space $\gauge_K(Q)$-equivariantly strong deformation retracts to the corresponding $K$-valued space.

Consider the polar decomposition for the complex reductive Lie group
$G$ with maximal compact $K$. The multiplication map\[m:
K\times\exp\mathfrak{p}\to G\]
defines a diffeomorphism (see \cite{K}, page 384), where $\mathfrak{g}=\mathfrak{k}\oplus\mathfrak{p}$
is a Cartan decomposition of the Lie algebra of $G$, with $\mathfrak{k}$
being the Lie algebra of $K$. %

As we stated in \cite{FlLa}, the inverse to $m$ can be defined explicitly by\begin{eqnarray}
m^{-1}:G & \to & K\times\exp\mathfrak{p}\nonumber \\
g & \mapsto & \left(g(g^{*}g)^{-\frac{1}{2}},(g^{*}g)^{\frac{1}{2}}\right),\label{eq:m-1}\end{eqnarray}
where $g^{*}$ denotes the Cartan involution on $G$ applied to $g$
(which, in the case of $\SL(n,\C)$ and $\SU(n)$, is the usual conjugate
transpose map). The formula above follows from the fact that, if we
write $g=ke^{p}$, for $k\in K$ and $p\in\mathfrak{p}$, then $g^{*}=e^{p^{*}}k^{*}=e^{p}k^{*}$,
which implies $g^{*}g=e^{2p}$ (since the Cartan involution fixes
any element of $\mathfrak{p}$).

The family of maps, \begin{eqnarray}
\phi_{t}:G & \to & G\nonumber \\
g=ke^{p} & \mapsto & g(g^{*}g)^{-\frac{t}{2}}=ge^{-tp}=ke^{(1-t)p},\label{eq:phi}\end{eqnarray}
for $t\in[0,1]\subset\mathbb{R}$, provides a strong deformation retraction
from $G$ to $K$. Moreover, for any $t\in[0,1]$, $\phi_{t}$ is
$K$-equivariant, \[
\phi_{t}(k\cdot g)=k\cdot\phi_{t}(g),\]
with respect to the conjugation action of $K$ on $G$:\[
k\cdot g:=kgk^{-1},\quad\mbox{for }k\in K,g\in G.\]

We now return to the moduli space $\fnc_{G}(Q)\quot \gauge_G$ of $G$-representations
of a fixed quiver $Q$. We can naturally define $\Phi_{t}:\mathcal{F}_G(Q)\to\mathcal{F}_G(Q)$
as the map $\phi_{t}$ component-wise for all arrows $a\in Q_A$, that
is:\[
\left(\Phi_{t}\left(f\right)\right)(a):=\phi_{t}(f(a)),\quad\quad f\in\mathcal{F}_G(Q).\]

In general, if $r_1,...,r_m$ are strong deformation retractions of spaces $T_1,...,T_m$ onto subspaces $S_1,...,S_m$, then 
$$(r_1,...,r_m):T_1\times\cdots\times T_m\to T_1\times\cdots\times T_m$$ is a strong deformation retraction onto $S_1\times\cdots\times S_m$ with 
respect to the product topology.  Thus since $\Phi_t:\mathcal{F}_G(Q)\to\mathcal{F}_G(Q)$ is equivalent to $\phi^{N_A}_t:G^{N_A}\to G^{N_A}$, 
we conclude $\Phi_t$ is a strong deformation retract.  Note that the affine embedding topology of $\mathcal{F}_G(Q)$ corresponds to the the product 
topology on $G^{N_A}$ where $G$ is given the affine embedding topology.

To prove $\mathcal{G}_{K}$-equivariance we first note that for any real number $t\geq0$, we have\[
\left(he^{tp}h^{-1}\right)=\left(he^{p}h^{-1}\right)^{t}\]
for all $h\in K$ and $p\in\mathfrak{p}$. The formula certainly works
for rational $t$ (one can see this directly by writing down the Taylor series), and the general case follows by continuity.
Using this, we compute at each arrow $a\in Q_A$, the action under 
$\gamma\in\mathcal{G}_{K}$.  We use the notation $\gamma_{h_a}$ instead of $\gamma(h_a)$ to make this computation more readable.
\begin{eqnarray*}
\left(\Phi_{t}(\gamma\cdot f)\right)(a) & = & \phi_{t}(\gamma_{h_{a}}f(a)\gamma_{t_{a}}^{-1})\\
 & = & \gamma_{h_{a}}f(a)\gamma_{t_{a}}^{-1}\left((\gamma_{h_{a}}f(a)\gamma_{t_{a}}^{-1})^{*}\gamma_{h_{a}}f(a)\gamma_{t_{a}}^{-1}
\right)^{-\frac{t}{2}}\\
 & = & \gamma_{h_{a}}f(a)\gamma_{t_{a}}^{-1}\left(\gamma_{t_{a}}f(a)^{*}f(a)\gamma_{t_{a}}^{-1}\right)^{-\frac{t}{2}}\\
 & = & \gamma_{h_{a}}f(a)\gamma_{t_{a}}^{-1}\left(\gamma_{t_{a}}e^{2p_{a}}\gamma_{t_{a}}^{-1}\right)^{-\frac{t}{2}}\\
 & = & \gamma_{h_{a}}f(a)\gamma_{t_{a}}^{-1}\gamma_{t_{a}}e^{-tp_{a}}\gamma_{t_{a}}^{-1}\\
 & = & \gamma_{h_{a}}f(a)e^{-tp_{a}}\gamma_{t_{a}}^{-1}\\
 & = & \gamma_{h_{a}}\phi_{t}(f(a))\gamma_{t_{a}}^{-1}\\
 & = & \gamma\cdot\phi_{t}(f(a)),\end{eqnarray*}
where we used $f(a)=k_{a}e^{p_{a}}\in G$, $k_{a}\in K,\ p_{a}\in\mathfrak{p}$.
Note also that, strictly speaking, this shows that $\Phi_{t}$ is
equivariant for the action of $\mathcal{G}_{K}$ at a single arrow $a\in A$,
but since the action is factor-wise in $\mathcal{F}_G(Q)\cong G^{N_A}$, this calculation 
implies equivariance for each factor simultaneously; thus
$\Phi_{t}$ is $\mathcal{G}_{K}$-equivariant.
\end{proof}

\begin{rem}
In the appendix we determine the Kempf-Ness set that realizes the isomorphism $\moduli_G(Q)\cong \mathcal{KN}/\gauge_K(Q)\supset \moduli_K(Q)$.
\end{rem}

\section{Super-Cyclic Quivers and Additive Quiver Representations}

In this section, we start by letting $G$ be the general linear group of a complex vector space
$W$, and consider the canonical inclusion 
$G=\mathsf{GL}(W)\subset\mathsf{End}(W)\cong W\otimes W^{*}$.  We note here that $G$ is not a subvariety of 
$\mathsf{End}(W)$ although it is an algebraic group.  Also, in this section we again assume that all quivers are connected.

\subsection{General Additive Quiver Representations}

As additional consequences of the previous results, we will now relate
the moduli space of $G$-valued quiver representations to the usual
moduli spaces of quiver representations with a dimension vector which
is constant, with all entries equal to $d=\dim W$. To avoid confusion,
the latter will be sometimes called additive quiver representations.
For additive quiver representations we mainly follow the exposition
\cite{Rei2}.

\begin{defn}
Fix a quiver
$Q=(Q_{V},Q_{A})$. Recall that when $\mathbf{d}=(d_{v}|v\in Q_{V})$
is a constant dimension vector, so that $d_{v}=d$ for all $v\in Q_{V}$,
and $W$ is a complex vector space of dimension $d$, $\rep_{\mathbb{d}}(Q)\equiv \rep_{W}(Q):=\bigoplus_{Q_{A}}\mathsf{End}(W)$
is the space of quiver representations in $W$. We say that such a
(additive) quiver representation $x=(x_{a}|a\in Q_{A})\in \rep_{n}(Q)$
is \emph{invertible} if all its components $x_{a}$ are in $\mathsf{Aut}(W)=\mathsf{GL}(W)$.
\end{defn}

It is clear
that we have a canonical inclusion\begin{equation}
\varphi:\mathcal{F}_{G}(Q)\to \rep_{W}(Q)\label{eq:embedding}\end{equation}
obtained by mapping a $G=\mathsf{GL}(W)$-quiver representation to the corresponding
representation in $W$. The image of $\varphi$ is clearly the set
of invertible quiver representations in $W$, denoted $\rep_{W}^{inv}(Q)$,
and it is an open dense set in $\rep_{W}(Q)$. Note also that the dimension of the space of equidimensional 
additive quiver
representation (see \cite{Rei2}) coincides with the formula given
in Corollary \ref{cor:dimension} when $G=\GL(W)$.

Let $G_{W}(Q)=\mathsf{GL}(W)^{N_{V}}=\gauge_G(Q)$
and $\moduli_{W}^{inv}(Q):=\rep_{W}^{inv}(Q)\quot G_{W}(Q)$ denote the moduli space
of invertible quiver representations.

\begin{cor}\label{invertiblequiver}
Let $Q$ be
a quiver and $W$ be a vector space. The moduli space of invertible
quiver representations $\moduli_{W}^{inv}(Q)$ is isomorphic, as an affine
algebraic variety, to the character variety $\hom(F_{r},\mathsf{GL}(n,\mathbb{C}))\quot\mathsf{GL}(n,\mathbb{C})$,
where $r=b_{1}(Q)$.\end{cor}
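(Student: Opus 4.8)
The plan is to reduce the statement to the Equivalence Theorem (Theorem \ref{charvarquiverequal}) by identifying the moduli space of invertible quiver representations with the moduli space of $G$-valued quiver representations for $G=\GL(W)$. Write $n=\dim W$, so that $G=\GL(W)\cong\GL(n,\C)$. The whole argument rests on the observation, already recorded at \eqref{eq:embedding}, that the canonical map $\varphi:\fnc_G(Q)\to\rep_W(Q)$ is an isomorphism of $\fnc_G(Q)$ onto the invertible locus $\rep_W^{inv}(Q)$, together with the fact that this isomorphism intertwines the two natural group actions; the rest is then supplied by Theorem \ref{charvarquiverequal}.

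First I would make the identification of $\varphi$ as an isomorphism of affine varieties precise. The target $\rep_W(Q)=\bigoplus_{Q_A}\mathsf{End}(W)\cong\mathsf{End}(W)^{N_A}$ is an affine space, and $\rep_W^{inv}(Q)$ is the principal open subset on which the product $\prod_{a\in Q_A}\det(x_a)$ of the determinants is nonzero. Hence $\rep_W^{inv}(Q)$ is itself affine, with coordinate ring obtained from $\C[\rep_W(Q)]$ by localizing at this product; but this is exactly the coordinate ring of $\GL(W)^{N_A}=\fnc_G(Q)$ (recalling that, although $\GL(W)$ is not closed in $\mathsf{End}(W)$, as an abstract affine variety its coordinate ring is the localization of $\C[\mathsf{End}(W)]$ at the determinant). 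Thus $\varphi$ is an isomorphism of affine varieties onto $\rep_W^{inv}(Q)$.

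Next I would check equivariance. The gauge group of $Q$ is $\gauge_G(Q)=\GL(W)^{N_V}=G_W(Q)$, and under this identification the gauge action $(g\cdot f)(a)=g(h_a)f(a)g(t_a)^{-1}$ is literally the action $(\dots,g_v,\dots)\cdot(\dots,T_{v\to w},\dots)=(\dots,g_wT_{v\to w}g_v^{-1},\dots)$ used to define $\moduli_W^{inv}(Q)$. So $\varphi$ is a $G_W(Q)$-equivariant isomorphism of affine varieties. An equivariant isomorphism of affine $G$-varieties induces a $G$-equivariant isomorphism of coordinate rings, which restricts to an isomorphism of invariant subrings, and hence of the associated $\mathrm{Spec}_{max}$; therefore
$$\moduli_W^{inv}(Q)=\rep_W^{inv}(Q)\quot G_W(Q)\;\cong\;\fnc_G(Q)\quot\gauge_G(Q)=\moduli_G(Q).$$

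Finally, I would invoke Theorem \ref{charvarquiverequal} with $G=\GL(n,\C)$: if $Q$ is not contractible then $\moduli_G(Q)$ is algebraically isomorphic to the character variety $\hom(F_r,\GL(n,\C))\quot\GL(n,\C)$ with $r=b_1(Q)$, while if $Q$ is contractible then $r=b_1(Q)=0$ and both $\moduli_W^{inv}(Q)$ and $\hom(F_0,\GL(n,\C))\quot\GL(n,\C)$ are single points, so the statement holds trivially. The only genuinely delicate point is the descent of the isomorphism to the GIT quotients in the third step: one must confirm that passing from $\rep_W(Q)$ to its principal open (non-closed) invertible locus is compatible with forming $\mathrm{Spec}_{max}$ of the invariants. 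Since $\varphi$ is an honest isomorphism of affine $G_W(Q)$-varieties, however, this is purely formal, and all the substantive content is carried by the Equivalence Theorem.
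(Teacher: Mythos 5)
Your proposal is correct and takes essentially the same approach as the paper: the paper's entire proof is the one-line remark that the corollary is ``just a special case of Theorem \ref{charvarquiverequal} for $G=\GL(n,\C)$,'' relying implicitly on the identification $\varphi:\fnc_{\GL(W)}(Q)\to\rep_W^{inv}(Q)$ and the matching of the gauge action with the $G_W(Q)$-action, exactly as you describe. The details you supply --- the localization description of $\C[\rep_W^{inv}(Q)]$, the equivariance check, the formal descent to invariant rings, and the contractible case --- are all sound and simply make explicit what the paper leaves implicit.
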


\begin{proof}

This is just
a special case of Theorem \ref{charvarquiverequal} for $G=\mathsf{GL}(n,\C)$. 
\end{proof}

The natural
action (\ref{eq:action}) of $\gauge_G(Q)$ on $\mathcal{F}_{G}(Q)$
that we have been considering, corresponds naturally to the usual
action of $G_{W}(Q)$ on $\rep_{W}(Q)$. However, it is not generally
true that a closed orbit in $\mathcal{F}_{G}(Q)$ will be closed in
$\rep_{W}(Q)$ (under the embedding $\varphi$ above), as simple examples
show.  We now examine this phenomena as it is necessary to compare the algebro-geometric quotients \[
\mathcal{M}_{G}(Q)=\mathcal{F}_{G}(Q)\quot\gauge_G(Q)\quad\quad\mbox{and}\quad\quad \moduli_{W}(Q)=\rep_{W}(Q)\quot G_{W}(Q).\]

By a general theorem on representations of algebras \cite{A}, we know that a given
element $x\in \rep_{W}(Q)$ has a closed orbit if and only if it is semisimple,
that is, it is a direct sum of simple representations of $Q$ (those
without proper subrepresentations). Therefore the affine quotient
$\moduli_{W}(Q)=\rep_{W}(Q)\quot G_{W}(Q)$ parametrizes equivalence classes
of semisimple quiver representations in $W$. Denote the set of semisimple
representations by $\rep_{W}^{ss}$, and let $\fnc^{ss}_G=\fnc_G(Q)\cap\rep_W^{ss}.$

Let $G=\mathsf{GL}(W)$.  We can picture what we have been discussing, using a diagram that relates $\mathsf{GL}(W)$-quiver representations
and usual quiver representations (compare to the diagram in \cite{Rei2}):

\[
\begin{array}{ccc}
\mathcal{F}_{G}^{ss} & \to & \rep_{W}^{ss}\\
\downarrow &  & \downarrow\\
\mathcal{F}_{G} & \hookrightarrow & \rep_{W} \\
\downarrow &  & \downarrow \\
\mathcal{M}_{G} & \to & \moduli_{W} \end{array}\]

We will now study some of the maps involved in the above diagram.
\begin{defn}

We say that
a vertex $v\in Q_{V}$ is a \emph{sink }if all arrows containing $v$
point towards $v$, or equivalently, there are no arrows coming out
from $v$. Similarly, we say that $v\in Q_{V}$ is a \emph{source}
if all arrows containing $v$ point away from $v$. A source or a
sink will be called an \emph{end} of $Q$. An \emph{oriented cycle}
in a quiver is a path of arrows which beings and ends at the same
vertex, by always following the directions provided by the arrows. 
\end{defn}

One can characterize quivers with no ends as those having a path joining
any two points by always following the arrows.  In
other words, a quiver with no ends is ``oriented-path-connected''.
Also, note that quivers without oriented cycles always have ends.  On the other hand,
a quiver without any oriented cycles is called {\it acyclic}, and so a quiver with no ends
might otherwise be called {\it super-cyclic}.

A nice property of a representation of a super-cyclic quiver is that closed orbits are preserved under
the embedding $\varphi:\mathcal{F}_{G}(Q)\to R_{W}(Q)$.

\begin{thm}\label{thm:preserve-closed}
Let $Q$ be a super-cyclic quiver, $W$ be a complex vector space, and $G=\GL(W)$. Then, if
the $\gauge_G(Q)$-orbit of $f\in\mathcal{F}_{G}(Q)$ is closed then the image of $f$ under $\varphi:\mathcal{F}_{G}(Q)\to R_{W}(Q)$
has a closed $G_W$-orbit.
\end{thm}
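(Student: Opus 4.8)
The plan is to prove directly that the $G_W$-orbit of $\varphi(f)$ is closed in $\rep_W(Q)$ by ruling out one-parameter degenerations into the boundary, the key leverage being that super-cyclicity forces every such degeneration to stay inside the invertible locus. Since $\gauge_G(Q)=G_W(Q)$ act by the same formula and $\varphi$ is equivariant, the orbit $\mathcal{O}:=\gauge_G(Q)\cdot f$ and the $G_W$-orbit $G_W\cdot\varphi(f)$ are literally the same set, sitting inside the open dense subset $\fnc_G(Q)=\rep_W^{inv}(Q)\subset\rep_W(Q)$. As $\fnc_G(Q)$ is the principal open set $\{\prod_{a}\det(x_a)\neq 0\}$, the hypothesis that $\mathcal{O}$ is closed in $\fnc_G(Q)$ says exactly that $\overline{\mathcal{O}}\cap\fnc_G(Q)=\mathcal{O}$, where $\overline{\mathcal{O}}$ is the closure in $\rep_W(Q)$. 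By the Hilbert--Mumford/Kempf criterion for the reductive group $G_W$ acting on the affine space $\rep_W(Q)$, it then suffices to show that whenever a one-parameter subgroup $\lambda:\C^{*}\to G_W$ has $x_0:=\lim_{t\to 0}\lambda(t)\cdot\varphi(f)$ existing, the limit $x_0$ lies back in $\mathcal{O}$; for then $\overline{\mathcal{O}}$ contains no boundary orbit and $\mathcal{O}$ is closed.

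First I would fix such a $\lambda=(\lambda_v)_{v\in Q_V}$ and let $W=\bigoplus_n W_v(n)$ be the weight decomposition under $\lambda_v$, with $W_v^{\geq n}:=\bigoplus_{k\geq n}W_v(k)$. Writing each $f(a)$ in block form relative to the decompositions at $t_a$ and $h_a$, the $(m,n)$ block is rescaled by $t^{m-n}$, so existence of $x_0$ is equivalent to $f(a)(W_{t_a}^{\geq n})\subseteq W_{h_a}^{\geq n}$ for all arrows $a$ and all $n$; that is, for each $n$ the tuple $(W_v^{\geq n})_{v}$ is a subrepresentation of the quiver representation $\varphi(f)$. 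Moreover $x_0$ is exactly the associated graded representation, taking the value $\bigoplus_n\mathrm{gr}_n(f(a))$ on $a$, where $\mathrm{gr}_n(f(a)):W_{t_a}(n)\to W_{h_a}(n)$ is the induced map on graded pieces.

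Here is where super-cyclicity enters. For each fixed $n$, because $Q$ is super-cyclic and hence oriented-path-connected, any ordered pair of vertices $(v,w)$ is joined by an oriented path; composing the corresponding invertible maps $f(a)$ carries $W_v^{\geq n}$ injectively into $W_w^{\geq n}$, so $\dim W_v^{\geq n}\leq\dim W_w^{\geq n}$, and the reverse path gives the opposite inequality. Hence $\dim W_v^{\geq n}$ is independent of $v$, and therefore each inclusion $f(a)(W_{t_a}^{\geq n})\subseteq W_{h_a}^{\geq n}$ is an equality of equidimensional spaces. It follows that every graded map $\mathrm{gr}_n(f(a))$ is an isomorphism, so $x_0$ is again invertible, i.e. $x_0\in\rep_W^{inv}(Q)=\fnc_G(Q)$. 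Combined with $x_0\in\overline{\mathcal{O}}$ and the closedness hypothesis $\overline{\mathcal{O}}\cap\fnc_G(Q)=\mathcal{O}$, this yields $x_0\in\mathcal{O}$, which is what the criterion requires.

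The hard part is exactly this control of the Hilbert--Mumford limit: a priori $x_0$ could acquire a degenerate component on $\rep_W(Q)\setminus\rep_W^{inv}(Q)$, and it is super-cyclicity---through the equality $f(a)(W_{t_a}^{\geq n})=W_{h_a}^{\geq n}$ forcing invertibility of the associated graded---that prevents this. I note one could alternatively route the conclusion through \cite{A}, where closedness of the $G_W$-orbit is equivalent to semisimplicity of $\varphi(f)$, and the same filtration analysis shows every subrepresentation of $\varphi(f)$ becomes ``graded-split''. That route still genuinely uses the closed-orbit hypothesis on $\mathcal{O}$ (a single Jordan block over the one-loop quiver is an invertible, non-semisimple representation of a super-cyclic quiver whose orbit is not closed), so I would present the one-parameter-subgroup formulation above, since it incorporates the hypothesis most transparently.
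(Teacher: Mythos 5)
Your proposal is correct, and it reaches the conclusion by a route that differs from the paper's in its key invariant. Both arguments run through the Hilbert--Mumford/Kempf criterion and both exploit super-cyclicity through the same mechanism (per-arrow inequalities that, composed around an oriented cycle through any given arrow, collapse to equalities, with connectedness propagating this to all of $Q_V$); but the paper argues by contradiction and applies this mechanism to a single scalar per vertex, namely the determinant weight $\alpha(v)$ defined by $\det\lambda_{v}(t)=t^{\alpha(v)}$: existence of the limits forces $\alpha(h_a)\geq\alpha(t_a)$, a non-invertible limit at some arrow $b$ forces the strict inequality $\alpha(h_b)>\alpha(t_b)$, and the cycle argument forces all weights equal --- contradiction. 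You instead argue directly, applying the mechanism to the filtration dimensions $\dim W_v^{\geq n}$ at every weight level $n$, identifying the limit explicitly as the associated graded representation and showing it is invertible. Your version is slightly heavier (it needs the weight-decomposition description of when one-parameter limits exist, rather than just continuity of $\det$), but it buys two things. First, a sharper intermediate statement that holds with no hypothesis on the orbit: any one-parameter limit of an invertible representation of a super-cyclic quiver is again invertible. Second, a cleaner accounting of where the closed-orbit hypothesis actually enters: only at the last step, via $\overline{\mathcal{O}}\cap\rep_W^{inv}(Q)=\mathcal{O}$. The paper uses this same intersection fact, but silently, in the opening assertion that a one-parameter subgroup can be chosen whose limit lies in $\rep_W(Q)\setminus\rep_W^{inv}(Q)$; your formulation makes that step explicit, which is a genuine expository improvement. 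Your closing remark (the invertible Jordan block over the one-loop quiver) correctly confirms that the hypothesis cannot be dropped, so the structure of your proof is sound.
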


\begin{proof}

To get a contradiction,
suppose that $x=\varphi(f)$ is not closed in $R_{W}(Q)$, but $f$
is closed in $\mathcal{F}_{G}(Q)$. Then, there exists a morphism
$\lambda:\mathbb{C}^{*}\to G_{W}$ such that $\lim_{t\to0}\lambda(t)\cdot x$
exists in $R_{W}(Q)\setminus R_{W}^{inv}(Q)$. This means that for
all arrows $a\in Q_{A}$, $\lim_{t\to0}\lambda_{h_{a}}(t)\ x(a)\ \lambda_{t_{a}}(t)^{-1}=M(a)$
exists in $\mathsf{End}(W)$, where $\lambda_{v}:\mathbb{C}^{*}\to G$ is the
component of $\lambda$ associated to $v\in Q_{V}$ (post-compose $\lambda$ with projection onto the $v^{\text{th}}$ component).

By composing
with the determinant map, for any vertex, we define the homomorphism
$\mu_{v}=\det\circ\lambda_{v}:\mathbb{C}^{*}\to\mathbb{C}^{*}$. Then,
there are weights $\alpha(v)\in\mathbb{Z}$ such that $\mu_{v}(t):=\det\lambda_{v}(t)=t^{\alpha(v)}$
for $t\in\mathbb{C}^{*}$.

The determinant
is a continuous morphism, so with $c=\det x(a)\neq0$, \begin{equation}
\det M(a)=\det x(a)\cdot\lim_{t\to0}\left(\mu_{h_{a}}(t)\cdot\mu_{t_{a}}(t)^{-1}\right)=
c\cdot\lim_{t\to0}t^{\alpha(h_{a})-\alpha(t_{a})}\label{eq:weight-limit}\end{equation}
Then, we obtain an inequality for weights $\alpha(h_{a})\geq\alpha(t_{a})$.
Now, since $Q$ has no ends, the arrow $a=a_{0}\in Q_{A}$ is in a
cycle formed by the sequence $(a=a_{0},a_{1}...,a_{n})$ such that
$h_{a}=t_{a_{1}},h_{a_{1}}=t_{a_{2}},\cdots,h_{a_{n}}=t_{a}$. So,
the repetition of the argument above for the other arrows in the same
cycle will imply that $\alpha(h_{a_{i}})\geq\alpha(t_{a_{i}})$ for
all $i=1,...,n$. So we conclude:\[
\alpha(h_{a})\geq\alpha(t_{a})=\alpha(h_{a_{n}})\geq\alpha(t_{a_{n}})=\alpha(t_{a_{n-1}})\geq\cdots\geq\alpha(t_{a_{1}})=\alpha(h_{a})\]
So the only solution is to have all weights equal in that cycle. Since
the quiver is connected, we conclude that $\alpha(v)$ is constant
for all $v\in Q_{V}$. 

On the other hand, we are assuming that $\lim_{t\to 0}\lambda(t)\cdot x\in R_{W}(Q)\setminus R_{W}^{inv}(Q)$.
This means that, for at least one arrow, say $b\in Q_{A}$, the corresponding
limit $\lim_{t\to0}\lambda_{h_{b}}(t)\ x(b)\ \lambda_{t_{b}}(t)^{-1}=M(b)$
belongs to $\mathsf{End}(W)\setminus \GL(W)$, so that $\det M(b)=0$. But this
forces $\alpha(h_{b})>\alpha(t_{b})$ in equation (\ref{eq:weight-limit})
which contradicts the equality of all the weights, asserted before,
and which was a consequence of our hypothesis.
\end{proof}

Recall our notation for character varieties:  $\X_r(\GL(n,\C)):=\hom(F_r,\GL(n,\C))\quot \GL(n,\C)$.

\begin{cor}[Density Theorem]
Let $Q$ be a super-cyclic quiver such that $r=b_1(Q)$, and suppose $W$ is a complex 
vector space of dimension $n$.
Then $\X_r(\GL(n,\C))\cong \moduli_{\GL(n,\C)}(Q)$ embeds into $\moduli_{W}(Q)$ as a dense subset.
\end{cor}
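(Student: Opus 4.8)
The plan is to promote the $G_W$-equivariant inclusion $\varphi\colon\fnc_G(Q)\to\rep_W(Q)$, whose image is the open dense invariant subset $\rep_W^{inv}(Q)$, to an open immersion of the corresponding quotients. Since Corollary \ref{invertiblequiver} already gives $\X_r(\GL(n,\C))\cong\moduli_{\GL(n,\C)}(Q)$, and since $\moduli_{\GL(n,\C)}(Q)=\fnc_G(Q)\quot\gauge_G(Q)$ is identified by $\varphi$ with $\rep_W^{inv}(Q)\quot G_W=\moduli_W^{inv}(Q)$, it suffices to show that the morphism $\bar\varphi\colon\moduli_W^{inv}(Q)\to\moduli_W(Q)$ induced by $\varphi$ is an open immersion with dense image. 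First I would record the elementary identity $\det((g\cdot z)(a))=\det g_{h_a}\,\det z(a)\,\det g_{t_a}^{-1}$ for every arrow $a$: it shows that if a single component $z(a)$ is non-invertible, then the regular function $w\mapsto\det w(a)$ vanishes on the whole orbit $G_W\cdot z$, hence by continuity on $\overline{G_W\cdot z}$. Thus invertibility in a fixed component is inherited by orbit closures.

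The central step is to prove that $\rep_W^{inv}(Q)$ is saturated for the good quotient $\pi\colon\rep_W(Q)\to\moduli_W(Q)$, i.e. $\pi^{-1}(\pi(\rep_W^{inv}(Q)))=\rep_W^{inv}(Q)$. Let $y$ satisfy $\pi(y)=\pi(x)$ for some $x\in\rep_W^{inv}(Q)$, and let $O$ be the unique closed $G_W$-orbit in their common fiber, so that $O\subseteq\overline{G_W\cdot x}$ and $O\subseteq\overline{G_W\cdot y}$. Working inside the affine variety $\rep_W^{inv}(Q)$, the fiber of its good quotient over the image of $x$ contains a unique orbit closed in $\rep_W^{inv}(Q)$; by Theorem \ref{thm:preserve-closed} this orbit is also closed in $\rep_W(Q)$, and being contained in $\overline{G_W\cdot x}$ it must coincide with $O$. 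Hence $O\subseteq\rep_W^{inv}(Q)$. If now $y$ had a non-invertible component $y(a_0)$, the observation of the previous paragraph would force $\det w(a_0)=0$ for all $w\in\overline{G_W\cdot y}\supseteq O$, contradicting $O\subseteq\rep_W^{inv}(Q)$. Therefore $y\in\rep_W^{inv}(Q)$, and saturation follows. Alternatively, one re-runs the weight computation of Theorem \ref{thm:preserve-closed} directly on a one-parameter subgroup degenerating $x$ onto $O$, where super-cyclicity forces all determinant weights equal and hence $O$ invertible.

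With saturation in hand, the standard fact that a good quotient restricts to a good quotient over the image of a saturated open invariant subset (see \cite{MFK}) shows that $\pi(\rep_W^{inv}(Q))$ is open in $\moduli_W(Q)$ and that $\bar\varphi$ is an isomorphism onto it; in particular $\bar\varphi$ is the desired embedding. Density is then automatic: $\rep_W^{inv}(Q)$ is dense in $\rep_W(Q)$ and $\pi$ is continuous and surjective, so its image $\pi(\rep_W^{inv}(Q))=\bar\varphi(\moduli_{\GL(n,\C)}(Q))$ is open and dense. I expect the saturation step to be the main obstacle: an injective morphism of quotient varieties need not be an immersion, and formation of GIT quotients does not commute with passage to an open subvariety --- precisely the failure flagged just before Theorem \ref{thm:preserve-closed}. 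It is the super-cyclic hypothesis, channeled through that theorem, which prevents an invertible closed orbit from degenerating to or arising from the non-invertible locus, and so upgrades the bare continuous injection to an open dense embedding.
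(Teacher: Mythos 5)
Your proof is correct, and it rests on the same two pillars as the paper's --- Theorem \ref{thm:preserve-closed} and Corollary \ref{invertiblequiver} --- but the quotient-level argument is genuinely different and yields more. The paper's proof is two sentences: since closed $\gauge_G(Q)$-orbits go to closed $G_W$-orbits and points of affine GIT quotients correspond to closed orbits, one obtains an injective morphism $\moduli_{\GL(n,\C)}(Q)\to\moduli_W(Q)$, and density is declared clear. You instead prove that the invertible locus $\rep_W^{inv}(Q)$ is saturated for $\pi\colon\rep_W(Q)\to\moduli_W(Q)$ --- using the determinant identity, the uniqueness of the closed orbit in each fiber, and Theorem \ref{thm:preserve-closed} to identify the orbit closed in the affine variety $\rep_W^{inv}(Q)$ with the closed orbit $O$ of the ambient fiber --- and then invoke the standard fact that a good quotient restricts to a good quotient over the image of a saturated open invariant subset. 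What this buys is strictly stronger output: the image is open and $\bar\varphi$ is an isomorphism onto it (an open immersion), whereas the paper's pointwise argument, taken literally, produces only an injective morphism; as you rightly flag, an injective morphism of varieties need not be an isomorphism onto its image, so your saturation step is precisely what justifies the word ``embeds'' at the level of varieties rather than of sets. One small wording slip: after the determinant identity you summarize that ``invertibility in a fixed component is inherited by orbit closures,'' but what the identity shows (and what you actually use) is that \emph{non}-invertibility of a fixed component propagates to the whole orbit closure; every deduction you draw from it is the correct one, only that summary sentence is stated backwards.
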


\begin{proof}
Apply the above theorem and the Equivalence Theorem to get an embedding.  The density is clear.
\end{proof}

The following proposition is a converse to this result.

\begin{prop}\label{pro:Not-closed}Let
$Q$ be a quiver and $v\in Q_{V}$ be a sink or a source. Let $a_{1},...,a_{k}$
denote the collection of all arrows connecting to $v$. Assume that
$x\in \rep_{W}(Q)$ is such that $x(a_{j})\neq 0$ for some $1\leq j\leq k$. Then, the orbit of
$x$ inside $\rep_{W}(Q)$ is not closed.
\end{prop}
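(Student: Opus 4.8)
The plan is to exhibit a one-parameter subgroup of $G_W(Q)$ that degenerates $x$ to a representation lying in the closure of its orbit but not in the orbit itself; since a limit point of the orbit that is not in the orbit witnesses non-closedness, this suffices.

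Concretely, I would define $\lambda:\C^{*}\to G_W(Q)$ supported only at the vertex $v$. Set $\lambda_v(t)=t^{\epsilon}\,\id_W$, where $\epsilon=+1$ if $v$ is a sink and $\epsilon=-1$ if $v$ is a source, and set $\lambda_w(t)=\id_W$ for every other vertex $w\neq v$. I would then note that a sink or a source has no loops at $v$ (a loop both enters and exits $v$, contradicting the defining property of an end); this is the only place where the sink/source hypothesis is genuinely used.

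Next I would compute the action component-wise. For any arrow $b\in Q_A$ not incident to $v$, both endpoints carry $\id_W$, so $(\lambda(t)\cdot x)(b)=x(b)$. For each arrow $a_i$ incident to $v$: if $v$ is a sink then $h_{a_i}=v$ and $t_{a_i}\neq v$, whence $(\lambda(t)\cdot x)(a_i)=\lambda_v(t)\,x(a_i)\,\id_W=t\,x(a_i)$; if $v$ is a source then $t_{a_i}=v$ and $h_{a_i}\neq v$, whence $(\lambda(t)\cdot x)(a_i)=\id_W\,x(a_i)\,\lambda_v(t)^{-1}=t\,x(a_i)$. In both cases every arrow at $v$ is scaled by $t$, so the limit $x':=\lim_{t\to0}\lambda(t)\cdot x$ exists in $\rep_W(Q)$, with $x'(a_i)=0$ for all arrows incident to $v$ and $x'(b)=x(b)$ otherwise.

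Finally I would check that $x'\notin G_W(Q)\cdot x$. Since every component of an element of $G_W(Q)$ is invertible, the assignment $x(a)\mapsto g_{h_a}x(a)g_{t_a}^{-1}$ carries a nonzero matrix to a nonzero matrix; hence every point of the orbit of $x$ has nonzero $a_j$-component, because $x(a_j)\neq0$ by hypothesis. But $a_j$ is incident to $v$, so $x'(a_j)=0$, forcing $x'\notin G_W(Q)\cdot x$. As $x'$ lies in the closure of the orbit yet not in the orbit, the orbit of $x$ is not closed, as required. The argument is essentially routine; the only point requiring care is choosing the correct sign $\epsilon$ in the sink versus source cases and recording that an end carries no loops, so that each arrow at $v$ is genuinely scaled rather than fixed by the central torus.
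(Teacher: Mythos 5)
Your proof is correct and follows essentially the same route as the paper: a one-parameter family of scalar matrices supported at $v$ degenerates every arrow incident to $v$ to zero, and the observation that $g_{h_a}x(a)g_{t_a}^{-1}\neq 0$ whenever $x(a)\neq 0$ shows the limit point lies in the orbit closure but outside the orbit. If anything, yours is slightly streamlined: choosing $\lambda_v(t)=t^{-1}\id_W$ at a source handles both cases with a single limit $t\to 0$ (the paper instead sends $t\to\infty$), and you stop as soon as the limit point outside the orbit is exhibited, whereas the paper appends a further (logically unnecessary) degeneration and an appeal to the uniqueness of closed orbits in extended orbits.
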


\begin{proof}
Let $v$ be a sink. Then, by definition $h_{a_{j}}=v$, for all $j=1,...,k$.
The action of $g\in G_{W}$ on $x\in \rep_{W}(Q)$ takes the values\[
(g\cdot x)(a_{j})=g_{v}x(a_{j})g_{t_{a_{j}}}^{-1},\quad j=1,...,k.\]
If we take $g_{v}$ of the form $g_{v}=tI_{W}\in \mathsf{GL}(W)$, for some
$t\in\mathbb{C}^{*}$ and $I_{W}$ is the identity on $W$, then we
obtain\[
\lim_{t\to0}(g\cdot x)(a_{j})=0,\quad j=1,...,k.\]
 
Recall that the extended orbit of $x$ is the union of all orbits $G_{W}\cdot x'$ such that 
$\overline{G_{W}\cdot x}\cap\overline{G_{W}\cdot x'}\neq\emptyset$.

Observe that for any $\dim W \times \dim W$ matrix $x$, and $g,h\in \GL(W)$ if $gxh^{-1}=0_W$, then $x=g^{-1}0_Wh=0_W$.  The corollary of this 
observation is that any representation of a quiver with a non-zero marking on the $j^{\text{th}}$ arrow cannot be in the (non-extended) $G_W$-orbit of 
any representation having a marking of the zero matrix on the $j^{\text{th}}$ arrow.

Writing $x\in R_{W}(Q)$ as $(x(a_{1}),...,x(a_{k}),x(a_{k+1}),...,x(a_{N_{A}}))\in G^{N_{A}}$
we see that the additive representation $y=(0,...,0,x(a_{k+1}),...,x(a_{N_{A}}))\in R_{W}(Q)$
is not in the orbit of $x$ but it is in its closure. Note that $g_{v}=tI_{W}$
only acts on $x(a_{1}),...,x(a_{k})$ and not on other arrows. 

The same argument works for a source by considering the limit as $t\to\infty$.

Repeating this procedure for every sink and for every source, we end
up with an additive representation of the form $z=(0,...,0,x(a_{m}),...,x(a_{N_{A}}))$
where the only non-zero entries correspond to arrows which do not
connect to any sink or source. All these arrows belong to oriented
cycles. Because of this $G_{W}\cdot z$ is indeed a closed orbit
by Theorem \ref{thm:preserve-closed}.  We have showed that $z\in\overline{G_{W}\cdot x}$.
Since there is only one closed orbit in every extended orbit and $G_{W}\cdot x\neq G_{W}\cdot z$,
we conclude that the orbit $G_{W}\cdot x$ is not closed in $R_{W}(Q)$,
as wanted.

\end{proof}

In contrast to this situation, closed orbits are not preserved under $\varphi$
in quivers with ends, as we now show.

\begin{thm}
Let $Q$ be
a quiver and $G=\mathsf{GL}(W)$, and consider the inclusion $\varphi:\mathcal{F}_{G}(Q)\to \rep_{W}(Q)$
defined in \eqref{eq:embedding}. If $Q$ has at least one end, then
no orbit in $\varphi(\mathcal{F}_{G}(Q))=\rep_{W}^{inv}(Q)$ is closed.
In other words, for quivers with ends, every orbit of the form $\varphi(f)$
for $f\in\mathcal{F}_{G}(Q)$ is not closed.
\end{thm}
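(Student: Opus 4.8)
The plan is to reduce the statement immediately to Proposition \ref{pro:Not-closed}. The crucial observation is that every point in the image $\varphi(\mathcal{F}_{G}(Q)) = \rep_{W}^{inv}(Q)$ assigns to \emph{each} arrow an invertible matrix, and in particular a nonzero one; this is exactly what is needed to feed into the nonclosedness criterion of that proposition, applied at whichever end $Q$ is assumed to possess.

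First I would fix an end $v \in Q_{V}$ (a sink or source), which exists by hypothesis, and let $a_{1},\dots,a_{k}$ be the collection of all arrows connecting to $v$ in the notation of Proposition \ref{pro:Not-closed}. Here I would record that $k \geq 1$: since in this section $Q$ is connected and has at least one arrow, the vertex $v$ cannot be isolated, as otherwise the arrows of $Q$ would all lie in a different connected component. Next I would take an arbitrary point of $\rep_{W}^{inv}(Q)$; because $\varphi$ is a bijection onto $\rep_{W}^{inv}(Q)$, it has the form $x = \varphi(f)$ for some $f \in \mathcal{F}_{G}(Q)$. By the definition of $\varphi$ in \eqref{eq:embedding}, $x(a) = f(a) \in \GL(W)$ for every arrow $a$, so in particular $x(a_{1}) \in \GL(W)$ and hence $x(a_{1}) \neq 0$. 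Applying Proposition \ref{pro:Not-closed} at the end $v$ with this representation $x$ then yields that the $G_{W}$-orbit of $x$ is not closed in $\rep_{W}(Q)$. Since $x$ was an arbitrary element of $\rep_{W}^{inv}(Q) = \varphi(\mathcal{F}_{G}(Q))$, no orbit in the image of $\varphi$ is closed, which is the assertion.

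I do not expect any genuine obstacle, as the theorem is essentially an immediate corollary of Proposition \ref{pro:Not-closed}. The only point requiring a moment's care is the harmless verification that the chosen end $v$ is actually incident to at least one arrow, so that the hypothesis ``$x(a_{j}) \neq 0$ for some $j$'' of that proposition can be realized; this is guaranteed by the standing connectivity assumption together with $Q$ having at least one arrow. Everything else follows from the defining property of $\varphi$, namely that it lands precisely in the invertible (hence everywhere-nonzero) representations.
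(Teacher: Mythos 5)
Your proposal is correct and matches the paper's own proof: both reduce immediately to Proposition \ref{pro:Not-closed} by observing that every $x=\varphi(f)$ has $x(a)\in\GL(W)$, hence $x(a)\neq 0$, at any arrow incident to the given end. The only addition is your explicit check that an end is incident to at least one arrow, which the paper leaves to its standing connectivity assumption.
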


\begin{proof}
If $Q$ has one sink or source, $f\in\mathcal{F}_{G}(Q)$, and $x=\varphi(f)\in \rep_{W}$
then it is clear that $x(a)\not=0$, for all arrows, since $x(a)\in\mathsf{GL}(W)$.
The result then follows from Proposition \ref{pro:Not-closed}.
\end{proof}

\begin{cor}\label{densityproblem}
In a quiver
with ends, the semisimple additive representations do not form a dense
set.\end{cor}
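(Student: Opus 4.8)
The plan is to combine the theorem just proved with the orbit-closure characterization of semisimplicity recorded earlier, together with the density of the invertible locus. First I would recall that, by the general theorem on representations of algebras cited as \cite{A}, a point $x\in\rep_{W}(Q)$ has a closed $G_{W}$-orbit precisely when it is semisimple; hence $\rep_{W}^{ss}$ is exactly the set of representations in $\rep_{W}(Q)$ whose $G_{W}$-orbit is closed.

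Next I would invoke the preceding theorem: since $Q$ has an end, no orbit lying in $\rep_{W}^{inv}(Q)=\varphi(\fnc_{G}(Q))$ is closed. Combining this with the characterization above, no invertible representation can be semisimple, so that $\rep_{W}^{ss}\cap\rep_{W}^{inv}(Q)=\emptyset$. Consequently $\rep_{W}^{ss}\subset \rep_{W}(Q)\setminus\rep_{W}^{inv}(Q)$.

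Finally I would use that $\rep_{W}^{inv}(Q)$ is open and dense in $\rep_{W}(Q)$, and nonempty (it contains, for instance, the representation assigning an invertible matrix to every arrow). Its complement is therefore a closed, proper, nowhere dense subset of $\rep_{W}(Q)$. Since $\rep_{W}^{ss}$ is contained in this complement, its closure satisfies $\overline{\rep_{W}^{ss}}\subseteq \rep_{W}(Q)\setminus\rep_{W}^{inv}(Q)\subsetneq \rep_{W}(Q)$, so $\overline{\rep_{W}^{ss}}\neq \rep_{W}(Q)$ and the semisimple additive representations are not dense, as claimed.

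I do not anticipate a serious obstacle: the substantive work was already carried out in Proposition \ref{pro:Not-closed} and the preceding theorem, and this statement is a formal consequence. The only points requiring care are applying the semisimple/closed-orbit equivalence of \cite{A} in the correct direction, and observing that the complement of a dense open subset is nowhere dense, which is what immediately forbids density of any subset of that complement.
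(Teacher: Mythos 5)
Your proposal is correct and is essentially the paper's own argument: both rest on the preceding theorem (no orbit in $\rep_{W}^{inv}(Q)$ is closed when $Q$ has an end), the semisimple-iff-closed-orbit characterization from \cite{A}, and the openness and density of $\rep_{W}^{inv}(Q)$. The only difference is presentational: the paper argues by contradiction (a dense $\rep_{W}^{ss}$ would meet the nonempty open set $\rep_{W}^{inv}$), while you phrase the same fact contrapositively via nowhere-density of the complement.
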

\begin{proof}
Let $Q$ have
at least one end. Suppose that $\rep_{W}^{ss}$ was dense in $\rep_{W}$.
Since $\rep_{W}^{inv}\subset \rep_{W}$ is open and dense, the space $\rep_{W}^{ss}\cap \rep_{W}^{inv}$
would be non-empty. But we saw that any $\varphi(f)\in \rep_{W}^{inv}$
does not have a closed orbit. So it is not semisimple.
\end{proof}

\subsection{Unimodular Additive Quiver Representations}

There is another important situation when $G$-valued orbits will give rise to closed
orbits inside a variation of additive quiver representations.

\begin{defn}
Let $Q$ be
a quiver and $W$ a complex vector space. We say that $x\in \rep_{W}(Q)$
is a unimodular (or special) quiver representation if $x(a)\in \mathsf{SL}(W)=\{X\in \mathsf{GL}(W):\det X=1\}$
for every arrow $a\in Q_{A}$. Note that this restriction is well
defined since the determinant is independent of the choice of basis
for $W$. Unimodular quiver representations of $Q$ will be denoted by
$\rep_{W}^{1}(Q)$.\end{defn}

We now consider
$G$-valued quiver representations with $G=\mathsf{SL}(W)$. Naturally, we
have the same inclusion\[
\varphi:\mathcal{F}_{G}(Q)\to \rep_{W}(Q)\]
still denoted by the same letter. On both spaces of representations
we now have the action of the group $S_{W}=\mathsf{SL}(W)^{N_{V}}$, acting
in the same way. The image under $\varphi$ will be exactly 
$\rep_{W}^{1}(Q)\subset \rep_{W}^{inv}(Q)\subset \rep_{W}(Q)$.

\begin{prop}

Let $G=\mathsf{SL}(W)$
and $Q$ be any quiver. Then $f\in\mathcal{F}_{G}$ has a closed
orbit if and only if $x=\varphi(f)$ has a closed orbit in $\rep_{W}(Q)$.\end{prop}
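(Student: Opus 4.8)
The plan is to exploit the one structural fact that distinguishes $\SL(W)$ from $\GL(W)$: whereas $\GL(W)$ is open but not closed in $\mathsf{End}(W)$, the group $\SL(W)=\{M\in\mathsf{End}(W)\ :\ \det M=1\}$ is a \emph{closed} subvariety of $\mathsf{End}(W)$, being the zero locus of $\det M-1$. Consequently the image $\varphi(\fnc_G(Q))=\rep_W^1(Q)$, cut out inside $\rep_W(Q)=\bigoplus_{Q_A}\mathsf{End}(W)$ by the equations $\det x(a)=1$ for every arrow $a$, is a closed subvariety. Moreover $\varphi$ restricts to an $S_W$-equivariant isomorphism of varieties $\fnc_G(Q)=\SL(W)^{N_A}\cong\rep_W^1(Q)$; in particular it is a homeomorphism onto its image intertwining the two $S_W$-actions.

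First I would record that the $S_W$-orbit of $x=\varphi(f)$ never leaves $\rep_W^1(Q)$: for $g=(g_v)\in S_W=\SL(W)^{N_V}$ one has $\det\big((g\cdot x)(a)\big)=\det(g_{h_a})\det(x(a))\det(g_{t_a})^{-1}=\det x(a)$, so the determinant on each arrow is preserved and the whole orbit stays inside $\rep_W^1(Q)$. Thus $S_W\cdot x=\varphi(S_W\cdot f)$ is an orbit lying in the closed subvariety $\rep_W^1(Q)\subset\rep_W(Q)$. Next I would invoke the elementary topological fact that for a chain $A\subset C\subset X$ with $C$ closed in $X$, the set $A$ is closed in $X$ if and only if it is closed in the subspace $C$ (one direction is $A=A\cap C$; the other uses that $C$ is closed). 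Applying this with $A=S_W\cdot x$, $C=\rep_W^1(Q)$, and $X=\rep_W(Q)$ shows that $S_W\cdot x$ is closed in $\rep_W(Q)$ if and only if it is closed in $\rep_W^1(Q)$.

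To finish, since $\varphi$ is an $S_W$-equivariant homeomorphism $\fnc_G(Q)\cong\rep_W^1(Q)$, the orbit $S_W\cdot f$ is closed in $\fnc_G(Q)$ if and only if $S_W\cdot x=\varphi(S_W\cdot f)$ is closed in $\rep_W^1(Q)$. Chaining the two equivalences yields the claim. The argument is insensitive to whether one uses the Zariski or Euclidean topology, since $\rep_W^1(Q)$ is closed in either, so it applies verbatim to the notion of closed orbit relevant to the GIT quotient.

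There is no serious obstacle here; the entire content sits in the opening observation that $\SL(W)$ is closed in $\mathsf{End}(W)$, which is exactly what fails for $\GL(W)$ and is why the preceding theorem reached the opposite conclusion (no invertible orbit is closed once $Q$ has an end). The only point requiring mild care is to confirm that the gauge action in force is that of $S_W$ rather than the larger $G_W=\GL(W)^{N_V}$: it is precisely the determinant constraint on the gauge group that forbids the degenerations $g_v=tI_W$ used in Proposition \ref{pro:Not-closed}, and hence keeps the orbit inside the closed set $\rep_W^1(Q)$.
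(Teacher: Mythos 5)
Your proposal is correct and takes essentially the same route as the paper: both proofs rest on the observation that $\SL(W)$, and hence $\rep_W^1(Q)=\varphi(\fnc_G(Q))$, is closed in $\rep_W(Q)$, so that $\varphi$ is an $S_W$-equivariant isomorphism onto a closed subvariety and an orbit inside $\rep_W^1(Q)$ is closed there if and only if it is closed in $\rep_W(Q)$. You simply spell out the details the paper leaves implicit, namely the determinant computation keeping $S_W$-orbits inside $\rep_W^1(Q)$ and the closed-in-a-closed-subspace lemma.
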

\begin{proof}
If $f$ has
a closed orbit $O_{f}$ then, since $\varphi$ is an embedding of
$\mathcal{F}_{G}(Q)$ as a subvariety, the orbit $O_{x}$ of $x=\varphi(f)$
is closed in $\rep_{W}(Q)$. Conversely, if $x=\varphi(f)$ has a closed
orbit in $\rep_{W}$, then the orbit is also closed in the closed subset
$\rep_{W}^{1}$, so $f$ has a closed orbit in $\mathcal{F}_{\mathsf{SL}(W)}$.\end{proof}

In particular, this shows that, for $G = \SL(W)$, we have
$$\moduli_G := \fnc_G \quot S_W \cong \rep^1_W \quot S_W = \moduli_W^1.$$

\begin{thm}\label{unimodularsubvariety}
Let $Q$ be
a super-cyclic quiver. Then, the moduli space $\moduli_{W}^{1}:=\rep_{W}^{1}\quot S_{W}$
is naturally a subvariety of $\moduli_{W}=\rep_{W}\quot G_{W}$.\end{thm}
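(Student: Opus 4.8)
The plan is to build the embedding from the inclusion $\varphi\colon\rep_W^1(Q)\hookrightarrow\rep_W(Q)$ and descend it to the quotients. Since $\varphi$ is a closed immersion that is equivariant for $S_W=\SL(W)^{N_V}$, and since $S_W\subset G_W=\GL(W)^{N_V}$, the composite $\rep_W^1\hookrightarrow\rep_W\to\moduli_W$ is $S_W$-invariant; by the universal property of the categorical quotient it factors through a morphism $\Psi\colon\moduli_W^1=\rep_W^1\quot S_W\to\moduli_W=\rep_W\quot G_W$. The content of the theorem is then that $\Psi$ is an isomorphism onto a subvariety of $\moduli_W$, so the work is to establish injectivity and then that $\Psi$ is a closed immersion.

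For injectivity I would first isolate an elementary determinant lemma valid for any connected $Q$: if $x\in\rep_W^1$ and $g\in G_W$ satisfy $g\cdot x\in\rep_W^1$, then $g\cdot x\in S_W\cdot x$. Indeed $\det((g\cdot x)(a))=\det(g_{h_a})\det(g_{t_a})^{-1}$ (using $\det x(a)=1$) must equal $1$ on every arrow, so $\det g_v$ is constant across each arrow and hence constant on all of $Q_V$ by connectedness of the underlying graph; writing the common value as $\delta$ and factoring $g_v=\delta^{1/n}s_v$ with $s_v\in\SL(W)$ shows the global scalar $\delta^{1/n}$ cancels in the action, so $g\cdot x=s\cdot x$ for some $s\in S_W$. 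Thus $(G_W\cdot x)\cap\rep_W^1=S_W\cdot x$. Next, for a super-cyclic quiver a closed $S_W$-orbit in $\rep_W^1$ has a closed $G_W$-orbit in $\rep_W$: this follows from the weight argument in the proof of Theorem \ref{thm:preserve-closed} adapted to $\SL(W)$, since a destabilizing one-parameter subgroup $\lambda$ of $G_W$ produces determinant weights $\alpha(v)$ with $\alpha(h_a)\ge\alpha(t_a)$ on every arrow, which super-cyclicity forces to be constant, so the central part of $\lambda$ acts trivially and any limit already lies in $\overline{S_W\cdot x}=S_W\cdot x$. Combining these, if $x,y\in\rep_W^1$ have closed $S_W$-orbits and $\Psi([x])=\Psi([y])$, then the closed orbits $G_W\cdot x$ and $G_W\cdot y$ coincide, whence $y\in(G_W\cdot x)\cap\rep_W^1=S_W\cdot x$; so $[x]=[y]$ and $\Psi$ is injective.

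To upgrade injectivity to a closed immersion I would argue at the level of invariants, showing the comorphism $\Psi^{\ast}\colon\C[\rep_W]^{G_W}\to\C[\rep_W^1]^{S_W}$, which is just restriction of functions, is surjective. By reductivity of $S_W$, restriction along the closed $S_W$-stable set $\rep_W^1$ yields a surjection $\C[\rep_W]^{S_W}\twoheadrightarrow\C[\rep_W^1]^{S_W}$, so it suffices to see that every $S_W$-invariant restricts on the unimodular locus to the restriction of a $G_W$-invariant. Here super-cyclicity enters decisively: by the first fundamental theorem for $\GL$-invariants of quiver representations, $\C[\rep_W]^{G_W}$ is generated by traces of oriented cycles, and the only $S_W$-invariants that are not already $G_W$-invariant are the determinantal ones (determinants of products along paths, and bracket invariants), each of which has nonzero weight for the central torus $T=(\C^{\ast})^{N_V}\subset G_W$ and restricts to a constant on $\rep_W^1$ because $\det x(a)\equiv 1$ there and, by super-cyclicity, every arrow lies on an oriented cycle. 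Thus $\C[\rep_W^1]^{S_W}$ is generated by restrictions of cycle traces, all of which lie in the image of $\Psi^{\ast}$, giving surjectivity and hence that $\Psi$ is a closed immersion onto a subvariety.

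The main obstacle is precisely this last step, and the difficulty is structural: $\rep_W^1$ is $S_W$-stable but \emph{not} $G_W$-stable, so one cannot simply descend a $G_W$-invariant closed subvariety through $\rep_W\to\moduli_W$, and super-cyclicity is genuinely needed — for quivers with ends the earlier results show closed orbits are not preserved by $\varphi$, which would already break injectivity. Consequently the crux is the comparison of the two invariant rings on the unimodular locus; I expect the cleanest rigorous route is the generation-by-cycle-traces argument above, with the verification that all determinantal $\SL$-invariants trivialize on $\rep_W^1$ being the point that must be argued with care.
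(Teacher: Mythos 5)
Your proposal splits into two claims: (i) the natural morphism $\Psi:\moduli_W^1\to\moduli_W$ is injective, and (ii) it is a closed immersion. Part (i) is correct, and is in fact more careful than the paper's own treatment: the paper only verifies that two unimodular representations in the same $G_W$-orbit are in the same $S_W$-orbit (via determinant equalities along oriented cycles covering $Q_A$, then a global rescaling) and from this declares the inclusion of orbit spaces, whereas you also address what injectivity on GIT points actually requires, namely that closed $S_W$-orbits in $\rep_W^1$ have closed $G_W$-orbits in $\rep_W$; your adaptation of the weight argument of Theorem \ref{thm:preserve-closed} (with the reparametrization $t\mapsto t^n$ needed to split the central scalar off the one-parameter subgroup) supplies exactly this. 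Your determinant lemma is also right, and correctly isolates that only connectedness is used there, super-cyclicity entering only through orbit-closedness. Since what the paper proves under the phrase ``naturally a subvariety'' is precisely this injective morphism, your first two paragraphs already recover the theorem as the paper understands it.

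The genuine gap is in (ii), exactly where you predicted difficulty, and your proposed justification is false: it is not true that the determinantal $S_W$-invariants restrict to constants on $\rep_W^1$. Concretely, take $Q_V=\{u,v\}$, $Q_A=\{a_1,a_2:u\to v,\ b:v\to u\}$ (connected, super-cyclic), $\dim W=2$, and write $x_i=x(a_i)$, $y=x(b)$. Then $f(x_1,x_2,y)=\det(x_1+x_2)$ transforms by the character $\det(g_v)\det(g_u)^{-1}$, so it is $S_W$-invariant and not $G_W$-invariant; yet on $\rep_W^1$ it equals $4$ at $(I,I,I)$ and $2+t+t^{-1}$ at $\bigl(I,\mathrm{diag}(t,t^{-1}),I\bigr)$, hence is not constant. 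What is true --- and what must actually be proved --- is that such restrictions lie in the subalgebra generated by restricted cycle traces: here $\det(x_1+x_2)=\det(yx_1+yx_2)$ on $\rep_W^1$ because $\det y=1$, and $\det(A+B)$ is a polynomial in traces of words in the loops $A=yx_1$, $B=yx_2$. The general repair uses super-cyclicity more sharply: each arrow $a$ lies on an oriented cycle $c_a=p_aa$; on $\rep_W^1$ one has $\det x(c_a)=1$, so Cayley--Hamilton gives $x(a)^{-1}=q\bigl(x(c_a)\bigr)\,x(p_a)$ with coefficients that are polynomials in the cycle traces $\tr\bigl(x(c_a)^k\bigr)$, i.e.\ restrictions of $G_W$-invariants, and each monomial $x(c_a)^j x(p_a)$ is the value of an oriented walk from $h_a$ to $t_a$. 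This eliminates every inverse arrow, and combined with the collapsing isomorphism $\rep_W^1\quot S_W\cong\hom(F_r,\SL(W))\quot\SL(W)$ and Procesi's theorem it yields surjectivity of $\C[\rep_W]^{G_W}\to\C[\rep_W^1]^{S_W}$. As written, however, your closed-immersion step does not go through; note also that the paper never attempts this step, so it is not needed to match the paper's proof.
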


\begin{proof}
Let $x,x'\in \rep_{W}^{1}$
be two unimodular quiver representations. Then, we claim that $x$ and
$x'$ are in the same $S_{W}$ orbit if and only if they are in the
same $G_{W}$ orbit. Clearly, the $S_{W}$-orbit sits inside the
$G_{W}$-orbit. Now suppose $(x_{1},x_{2},...,x_{n})\in \mathsf{SL}(W)^{n}$
are the values of $x\in \rep_{W}^{1}$ on the arrows of an oriented cycle
of $Q$ of length $n$, and that $(x'_{1},x'_{2},...,x'_{n})$ are the corresponding
values of $x'\in \rep_{W}^{1}$. Assuming $x$ and $x'$ are in the same
$G_{W}$-orbit there are matrices $g_{1},g_{2},...\in \mathsf{GL}(W)$ such
that\[
(x'_{1},x'_{2},...,x'_{n})=(g_{1}x_{1}g_{2}^{-1},g_{2}x_{2}g_{3}^{-1},...,g_{n-1}x_{n}g_{n}^{-1}),\]
Now, since $x_{j},x_{j}'$ have determinant one, we conclude that
$\det g_{1}=\det g_{2}=...=\det g_{n}$. Also, since $Q$ has no ends,
any arrow is in one of these oriented cycles. For an oriented cycle $C$ let $A(C)$ be the collection of arrows in $C$.  
Since the quiver is connected and has has no ends, there is a collection
of cycles $C_1,...,C_k$ so that $\cup_{i=1}^k A(C_i)=Q_A$ and $C_i\cap C_{i+1}\not=\emptyset$.  Thus, we can just rescale
all the terms of $g\in G_{W}$ by the same scalar in order to obtain
an element $g'\in S_{W}$ such that $g'\cdot x=x'$ as wanted.

This shows
that we have a well defined inclusion of orbit spaces, \[
\moduli_{W}^{1}=\rep_{W}^{1}\quot S_{W}\cong\rep_{W}^{1}\quot G_{W}\subset \rep_{W}\quot G_{W}=\moduli_W,\]
as wanted.\end{proof}

\begin{cor}\label{quivercharvar}
Let $Q$ be a super-cyclic quiver, $d=\dim W$ and $b_{1}(Q)=r$. Then, the $\SL(d,\mathbb{C})$-character
variety of a free group of rank $r$ is naturally a subvariety of
the usual moduli of quiver representations with fixed dimension vector
$d$.\end{cor}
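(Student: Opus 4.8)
The plan is to assemble the corollary from three results already in hand, chaining them into a single natural identification. The target is to exhibit $\X_r(\SL(d,\C))=\hom(F_r,\SL(d,\C))\quot\SL(d,\C)$ as a subvariety of $\moduli_W=\rep_W(Q)\quot G_W$, where $d=\dim W$ and $r=b_1(Q)$.

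First I would set $G=\SL(W)\cong\SL(d,\C)$ and invoke the identification recorded just before Theorem \ref{unimodularsubvariety}: for $G=\SL(W)$ the embedding $\varphi$ realizes $\moduli_G(Q)=\fnc_G(Q)\quot S_W\cong\rep^1_W(Q)\quot S_W=\moduli^1_W(Q)$. This splits the problem into two tasks, namely locating $\moduli_G(Q)$ as a character variety and locating $\moduli^1_W(Q)$ inside $\moduli_W(Q)$.

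Next I would apply the Equivalence Theorem (Theorem \ref{charvarquiverequal}) with this choice of $G$. Since $Q$ is super-cyclic and is taken without relations in this setting, collapsing all non-cycle arrows produces a connected one-vertex quiver with exactly $r=b_1(Q)$ loops, whose $\SL(d,\C)$-valued quiver moduli is precisely the $\SL(d,\C)$-character variety of the free group $F_r$. Hence $\moduli_G(Q)\cong\X_r(\SL(d,\C))$ as affine varieties, with the isomorphism being the natural one induced by the collapsing maps. Finally I would invoke Theorem \ref{unimodularsubvariety} itself, which (again using that $Q$ is super-cyclic) furnishes the natural inclusion $\moduli^1_W(Q)=\rep^1_W(Q)\quot S_W\cong\rep^1_W(Q)\quot G_W\subset\rep_W(Q)\quot G_W=\moduli_W(Q)$ as a subvariety. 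Composing the three identifications gives $\X_r(\SL(d,\C))\cong\moduli_G(Q)\cong\moduli^1_W(Q)\hookrightarrow\moduli_W(Q)$, which is the assertion.

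The one point requiring care — and which I regard as the main, though mild, obstacle — is checking that the super-cyclic hypothesis simultaneously feeds both theorems in exactly the way each needs. The Equivalence Theorem requires $Q$ connected and non-contractible so as to produce a genuine isomorphism onto a character variety rather than a proper embedding into one, while Theorem \ref{unimodularsubvariety} requires no ends so that the determinant-rescaling argument reaches every arrow through a chain of overlapping oriented cycles $C_1,\dots,C_k$ with $C_i\cap C_{i+1}\neq\emptyset$. Both conditions are guaranteed for super-cyclic $Q$, so nothing beyond this observation is needed: the naturality of each of the three steps makes the composite inclusion canonical, and the proof reduces to citing Theorem \ref{charvarquiverequal}, the proposition preceding Theorem \ref{unimodularsubvariety}, and Theorem \ref{unimodularsubvariety} in sequence.
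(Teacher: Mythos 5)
Your proposal is correct and follows essentially the same route as the paper: the paper's proof is precisely the commutative diagram chaining $\X_r(\SL(d,\C))\cong\moduli_G(Q)$ (Equivalence Theorem), $\moduli_G(Q)\cong\moduli^1_W(Q)$ (the closed-orbit proposition for $\SL(W)$), and $\moduli^1_W(Q)\subset\moduli_W(Q)$ (Theorem \ref{unimodularsubvariety}). Your added remark that the super-cyclic hypothesis (together with connectedness) supplies non-contractibility for the Equivalence Theorem and the overlapping-cycles condition for Theorem \ref{unimodularsubvariety} is a correct observation the paper leaves implicit.
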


\begin{proof}
Let $\X_r(G):=\hom(F_r,G)\quot G$ be the $G$-character variety of the free group of rank $r$, denoted $F_r$, with $G=\mathsf{SL}(W)=\SL(d,\C)$.  Then 
Theorem \ref{charvarquiverequal} and the preceding proposition imply that we have a commutative diagram,\[
\begin{array}{cccccc}
&\mathcal{F}_{G} & \cong & \rep_{W}^{1} & \subset & \rep_{W}\\
&\downarrow &  & \downarrow &  & \downarrow\\
\X_r(G)\cong& \mathcal{M}_{G} & \cong & \moduli_{W}^{1} & 
\subset & \moduli_{W}.\end{array}\]
\end{proof}

\begin{thm}[Embedding Theorem]\label{embedcharvarintovectquiv}
Let $G$ be a complex reductive Lie group, and $\Gamma$ be a finitely presented group with $r$ generators.  Then there exists a number 
$d\geq 0$ so that $\X_\Gamma(G)$ embeds as a subvariety in $\moduli_{W}(Q)$ for any quiver $Q$ with no ends, $b_{1}(Q)=r$, and $W$ a complex 
vector space of dimension $d$.
\end{thm}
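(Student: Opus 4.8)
The plan is to realize the claimed embedding as a composition of three maps: an easy ``relations'' map passing from $\Gamma$ to the free group $F_r$, a faithful linearization $G\hookrightarrow\SL(d,\C)$ converting the abstract reductive group into a determinant-one matrix group, and finally the embedding of the resulting $\SL(d,\C)$-character variety into the additive quiver moduli supplied by Corollary \ref{quivercharvar}. The first and third maps are essentially formal given the results already established; the middle map carries all the content.

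First I would fix a faithful rational representation $\tau\colon G\to\SL(d,\C)$, which exists for every complex reductive $G$ (take a faithful representation into $\GL$ and pass to a determinant-one model, as indicated in the Introduction); this choice fixes the integer $d$ and the space $W=\C^{d}$ in the statement. A choice of generators identifies $\hom(\Gamma,G)$ with the closed $\gauge_{G}$-invariant subvariety of $G^{r}=\hom(F_{r},G)$ cut out by the relations of $\Gamma$. Since $G$ is reductive, the Reynolds operator makes the restriction $\C[G^{r}]^{G}\to\C[\hom(\Gamma,G)]^{G}$ surjective, so $\X_{\Gamma}(G)\hookrightarrow\X_{F_{r}}(G)$ is a closed immersion, and the same reasoning gives $\X_{\Gamma}(\SL(d,\C))\hookrightarrow\X_{r}(\SL(d,\C))$. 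Corollary \ref{quivercharvar} then lands $\X_{r}(\SL(d,\C))$ inside $\moduli_{W}(Q)$ for any super-cyclic $Q$ with $b_{1}(Q)=r$ and $\dim W=d$, while the Density Theorem handles the claimed dense image when $G=\GL(n,\C)$.

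It remains to produce and analyze the middle arrow $\X_{\Gamma}(G)\to\X_{\Gamma}(\SL(d,\C))$ induced by $\tau$. On representation spaces $\tau$ gives a closed embedding $\hom(\Gamma,G)\hookrightarrow\hom(\Gamma,\SL(d,\C))$ intertwining the $\gauge_{G}$-action with the restriction of the $S_{W}$-action, so there is a well-defined induced morphism of GIT quotients. Phrasing everything in quiver language via Theorem \ref{charvarquiverequal}, so that $\X_{\Gamma}(G)\cong\moduli_{G}(Q,R)$ and $\X_{r}(\SL(d,\C))\cong\moduli_{\SL(W)}(Q)$, I would adapt the weight argument of Theorem \ref{thm:preserve-closed} together with the determinant bookkeeping in the proof of Theorem \ref{unimodularsubvariety}: the hypothesis $\tau(G)\subset\SL(W)$ forces the relevant weights to agree around every oriented cycle, which should show that closed $\gauge_{G}$-orbits are carried to closed $S_{W}$-orbits and that the image is a (locally) closed subvariety.

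The main obstacle is the \emph{injectivity} of this middle arrow: a priori two distinct closed $\gauge_{G}$-orbits can fuse once one is allowed to conjugate by all of $\SL(d,\C)$ rather than only by $\tau(G)$, and for a careless choice of $\tau$ this genuinely occurs, as the diagonal $\C^{*}\subset\SL(2,\C)$ already identifies $t$ with $t^{-1}$. The force of the phrase ``there exists $d$'' is exactly the freedom to enlarge $\tau$: I would choose the faithful representation so that the $\SL(d,\C)$-invariant word-trace functions separate the closed $\gauge_{G}$-orbits on $G^{r}$, which is possible because the matrix coefficients of a sufficiently rich faithful representation generate $\C[G]$, and hence the conjugation invariants they induce separate closed orbits. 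Establishing this separation property of $\tau$ is the technical heart of the proof; once it is in place, composing the three maps identifies $\X_{\Gamma}(G)$ with a subvariety of $\moduli_{W}(Q)$, as claimed.
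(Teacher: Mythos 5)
Your skeleton is exactly the paper's: fix a faithful unimodular representation $G\hookrightarrow\SL(d,\C)$, use the surjection $F_r\twoheadrightarrow\Gamma$ (the paper misprints this as an epimorphism $\Gamma\to F_r$) to get $\X_\Gamma(G)\subset\X_r(G)$, and finish with Corollary \ref{quivercharvar}, i.e.\ the chain $\X_r(\SL(d,\C))\cong\moduli_{\SL(W)}(Q)\cong\moduli_{W}^{1}(Q)\subset\moduli_{W}(Q)$. Where you differ is that you stop to worry about the middle arrow $\X_r(G)\to\X_r(\SL(d,\C))$, and you are right to worry; this is the most valuable part of your write-up. The paper simply asserts $\X_r(G)\subset\X_r(\SL(W))$ for an \emph{arbitrary} faithful unimodular representation and gives no argument, and your example shows that assertion is false at that level of generality: for $\C^{*}\subset\SL(2,\C)$ via $t\mapsto\mathrm{diag}(t,t^{-1})$, the induced map $\X_1(\C^{*})=\C^{*}\to\X_1(\SL(2,\C))\cong\C$ is $t\mapsto t+t^{-1}$, which identifies $t$ with $t^{-1}$. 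So injectivity of this arrow genuinely depends on the choice of representation, a point on which the paper's proof is silent.

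However, your proposed repair does not close the gap. You argue the needed separation property can be arranged ``because the matrix coefficients of a sufficiently rich faithful representation generate $\C[G]$, and hence the conjugation invariants they induce separate closed orbits.'' That implication is false, and your own counterexample refutes it: the matrix coefficients of $t\mapsto\mathrm{diag}(t,t^{-1})$ are $t$ and $t^{-1}$, which already generate $\C[\C^{*}]=\C[t,t^{-1}]$, yet the induced word-trace functions $t\mapsto t^{n}+t^{-n}$ fail to separate the closed orbits $\{t\}$ and $\{t^{-1}\}$. The real issue is that restricting $\C[\SL(d,\C)^{r}]^{\SL(d,\C)}$ to $G^{r}$ gives only a subalgebra of $\C[G^{r}]^{G}$, and generation of $\C[G]$ by matrix coefficients says nothing about whether that subalgebra separates closed $G$-orbits. (For $\C^{*}$ one can check directly that $t\mapsto\mathrm{diag}(t,t,t^{-2})$ does separate, since equality of all word traces forces equality of the multisets of characters; producing such a representation for an arbitrary complex reductive $G$ --- which is what the existential quantifier on $d$ requires --- is precisely the unresolved technical heart, both in your proposal and, unacknowledged, in the paper's own proof.)
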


\begin{proof}
Since $G$ is algebraic it admits a faithful linear representation and hence admits a faithful unimodular representation 
into $\SL(W)$.  Since we can always find an epimorphism $F_r \to \Gamma$ for some free group $F_r$, we conclude 
$\X_\Gamma(G)\subset \X_r(G)$.

The result follows then from the previous corollary:
$$\X_\Gamma(G)\subset \X_r(G)\subset \X_r(\SL(W))\cong  \mathcal{M}_{\SL(W)}(Q)  \cong  \moduli_{W}^{1}(Q) 
\subset  \moduli_{W}(Q).$$
\end{proof}

\subsection{Relationship with Toric Geometry}\label{toric}

In this appendix we relate some of our discussion to toric geometry.  We use 
\cite{CLS} as a reference.

\subsubsection{$G$-representations of quivers for Abelian $G$}

Let $Q=(Q_{V},Q_{A})$ be a quiver and $G$ a Lie group. We have been
studying the action of the gauge group $\gauge_G(Q):=\fun(Q_{V},G)\cong
G^{\#Q_{V}}$
on the group of $G$-valued representations of $Q$,
$\mathcal{F}_{G}(Q)=\fun(Q_{A},G)\cong G^{\#Q_{A}}$.
In order to try to generalize this type of action, suppose now we
are given maps $\mu,\nu:Q_{A}\to\mathbb{N}_0$ (weight maps) which associate
integer weights $\mu_{a},\nu_{a}\in\mathbb{N}_0$ for each arrow $a\in Q_{A}$
and consider the map\begin{eqnarray*}
\psi:\gauge_G(Q)\times\mathcal{F}_{G}(Q) & \to & \mathcal{F}_{G}(Q)\\
(g,f) & \mapsto &
\psi(g,f)(a):=g(h_{a})^{\mu_{a}}f(a)g(t_{a})^{-\nu_{a}}.\end{eqnarray*}
The following is straightforward.
\begin{lem}
Suppose at least one of the integers $\mu_{a},\nu_{a}$ is not $0$ or
$1$. Then, the map $\psi$ defines a (left) action of $\gauge_G(Q)$
on $\mathcal{F}_{G}(Q)$ if and only if $G$ is Abelian.\end{lem}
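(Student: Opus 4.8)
The plan is to verify the two axioms of a left action directly and to pinpoint the single place where commutativity is forced. The unit axiom is immediate: since $I^{\mu_a}f(a)I^{-\nu_a}=f(a)$ at every arrow, we have $\psi(\mathbf{I},f)=f$. All of the content lies in the compatibility axiom $\psi(g_1g_2,f)=\psi(g_1,\psi(g_2,f))$. First I would evaluate both sides at a fixed arrow $a$ and use that $\gauge_G(Q)$ multiplies vertex-by-vertex, $(g_1g_2)(v)=g_1(v)g_2(v)$, which recasts the axiom as the requirement that
\[
(g_1(h_a)g_2(h_a))^{\mu_a}\,f(a)\,(g_1(t_a)g_2(t_a))^{-\nu_a}=g_1(h_a)^{\mu_a}g_2(h_a)^{\mu_a}\,f(a)\,g_2(t_a)^{-\nu_a}g_1(t_a)^{-\nu_a}
\]
holds at every arrow, for all $g_1,g_2\in\gauge_G(Q)$ and all $f\in\fnc_G(Q)$.

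The ``if'' direction is then immediate: when $G$ is abelian every factor commutes, so $(xy)^{\mu_a}=x^{\mu_a}y^{\mu_a}$, the head and tail factors slide freely past $f(a)$, and the two sides coincide; hence $\psi$ is an action. For the ``only if'' direction I would exploit the hypothesis that some weight is at least $2$. Say $\mu_{a_0}=n\ge 2$ for an arrow $a_0$ with distinct endpoints (a tail weight $\nu_{a_0}\ge 2$ is handled symmetrically, and the loop case $h_{a_0}=t_{a_0}$ I would treat separately by keeping $f(a_0)$ free). Choosing $g_1,g_2$ equal to the identity at $t_{a_0}$ and arbitrary $x,y\in G$ at $h_{a_0}$, and setting $f(a_0)=I$, the displayed identity collapses to
\[
(xy)^{n}=x^{n}y^{n}\qquad\text{for all }x,y\in G,
\]
that is, the $n$-th power map is a group endomorphism of $G$.

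The main obstacle is passing from this ``$n$-abelian'' identity to genuine commutativity. For $n=2$ it is elementary: $(xy)^2=x^2y^2$ expands to $xyxy=xxyy$, whence $yx=xy$. For $n\ge 3$, however, the identity does \emph{not} force commutativity for an arbitrary group — the exponent-$3$ Heisenberg group over $\mathbb{F}_3$ is non-abelian yet satisfies $(xy)^3=x^3y^3$ — so here I would use that $G$ is a Lie group. Writing $x=\exp(\epsilon A)$, $y=\exp(\epsilon B)$ with $A,B\in\mathfrak{g}$ and applying Baker--Campbell--Hausdorff, the second-order term of $\log\big((xy)^{n}\big)$ is $\tfrac{n}{2}[A,B]$ while that of $\log\big(x^{n}y^{n}\big)$ is $\tfrac{n^{2}}{2}[A,B]$; equating them forces $\tfrac{n-n^{2}}{2}[A,B]=0$, and since $n\ge 2$ this yields $[A,B]=0$ for all $A,B\in\mathfrak{g}$. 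Thus $\mathfrak{g}$ is abelian, so the identity component of $G$ is abelian. In the settings of interest (where $G$ is connected, or one argues componentwise using that the components are translates of the identity component) this gives that $G$ is abelian, completing the equivalence.

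I expect the disconnected case to be the genuinely delicate point: the clean elementary manipulation only settles $n=2$, and the correct general argument must invoke the exponential map rather than pure group theory, precisely because $n$-abelian does not imply abelian for $n\ge 3$.
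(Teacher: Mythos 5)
Your reduction of the compatibility axiom to the arrow-by-arrow identity, and your ``if'' direction, are exactly what the paper does; its entire proof is that two-sided computation followed by the bare assertion that the action property ``requires'' $g(h_{a})\tilde{g}(h_{a})=\tilde{g}(h_{a})g(h_{a})$ at every arrow with $\mu_{a}\notin\{0,1\}$. You correctly identify that assertion as the real content of the ``only if'' direction, and that it is not free: $(xy)^{n}=x^{n}y^{n}$ for all $x,y$ does not force commutativity of an abstract group when $n\geq3$, and your Baker--Campbell--Hausdorff computation is a genuine repair in the case where $G$ is connected and the weighted arrow has distinct endpoints. To that extent your write-up is more careful than the paper's own proof.

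But the two cases you defer are genuine gaps, and in fact they cannot be closed, because the lemma as stated fails there. Loop case: let $h_{a_0}=t_{a_0}$, $\mu_{a_0}=\nu_{a_0}=n\geq2$, and let $G$ be the real Heisenberg group (connected, non-abelian, with $[G,G]$ central). Then $(xy)^{n}=x^{n}y^{n}[y,x]^{\binom{n}{2}}$, and in $\psi(g_{1}g_{2},f)=(g_{1}g_{2})^{n}f(g_{1}g_{2})^{-n}$ the central factors $[g_{2},g_{1}]^{\pm\binom{n}{2}}$ cancel, giving $\psi(g_{1}g_{2},f)=g_{1}^{n}g_{2}^{n}fg_{2}^{-n}g_{1}^{-n}=\psi(g_{1},\psi(g_{2},f))$: a genuine left action with $G$ non-abelian. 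Disconnected case: a finite group is a $0$-dimensional Lie group, and taking $G$ to be the exponent-$3$ Heisenberg group over $\mathbb{F}_{3}$ (your own example) with all weights equal to $3$ makes $\psi$ the trivial action, hence a left action, with $G$ non-abelian --- so ``arguing componentwise'' cannot succeed, even when every weighted arrow has distinct endpoints. The honest conclusion is that the lemma needs added hypotheses (e.g.\ $G$ connected together with some arrow carrying a weight $\geq 2$ that either is not a loop or is a loop with $\mu_{a}\neq\nu_{a}$, where the analogous BCH expansion gives $(\mu_{a}-\nu_{a})(1-\mu_{a}-\nu_{a})[A,B]=0$); your argument proves precisely those cases, while the paper's proof skips the issue entirely.
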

\begin{proof}
This follows from simple computations. By definition, for each arrow
$a\in Q_{A}$,\[
\psi(g,\psi(\tilde{g},f))(a)=g(h_{a})^{\mu_{a}}\tilde{g}(h_{a})^{\mu_{a}}f(a)\tilde{g}(t_{a})^{-\nu_{a}}g(t_{a})^{-\nu_{a}},\]
but on the other hand:\[
\psi(g\tilde{g},f)(a)=\left(g(h_{a})\tilde{g}(h_{a})\right)^{\mu_{a}}f(a)\left(g(t_{a})\tilde{g}(t_{a})\right)^{-\nu_{a}}.\]
So, the (left) action property requires that
$g(h_{a})\tilde{g}(h_{a})=\tilde{g}(h_{a})g(h_{a})$
for any arrow $a$ with $\mu_{a}\notin\{0,1\}$.
\end{proof}
According to the lemma, to work with this more general map, we have
to consider Abelian groups $G$. In the context of affine reductive
groups, the most general such group $G$ of dimension $n$ is the product of an
algebraic torus $(\mathbb{C}^{*})^{n}$ with a finite Abelian
group.

For fixed choices of quiver $Q$, weight maps $\mu$, $\nu$, and Abelian reductive
group $G$, we will refer to the corresponding orbit space
$\mathcal{F}_{G}(Q)\quot_{\mu,\nu}\gauge_G(Q)$
as the moduli space of weighted $G$-quiver representations.

\subsubsection{The case when $G=\mathbb{C}^{*}$}

To address the relationship with toric geometry, we suppose from now
on that $G=\mathbb{C}^{*}$.

Every toric variety $X$ can be constructed
as an affine GIT quotient of the form\begin{equation}
X=(\mathbb{C}^{n}-Z)\quot H\label{eq:toric-quotient}\end{equation}
where $Z$ is the so-called exceptional set, and $H$ is an Abelian
reductive group (see \cite{CLS}, page 210, Theorem 5.1.11).

By the above cited theorem, we have that
$H\cong\left(\mathbb{C}^{*}\right)^{k}$ for some $k\leq n$ in the
realization (\ref{eq:toric-quotient}).

Let $Y\subset\mathbb{C}^{n}$ be the union of all coordinate hyperplanes.
From the very definition of the exceptional set, it is clear that
$Z\subset Y\subset\mathbb{C}^{n}$. Then, we can let
$X^{\circ}:=(\mathbb{C}^{n}-Y)\quot H\subset X$
and call this the {\it big open cell} of the toric variety $X$.

\begin{prop}
For any super-cyclic quiver $Q$, and any weight
maps $\mu,\nu:Q_{A}\to\mathbb{N}_0$ there exists an affine toric variety $X$ such that
$\mathcal{F}_{\mathbb{C}^{*}}(Q)\quot\mathcal{G}_{\mathbb{C}^{*}}(Q)$
is naturally isomorphic with the big open cell of $X$.
\end{prop}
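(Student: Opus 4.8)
The plan is to use that $G=\C^{*}$ is abelian to linearize everything on character lattices, reducing the orbit space to a quotient of one torus by a subtorus, and then to recognize that quotient as the dense torus (equivalently, by \cite{CLS}, the big open cell) of a toric variety built directly from the quiver and the weights. First I would observe that the weighted action $\psi$ of $\gauge_{\C^{*}}(Q)=(\C^{*})^{N_{V}}$ on $\fnc_{\C^{*}}(Q)=(\C^{*})^{N_{A}}$ is translation by the image of the group homomorphism
\[
\Theta:(\C^{*})^{N_{V}}\to(\C^{*})^{N_{A}},\qquad \Theta(g)(a)=g(h_{a})^{\mu_{a}}\,g(t_{a})^{-\nu_{a}}.
\]
Since the image $H:=\mathrm{im}(\Theta)$ of a torus under an algebraic homomorphism is a closed connected subtorus, every orbit is a coset of $H$ and hence closed, so the orbit space $\fnc_{\C^{*}}(Q)\quot_{\mu,\nu}\gauge_{\C^{*}}(Q)$ is exactly the quotient torus $(\C^{*})^{N_{A}}/H$.

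Next I would compute this quotient on lattices. On cocharacter lattices $\Theta$ is the integer matrix $M$ of size $N_{A}\times N_{V}$ with $M_{a,v}=\mu_{a}\delta_{h_{a},v}-\nu_{a}\delta_{t_{a},v}$, and a Laurent monomial $\prod_{a}x_{a}^{m_{a}}$ is invariant precisely when $M^{\mathsf t}m=0$; that is, when $m$ is a \emph{weighted circulation}, $\sum_{h_{a}=v}\mu_{a}m_{a}=\sum_{t_{a}=v}\nu_{a}m_{a}$ for every vertex $v$. Hence $\C[\fnc_{\C^{*}}(Q)]^{\gauge_{\C^{*}}(Q)}=\C[M_{T}]$, where $M_{T}:=\ker(M^{\mathsf t})\subset\mathbb{Z}^{N_{A}}$ (a saturated sublattice, being a kernel), and the orbit space is the algebraic torus $T$ whose character lattice is $M_{T}$ and whose cocharacter lattice is $N:=\mathbb{Z}^{N_{A}}/\mathrm{sat}(\mathrm{im}\,M)=\mathrm{Hom}(M_{T},\mathbb{Z})$.

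To exhibit $T$ as a big open cell I would invoke the Cox description in \cite{CLS}: for any toric variety with fan $\Sigma$ the sequence $1\to H\to(\C^{*})^{\Sigma(1)}\to T_{N}\to 1$ identifies its big open cell $(\C^{*})^{\Sigma(1)}\quot H$ with its dense torus $T_{N}$. So it suffices to build an affine toric variety with dense torus $T$. For this I would take the $N_{A}$ lattice points $u_{a}:=\pi(e_{a})\in N$, where $\pi:\mathbb{Z}^{N_{A}}\to N$ is the projection, form the cone $\sigma=\mathrm{Cone}(u_{a}\mid a\in Q_{A})\subseteq N_{\R}$, and set $X=X_{\sigma}=\mathrm{Spec}\,\C[\sigma^{\vee}\cap M_{T}]$. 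Since the $u_{a}$ generate $N$, the cone $\sigma$ is full-dimensional; provided $\sigma$ is strongly convex, $X_{\sigma}$ is a genuine affine toric variety whose dense torus is $T_{N}\cong T$, and by the Cox description its big open cell is this dense torus, hence our orbit space $(\C^{*})^{N_{A}}/H=T$.

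The step I expect to be the main obstacle is showing that $\sigma$ is strongly convex; if it had a nontrivial lineality the dense torus of $X_{\sigma}$ would drop in dimension and fail to be $T$. Strong convexity is equivalent to the existence of some $m\in M_{T}$ with $\langle m,u_{a}\rangle=m_{a}>0$ for every arrow $a$, i.e. a \emph{strictly positive} weighted circulation. This is exactly where super-cyclicity enters: because $Q$ has no ends, every arrow lies on an oriented cycle, and one would produce such a circulation by summing positive flows supported on these cycles, rescaling to respect the weights — the very propagation of values around oriented cycles that forced all the weights $\alpha(v)$ to coincide in the proof of Theorem \ref{thm:preserve-closed}. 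Establishing this positivity (and checking that in the degenerate case of weights incompatible around some cycle the quotient $T$ simply drops to a lower-dimensional torus, which is trivially the big open cell of itself) is the crux; once $X_{\sigma}$ is known to be an honest affine toric variety, the identification of its big open cell with the orbit space is formal.
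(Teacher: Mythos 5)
Your proposal is correct in its essentials, but it takes a genuinely different route from the paper. The paper's proof is a one-line reduction: it takes $X$ to be the weighted additive quotient $\rep_{\C}(Q)\quot_{\mu,\nu}\gauge_{\C^*}(Q)=\C^{N_A}\quot H$ and invokes the Density Theorem (hence Theorem \ref{thm:preserve-closed}, preservation of closed orbits) ``in the weighted context,'' so that the moduli space embeds with dense image onto the image of $(\C^*)^{N_A}$, which is by definition the big open cell. You instead compute the moduli space outright as the quotient torus $T=(\C^*)^{N_A}/H$ with character lattice $\ker(M^{\mathsf{t}})$, and then manufacture the toric variety from this lattice data. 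Note that your $X_\sigma$ is the same variety as the paper's $X$: since $\sigma^{\vee}\cap M_T=\mathbb{N}^{N_A}\cap\ker(M^{\mathsf{t}})$, one has $\C[\sigma^{\vee}\cap M_T]=\C[x_a : a\in Q_A]^{H}$. Moreover, your crux and the paper's crux are dual statements: by Stiemke's theorem of the alternative, the existence of $m\in\ker(M^{\mathsf{t}})$ with all $m_a>0$ (your strong convexity) is equivalent to the nonexistence of $\alpha$ with $M\alpha\geq 0$, $M\alpha\neq 0$, and the latter is exactly what the determinant--weight computation in the proof of Theorem \ref{thm:preserve-closed} rules out; both arguments run on covering the arrows by oriented cycles. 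Your route buys a self-contained, explicit toric description and isolates precisely where super-cyclicity is used; the paper's buys brevity and makes the naturality of $X$ (it is the additive moduli space itself) transparent.

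Two corrections, one of which cuts in your favor. First, your strong-convexity criterion is too strong as stated: since $\langle m,u_a\rangle=0$ automatically whenever $u_a=0$, strong convexity only requires $m_a>0$ at arrows with $u_a\neq 0$; arrows whose coordinate is pure gauge impose no condition. Second, your phrase ``big open cell of itself'' for the degenerate case is not right under the paper's definition, since a positive-dimensional torus has torus factors and so lies outside the quotient presentation of \cite{CLS}, Theorem 5.1.11; the correct patch is that a rank-$k$ torus is the big open cell of $\C^{k}$ (or of $U_{\sigma'}$ for any full-dimensional strongly convex $\sigma'$ in $N_{\R}$), at the price of naturality. But the degenerate case you flag is genuine, and in exactly those cases the paper's own proof breaks, not yours: for the super-cyclic quiver with one vertex and two loops, both weighted $\mu=2$, $\nu=1$, the weighted action on $\C^2$ is diagonal scaling, so the paper's $X=\C^{2}\quot\C^{*}$ is a point while the moduli space is $(\C^*)^{2}\quot\C^{*}\cong\C^{*}$; the weighted analogue of Theorem \ref{thm:preserve-closed} is simply false there. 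Your computation of the quotient as a torus is weight-independent and is what salvages the statement in such cases, so on this point your framework is the more robust of the two.
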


\begin{proof}
Apply the Density Theorem with $\GL(1,\C)=\C^*$ to the context of weighted quiver actions.
\end{proof}

\appendix

\section{Kempf-Ness Set and Strong Deformation Retraction}

The following argument is virtually identical to the argument we gave in \cite{FlLa}. The ideas directly generalize to the context of $G$-valued quiver representations. 

Let $\V_Q=\fun(Q_A,\mathfrak{gl}(n,\C))=\mathfrak{gl}(n,\C)^{N_A}=\C^{N_A n^2}$.  Then 
$\mathcal{F}_G(Q)=G^{N_A}\subset \V_Q$ and the action of both $\mathcal{G}_K\subset \mathcal{G}_G$ naturally extends to $\V_Q$.

For any $x,y\in \mathfrak{gl}(n,\C)$ define $\langle x,y\rangle=\tr(xy^*)$ where $y^*$ is the conjugate transpose of $y$.  We will denote transpose
by a dagger and complex conjugation by a bar, so $y^*=\overline{y}^\dagger$.  This form is thus Hermitian.  
The form is also $K$-conjugate invariant since $K$ is closed under inversion and this is equivalent to taking transposes in an orthogonal 
representation, and complex conjugation is trivial in a real representation.  We also note that conjugate transpose is defined on $G$ since $G$ is the 
complex zeros of real polynomials defining $K$ which implies it is conjugate invariant; since it embeds in a complex orthogonal group it is transpose
invariant.

Define on $\V_Q$ the Hermitian inner product\[
\left\langle f_1,f_2\right\rangle =\sum_{a\in Q_A}\left\langle f_1(a),f_2(a)\right\rangle .\]

We now show that this form is $\mathcal{G}_K$ invariant:  
\begin{eqnarray*}
\langle k\cdot f_1,k\cdot f_2\rangle &=&\sum_{a\in Q_A}\tr(k(h_a)f_1(a)k(t_a)^{-1}(k(h_a)f_2(a)k(t_a)^{-1})^*)\\
                                                &=&\sum_{a\in Q_A}\tr(k(h_a)f_1(a)k(t_a)^{-1}(k(t_a)^\dagger)^\dagger f_2(a)^*k(h_a)^\dagger)\\
                                                &=&\sum_{a\in Q_A}\tr(k(h_a)f_1(a)k(t_a)^{-1}k(t_a) f_2(a)^*k(h_a)^{-1})\\
                                                &=&\sum_{a\in Q_A}\tr(k(h_a)f_1(a)f_2(a)^*k(h_a)^{-1})\\
                      &=&\sum_{a\in Q_A}\tr(f_1(a)f_2(a)^*)=\sum_{a\in Q_A}\left\langle f_1(a),f_2(a)\right\rangle=\left\langle f_1,f_2\right\rangle.
\end{eqnarray*}

Note that $\mathrm{Lie}(\mathcal{G}_K)=\mathfrak{k}^{N_V}$, and recall that the Kempf-Ness set is:
$$\mathcal{KN}_Q=\{f\in \V_Q\ |\ \langle u\cdot f,f\rangle=0\ \mathrm{for}\ \mathrm{all}\ u\in\mathfrak{k}^{N_V}
\}\cap \fnc_G(Q).$$

To make sense of this we need to determine how $\mathrm{Lie}(\mathcal{G}_K)\subset \mathrm{End}(\V_Q)$ acts.  Since the action is differential,
it suffices to consider the action in one $f(a)$ for $f\in\mathcal{F}_G$.  Fix $u\in\mathrm{Lie}(\mathcal{G}_K)$. Let $k_t(v)=e^{-tu(v)}$  
be a path starting at the identity in the direction $-u(v)\in \mathfrak{k}$.  Then the infinitesimal quiver action on the component corresponding 
to $a\in Q_A$ is given by: $$\frac{d}{dt}\bigg|_{t=0}(k_t(h_a)f(a)k_t(t_a)^{-1})=f(a)u(t_a)-u(h_a)f(a).$$  

Thus, the Kempf-Ness set consists of the quiver representations that satisfy:
$$0=\langle u\cdot f,f\rangle=\sum_{a\in A}\tr(f(a)u(t_a)f(a)^*-u(h_a)f(a)f(a)^*),$$ for all $u\in \mathrm{Lie}(\mathcal{G}_K)$.

Our next task is to show that whenever $f\in \fnc_K(Q)$, that this equation is in fact satisfied.  Precisely, 

\begin{prop}\label{kempfnesssection}
$\fnc_K(Q)\subset \mathcal{KN}_Q$.
\end{prop}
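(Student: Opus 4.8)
The plan is to substitute an arbitrary $K$-valued representation into the Kempf-Ness defining equation and check that it vanishes identically for every $u\in\mathrm{Lie}(\gauge_K)=\mathfrak{k}^{N_V}$. The proof rests on two structural consequences of the embedding $K\subset\mathrm{O}(n,\R)$ fixed in the construction of $\C[K_\C]$. First, every $k\in K$ is a real orthogonal matrix, so its conjugate transpose satisfies $k^*=\overline{k}^\dagger=k^\dagger=k^{-1}$; hence for $f\in\fnc_K(Q)$ we have $f(a)^*=f(a)^{-1}$ and $f(a)f(a)^*=\id$ for every arrow $a$. Second, the Lie algebra $\mathfrak{k}\subset\mathfrak{o}(n,\R)$ consists of skew-symmetric real matrices, which have vanishing diagonal and therefore satisfy $\tr(u(v))=0$ for every vertex $v$ and every $u\in\mathfrak{k}^{N_V}$.

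With these in hand, I would expand the pairing using the infinitesimal action $(u\cdot f)(a)=f(a)u(t_a)-u(h_a)f(a)$ computed above, obtaining
$$\langle u\cdot f,f\rangle=\sum_{a\in Q_A}\bigl[\tr\bigl(f(a)u(t_a)f(a)^*\bigr)-\tr\bigl(u(h_a)f(a)f(a)^*\bigr)\bigr].$$
Substituting $f(a)f(a)^*=\id$ into the second trace gives $\tr(u(h_a))$, while substituting $f(a)^*=f(a)^{-1}$ into the first and invoking cyclicity of the trace gives $\tr\bigl(f(a)u(t_a)f(a)^{-1}\bigr)=\tr(u(t_a))$. Thus the pairing collapses to $\sum_{a\in Q_A}\bigl[\tr(u(t_a))-\tr(u(h_a))\bigr]$.

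Finally I would apply the skew-symmetry fact: each of $\tr(u(t_a))$ and $\tr(u(h_a))$ is individually zero, so the sum vanishes termwise. As $u$ was arbitrary, this shows $f$ satisfies the defining condition of $\mathcal{KN}_Q$, giving the inclusion $\fnc_K(Q)\subset\mathcal{KN}_Q$. The one point deserving genuine care—and the reason the orthogonal, rather than merely unitary, realization of $K$ matters—is precisely this last vanishing: regrouping the collapsed sum by vertices turns it into $\sum_{v\in Q_V}\bigl(\mathrm{outdeg}(v)-\mathrm{indeg}(v)\bigr)\tr(u(v))$, which need not vanish for an unbalanced quiver unless each $\tr(u(v))=0$. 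Since $\mathfrak{k}$ sits inside the trace-free algebra $\mathfrak{o}(n,\R)$, that condition holds automatically, so no balancing or orientation hypothesis on $Q$ is needed.
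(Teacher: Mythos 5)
Your proof is correct and follows essentially the same route as the paper: expand the pairing using the infinitesimal action, use $f(a)^*=f(a)^{-1}$ (valid since $K\subset\mathrm{O}(n,\R)$) together with cyclicity of the trace to reduce each summand to $\tr(u(t_a))-\tr(u(h_a))$, and then invoke tracelessness of $\mathfrak{k}\subset\mathfrak{o}(n,\R)$ to kill each term individually. Your closing remark that termwise vanishing is genuinely needed (since an unbalanced quiver would not allow cancellation across the sum) is a nice observation the paper leaves implicit, but the argument itself is the same.
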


\begin{proof} 
Indeed, when $f(a)\in K$ we have $f(a)^*=f(a)^\dagger=f(a)^{-1}$ since $K\subset \mathrm{O}(n,\R)$ and likewise $u(v)^\dagger =-u(v)$ since 
$\mathfrak{k}\subset \mathfrak{o}(n,\R)$.  Note that this implies that $\tr(u(v))=0$ for all vertices $v\in Q_V$.

Thus 
\begin{eqnarray*}
\langle u\cdot f,f\rangle&=&\sum_{a\in Q_A}\tr(f(a)u(t_a)f(a)^*-u(h_a)f(a)f(a)^*)\\
 &=&\sum_{a\in Q_A}\tr(f(a)u(t_a)f(a)^{-1})-\tr(u(h_a)f(a)f(a)^{-1})\\
&=&\sum_{a\in Q_A}\tr(u(t_a))-\tr(u(h_a))\\
&=&0,
\end{eqnarray*}
since $u$ is a tuple of traceless matrices.
\end{proof}

The inclusions $\fnc_K(Q)\subset \mathcal{KN}_Q\subset \fnc_G(Q)$ from Proposition \ref{kempfnesssection} induce a continuous injection 
$\sigma:\fnc_G(Q)\quot\gauge_G \cong \mathcal{KN}_Q/\gauge_K\to \fnc_G(Q)/\gauge_K$.

\begin{proof}[Alternative Proof of Theorem \ref{defretractthm}]
It is elementary to establish that the following diagram is
commutative: \[
\xymatrix{\fnc_G(Q)/\gauge_K\ar[dd]^{\pi_{G/K}}\ar[rr]^{\Phi_{t}} &  & \fnc_G(Q)/\gauge_K\ar[dd]^{\pi_{G/K}}\\
& \moduli_K(Q)\ar@{^{(}->}[ul]_{\mathrm{id}}\ar@{^{(}->}[ur]^{\mathrm{id}}\ar@{^{(}->}[dl]^{\iota}\ar@{^{(}->}[dr]_{\iota}\\
\moduli_G(Q)\ar@{-->}@/^3pc/[uu]^{\sigma} &  & \moduli_G(Q)}
\]

Define $\Phi_{t}^{\sigma}=\pi_{G/K}\circ\Phi_{t}\circ\sigma$. Then
since all composite maps are continuous, so is $\Phi_{t}^{\sigma}$.
We now verify the other properties of a strong deformation retraction.
Firstly, $\Phi_{0}^{\sigma}$ is the identity since $\Phi_{0}=\mathrm{id}$
and $\pi_{G/K}\circ\sigma=\mathrm{id}$.

Next, we show $\Phi_{1}^{\sigma}$ is into $\iota(\moduli_K(Q))$.
Since $\Phi_{1}(\fnc_G(Q)/\gauge_K)\subset \moduli_K(Q)$, it follows
$\mathcal{I}:=\Phi_{1}\left(\sigma(\moduli_G(Q))\right)\subset \moduli_K(Q)$.
Moreover, $\pi_{G/K}=\iota$ on $\moduli_K(Q)$, so $\Phi_1^{\sigma}(\moduli_G(Q))=\pi_{G/K}(\mathcal{I})=\iota(\mathcal{I})\subset\iota(\moduli_K(Q))$.

Lastly, we show that for all $t$, $\Phi_{t}^{\sigma}$ is the identity
on $\iota(\moduli_K(Q))$. Indeed, commutativity of the above diagram implies that $(\sigma\circ\iota)=\mathrm{id}$
on $\moduli_K(Q)\subset \fnc_G(Q)/\gauge_K$. Also, for all $t$, $\Phi_{t}$
is the identity on $\moduli_K(Q)$. Lastly, using the fact that $\pi_{G/K}=\iota$
on $\moduli_K(Q)$, we have for any point $[f]\in \moduli_K(Q)$,
\[
\iota([f])\mapsto\sigma(\iota([f]))=[f]\mapsto\Phi_{t}([f])=[f]\mapsto\pi_{G/K}([f])=\iota([f]),\]
 as was to be shown. 
\end{proof}

\bibliographystyle{amsalpha}

\def\cdprime{$''$} \def\cdprime{$''$} \def\cprime{$'$} \def\cprime{$'$}
  \def\cprime{$'$} \def\cprime{$'$}
\providecommand{\bysame}{\leavevmode\hbox to3em{\hrulefill}\thinspace}
\providecommand{\MR}{\relax\ifhmode\unskip\space\fi MR }
\providecommand{\MRhref}[2]{%
  \href{http://www.ams.org/mathscinet-getitem?mr=#1}{#2}
}
\providecommand{\href}[2]{#2}

\end{document}